\newtheorem{thm}{Theorem}[section]
\newtheorem{theorem}[thm]{Theorem}
\newtheorem{cor}[thm]{Corollary}
\newtheorem{lem}[thm]{Lemma}
\newtheorem{lemma}[thm]{Lemma}
\newtheorem{prop}[thm]{Proposition}
\theoremstyle{definition}
\newtheorem{definition}[thm]{Definition}
\newtheorem{ex}[thm]{Example}
\newtheorem{rem}[thm]{Remark}
\newtheorem{remark}[thm]{Remark}
\numberwithin{equation}{section}
\newcommand{\rs}{{\stackrel{\rm r}{\rightharpoonup}}} 
\newcommand{\ls}{{\stackrel{\rm l}{\rightharpoonup}}} 
\newcommand{\lrs}{{\stackrel{\rm lr}{\rightharpoonup}}} 
\newcommand{\rh}{\rightharpoonup}
\newcommand{\marginal}[1]{\marginpar{\tiny #1}}
\begin{document}

\title[Agol cycles of pseudo-Anosov maps]
{Agol cycles of pseudo-Anosov maps on the 2-punctured torus and 5-punctured sphere}

\author{Jean-Baptiste Bellynck}
\address {Department of Mathematics, Ludwig-Maximilians-University Munich, 80333 Munich, Theresienstr. 39, Germany}
\email{j.bellynck@campus.lmu.de}

\author{Eiko Kin} 
\address {Center for Education in Liberal Arts and Sciences, Osaka University, Toyonaka, Osaka 560-0043, Japan}
\email{kin.eiko.celas@osaka-u.ac.jp}

\keywords{%
pseudo-Anosov,  periodic splitting sequences, Agol cycle length}

\date{\today}

\begin{abstract}
Given a periodic splitting sequence of a measured train track, an Agol cycle is the part that constitutes a period up to the action of a pseudo-Anosov map and the rescaling by its dilatation. We consider a family of pseudo-Anosov maps on the $2$-punctured torus and on the $5$-punctured sphere. 
We present measured train tracks and compute their Agol cycles. We give a condition under which two maps in the defined family are conjugate or not. In the process, we find a new formula for the dilatation.   
\end{abstract}

\maketitle

\section{Introduction}
\label{section_Introduction}
Let $\Sigma = \Sigma_{g, n}$ be an orientable surface with genus $g $ and $n $ punctures. 
Let $\mathrm{MCG}(\Sigma)$ be the mapping class group of $\Sigma$. 
According to the Nielsen-Thurston-classification, 
every element of $\mathrm{MCG}(\Sigma)$ falls into $3$ types: periodic, reducible and pseudo-Anosov. 
If $\phi \colon \Sigma \to \Sigma$ is a pseudo-Anosov map, then there exist associated stable and unstable measured laminations 
$(\mathcal{L}^s, \nu^s)$ and $(\mathcal{L}^u, \nu^u)$ and the dilatation $\lambda = \lambda(\phi) >1$ such that 
$$\phi(\mathcal{L}^s, \nu^s)= (\mathcal{L}^s, \lambda \nu^s) \hspace{2mm} \mbox{and} \hspace{2mm} \phi(\mathcal{L}^u, \nu^u)= (\mathcal{L}^u, \lambda^{-1} \nu^u).$$

A {\em measured train track} $(\tau, \mu)$ is a train track $\tau$ with a  transverse measure $\mu$. 
Edges of a train track are called {\em branches} and vertices are called {\em switches}. 
A branch that locally looks like the central branch in Figure \ref{fig_split_shift}(1) is called a {\em large branch}.  
A {\em splitting} at a large branch 
is an operation that gives a new measured train track. 
There are two kinds of splitting, {\em left} and {\em right splitting} at a large branch (Figure~\ref{fig_split_shift}(2)(3)). 
See Definition~\ref{def_operation}.

A {\em maximal splitting} $(\tau_0, \mu_0) \rightharpoonup (\tau_1, \mu_1)$ is an operation on the measured train track $(\tau_0, \mu_0)$ 
that splits all the large branches that carry  maximal $\mu_0$-weight 
and $(\tau_1, \mu_1)$ is the resulting measured train track.
If all the splittings in a maximal splitting are left (resp. right) splittings, the maximal splitting is denoted by $\ls$ (resp. $\rs$) 
and called a {\em left (resp. right) maximal splitting}.

\begin{figure}[ht]
\centering
\includegraphics[height=2cm]{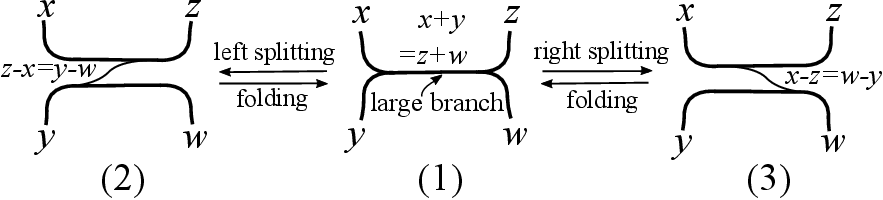}
\caption{
(1) A large branch. 
(2) Left splitting when $z>x$ ($\Leftrightarrow y>w$), (3) right splitting  when $x>z$ ($\Leftrightarrow w>y$) at the large branch. 
} 
\label{fig_split_shift}
\end{figure}

It was proven by Agol that after enough maximal splittings 
the measured train track $(\tau, \mu)$ suited to the stable measured lamination of a pseudo-Anosov map $\phi$ 
will have changed to $\phi(\tau, \lambda^{-1}\mu)\colonequals   (\phi(\tau), \lambda^{-1}\phi_*(\mu))$, 
where the measure $\phi_*(\mu)$ is defined by $\phi_*(\mu)(e)\colonequals  \mu(\phi^{-1}(e))$ for a branch $e$ in the train track $\phi(\tau)$. 
To state Agol's result precisely, 
a sequence of consecutive $n$ maximal splittings
$(\tau_0, \mu_0) \rightharpoonup  \cdots \rightharpoonup (\tau_n, \mu_n) $ 
is denoted by $(\tau_0,\mu_0) \rightharpoonup^n (\tau_n, \mu_n)$.

\begin{theorem}[Agol \cite{Agol11}. See also Agol-Tsang \cite{AgolTsang22}]
\label{thm:Agol}
Let $\phi \colon \Sigma \to \Sigma$ be a pseudo-Anosov map with dilatation $\lambda$. 
Let $(\tau, \mu)$ be a measured train track 
suited to the stable measured lamination of $\phi$. 
Then   there exist  $n\geq 0$ and $m >0$ such that 
$$(\tau, \mu)  \rightharpoonup^n (\tau_n, \mu_n) \rightharpoonup^m (\tau_{n+m}, \mu_{n+m})= \phi(\tau_n, \lambda^{-1}\mu_n).$$ 
\end{theorem}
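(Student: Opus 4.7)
The plan is to extract eventual periodicity of the maximal splitting sequence modulo the natural action of $\phi$ combined with the rescaling $\mu \mapsto \lambda^{-1}\mu$, by combining a finiteness principle with a pigeonhole argument.

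First I would verify that every measured train track $(\tau_i, \mu_i)$ appearing in the sequence is again suited to the stable measured lamination $(\mathcal{L}^s, \nu^s)$. Since maximal splitting is a local combinatorial operation that preserves the carried lamination together with its transverse measure, this step is essentially formal. Next I would check that the assignment
\[
T \colon (\tau, \mu) \longmapsto (\phi(\tau), \lambda^{-1}\phi_*(\mu))
\]
defines a bijection on the set $\mathcal{T}$ of measured train tracks suited to $(\mathcal{L}^s, \nu^s)$, using that $\phi(\mathcal{L}^s, \nu^s) = (\mathcal{L}^s, \lambda\nu^s)$ and that the prefactor $\lambda^{-1}$ exactly compensates. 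Crucially, $T$ commutes with maximal splitting: the heaviest branches of $T(\tau, \mu)$ are the $\phi$-images of the heaviest branches of $(\tau, \mu)$, since the rescaling by $\lambda^{-1}$ does not affect which branches carry maximal weight.

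With these preliminaries in place, the heart of the argument is a finiteness claim: the isotopy classes of train tracks $\tau_i$ appearing in the splitting sequence, taken modulo the cyclic group generated by $\phi$, form a finite set. Once this is established, pigeonhole produces indices $n < n+m$ for which $\tau_n$ and $\tau_{n+m}$ lie in the same $\phi$-orbit, and comparing transverse measures (using that the total $\nu^s$-weight is scaled by a definite factor at each step) then forces $(\tau_{n+m}, \mu_{n+m}) = T(\tau_n, \mu_n) = \phi(\tau_n, \lambda^{-1}\mu_n)$, which is precisely the desired conclusion.

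The main obstacle will be the finiteness step. The set of all train tracks on $\Sigma$ carrying a given lamination is generically infinite, so one must exploit that the trajectory stays inside a bounded-complexity stratum --- each maximal splitting preserves the numbers of branches and switches --- and combine this with the classical fact that there are only finitely many isotopy classes of train tracks of bounded complexity on a fixed surface, in order to descend to a finite quotient under the $\phi$-action. This is precisely where Agol's periodic veering triangulation of the mapping torus of $\phi$ (or the Agol--Tsang reformulation) enters the original argument, providing the canonical periodic invariant against which the splitting sequence is matched.
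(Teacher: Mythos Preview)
The paper does not contain a proof of this theorem. Theorem~\ref{thm:Agol} is stated as a background result and attributed to Agol~\cite{Agol11} (see also Agol--Tsang~\cite{AgolTsang22}); the paper invokes it but never supplies an argument. So there is no ``paper's own proof'' to compare your proposal against.

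As for the proposal itself: the overall architecture is the right one --- the commutation of $T$ with maximal splitting, together with a finiteness statement modulo the $T$-action, is indeed how eventual periodicity is obtained. But two steps are not yet arguments. First, your pigeonhole conclusion is too quick: from $\tau_{n+m}$ lying in the $\phi$-orbit of $\tau_n$ you only get $\tau_{n+m} = \phi^k(\tau_n)$ for some $k \in \mathbb{Z}$, and the measure comparison then gives $\mu_{n+m} = \lambda^{-k}\phi^k_*(\mu_n)$; you still owe an explanation of why one can arrange $k=1$ (this is where one uses that $T$ commutes with splitting to replace the original sequence by a shifted one, or equivalently passes to the quotient sequence). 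The phrase ``total $\nu^s$-weight is scaled by a definite factor at each step'' is not correct as stated --- a single maximal splitting does not rescale the measure globally. Second, as you acknowledge, the finiteness step is the real content, and invoking ``finitely many isotopy classes of bounded-complexity train tracks'' gives finiteness only modulo the full mapping class group, not modulo the cyclic group $\langle \phi \rangle$; bridging that gap is exactly what requires the substantive input from Agol's argument.
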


For the terminology {\em suited to}, see Definition~\ref{definition:suited}. 
We call the maximal splitting sequence 
$$ (\tau_n, \mu_n) \rightharpoonup^m (\tau_{n+m}, \mu_{n+m})  \rightharpoonup^m (\tau_{n+2m}, \mu_{n+2m}) \rightharpoonup^m \cdots$$
a {\em periodic splitting sequence} of $\phi$.  We  call the finite subsequence 
$(\tau_n, \mu_n) \rightharpoonup^m  (\tau_{n+m}, \mu_{n+m})$ 
an {\em Agol cycle} of $\phi$ and call $m$ the {\em Agol cycle length} of $\phi$, denoted by $\ell(\phi)$. 
The {\em total splitting number} of an Agol cycle  of $\phi$, 
denoted by $N(\phi)$, 
is the number of large branches that are split in the Agol cycle (Definition~\ref{definition_splitting-number}(3)).

An {\em equivalence class} of an Agol cycle  is a conjugacy invariant of pseudo-Anosov maps (Section~\ref{subsection_traintrack}). 
The Agol cycle length $\ell(\phi)$ and total splitting number $N(\phi)$ are conjugacy invariants as well.  
If  $\phi: \Sigma \rightarrow \Sigma$ is  fully-punctured 
(i.e., the singularities of the stable/unstable foliations of $\phi$ lie on the punctures of $\Sigma$), 
$N(\phi)$ equals the number of ideal tetrahedra 
in the  veering triangulation of the mapping torus of $\phi$. 
See \cite{Agol11} for more details.

It is natural to ask how the Agol cycle length $\ell(\phi)$ and total splitting number $N(\phi)$  relate to other invariants of pseudo-Anosov maps. 
In  \cite{KawamuroKin23} it was proven that 
for every pseudo-Anosov $3$-braid $\beta$, 
its Agol cycle length, the Garside canonical length of any element in the super summit set  of $\beta$ are the same. 
Agol-Tsang \cite{AgolTsang22} proved that the total splitting number $N(\phi)$ for a fully punctured pseudo-Anosov $\phi: \Sigma \rightarrow \Sigma$ 
 is bounded from above by a constant depending on the normalized dilatation $\lambda^{-\chi (\Sigma)}$, 
where $\chi(\Sigma)$ is the Euler characteristic of  $\Sigma$.

The main goal of this paper is to give an explicit description of an Agol cycle of every pseudo-Anosov map in the two semigroups 
$F_T \subset \mathrm{MCG}(\Sigma_{1,2})$ and $F_D \subset \mathrm{MCG}(\Sigma_{0,5})$ which will be defined below. 
On the $2$-punctured torus $\Sigma_{1, 2}$, let $\delta_i$ be the right-handed Dehn twist about the simple closed curve $c_i \subset \Sigma_{1, 2}$ 
for $i \in \{1,2,3\}$ shown in Figure~\ref{fig_t_track}(1). 
The hyperelliptic involution exchanges the two punctures of the torus and induces a $2$-fold branched cover $\Sigma_{1, 2} \to \Sigma_{0, 5}$ of the $5$-punctured sphere. 
Then  $\delta_i$ descends to $\sigma_i$, the positive half-twist about the segment $\alpha_{i}$ connecting the punctures $i$ and $i+1$ (Figure~\ref{fig_t_track}(5)). 

 \begin{figure}[htbp]
    \centering
    \includegraphics[height=3.5cm]{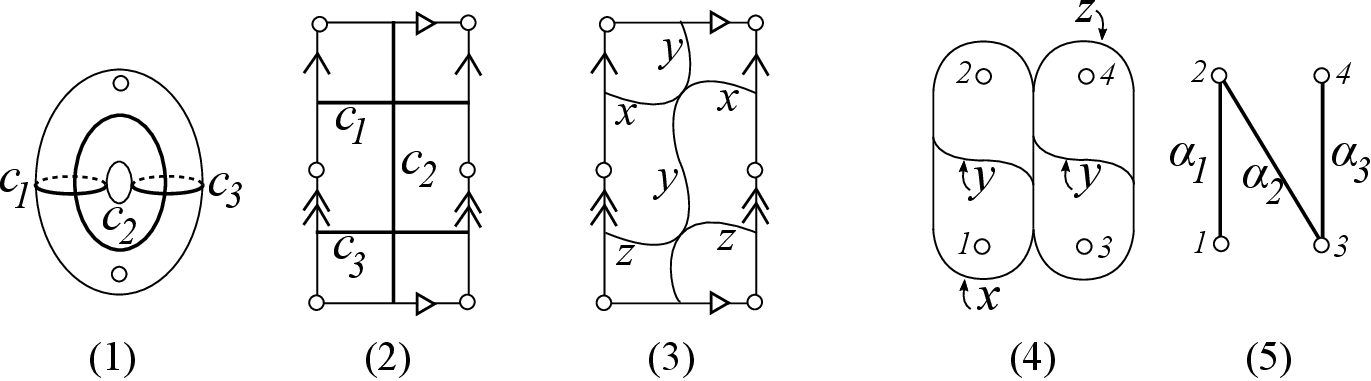}
    \caption{(1)(2) Simple closed curves $c_1, c_2$ and $c_3$ in $\Sigma_{1,2}$. 
    (3) $(\mathfrak{b}, \bm{x})$ in $\Sigma_ {1,2}$ and 
    (4) $(\mathfrak{b}_L, \bm{x})$  in $\Sigma_{0,5}$ 
    for  $\bm{x}= \left(\begin{smallmatrix} x \\ y \\ z \end{smallmatrix}\right)$. 
    (5)  Segments $\alpha_i$ in $\Sigma_ {0,5}$.}
    \label{fig_t_track}
    \end{figure}
    We study pseudo-Anosov maps 
    in the semigroups
    $$F_T \colonequals  F(\delta_1, \delta_3, \delta_2^{-1}) \subset \text{MCG}(\Sigma_{1, 2}) \text{ and } F_D \colonequals  F(\sigma_1, \sigma_3, \sigma_2^{-1})\subset \text{MCG}(\Sigma_{0, 5})$$
    generated by $\delta_1$, $\delta_3$ and $\delta_2^{-1}$  and 
    by $\sigma_1$, $\sigma_3$ and $\sigma_2^{-1}$. 
 Each $\sigma_i$ for $i \in \{1,2,3\}$ fixes the fifth puncture of $\Sigma_{0,5}$. 
 Hence, one can regard an element of $F_D$ as a mapping class on the $4$-punctured disk.  
    The subset $\mathcal{I}_n \subset \mathbb{N}_0^{3n}$, 
    where $ \mathbb{N}_0 = {\mathbb N} \cup \{0\}$, will be useful for our study of pseudo-Anosov maps 
    in $F_T$ and $F_D$ (Definition~\ref{definition_In}). 
    For each $\boldsymbol{p} = (p_n, p_n', q_n, ..., p_1, p_1', q_1)\in \mathcal{I}_n$ 
    $$\Phi_{\boldsymbol{p}} \colonequals  \delta_1^{p_n}\delta_3^{p_n'}\delta_2^{-q_n} \cdots \delta_1^{p_1}\delta_3^{p_1'}\delta_2^{-q_1} \in F_T \text{ and } 
    \phi_{\boldsymbol{p}} \colonequals   \sigma_1^{p_n}\sigma_3^{p_n'}\sigma_2^{-q_n} \cdots \sigma_1^{p_1}\sigma_3^{p_1'}\sigma_2^{-q_1} \in F_D$$
    are pseudo-Anosov maps. 
We take  matrices 
$M_1 = \left(\begin{smallmatrix}1 & 1 & 0 \\ 0 & 1 & 0 \\ 0 & 0 & 1\end{smallmatrix}\right)$, $M_3 = \left(\begin{smallmatrix}1 & 0 & 0 \\ 0 & 1 & 0 \\ 0 & 1 & 1\end{smallmatrix}\right)$, $M_2 = \left(\begin{smallmatrix}1 & 0 & 0 \\ 1 & 1 & 1 \\ 0 & 0 & 1\end{smallmatrix}\right)$. 
For each $\boldsymbol{p} \in \mathcal{I}_n$ 
the matrix 
$$M_{\boldsymbol{p}} \colonequals  M_1^{p_n}M_3^{p_n'}M_2^{q_n} \cdots M_1^{p_1}M_3^{p_1'}M_2^{q_1}$$ 
is Perron-Frobenius. 
    The Perron-Frobenius eigenvalue $\lambda_{\boldsymbol{p}}$ is equal to 
    the dilatations of  maps $\Phi_{\boldsymbol{p}}$ and $\phi_{\boldsymbol{p}}$. 
In Theorem~\ref{thm_expansion} we present an explicit description of the Perron-Frobenius eigenvalue 
    $\lambda_{\boldsymbol{p}}$ and the normalized eigenvector $\bm{v}_{\bm{p}}$. 
    As a consequence 
    we see that 
    $\lambda_{\bm{p}}$ is a quadratic irrational  (Remark~\ref{rem_quadratic-irrational}).     
 Let $\mathfrak{b} \subset \Sigma_{1, 2}$ (resp. $(\mathfrak{b}_L \in \Sigma_{0,5}$) 
 be train track as in Figure \ref{fig_t_track}(3) (resp. Figure \ref{fig_t_track}(4)). 
 Assigning the coefficients of a Perron-Frobenius  eigenvector $\boldsymbol{x}$ of $M_{\boldsymbol{p}} $  
    to the branches of the train track makes the measured train track $(\mathfrak{b}, \boldsymbol{x})$ 
    (resp. $(\mathfrak{b}_L, \boldsymbol{x})$). 

We say that $\bm{p} \in  \mathcal{I}_n$ is {\em symmetric} if $p_i= p_i' $ for all $i \in \{1, \dots, n\}$. 
    Otherwise,  $\bm{p}$ is {\em asymmetric}.  To state our results, 
we use the symbol $\ls^n$ (resp. $\rs^n$) for $n$ consecutive left (resp. right) maximal splittings.

\begin{theorem}
\label{thm_2-torus}
For $ \bm{p}=  (p_n, p_n', q_n, \dots, p_1, p_1', q_1)  \in  \mathcal{I}_n$ 
let 
$\Phi_{\bm{p}} \in F_T$ be the pseudo-Anosov map and 
$M_{\bm{p}}$ be the Perron-Frobenius matrix associated with $ \bm{p}$. 
Let ${\bm v} >\bm{0}$ be an eigenvector with respect to the Perron-Frobenius eigenvalue $\lambda_{\bm{p}}$ of  $M_{\bm{p}}$. 
Then  the Agol cycle length $\ell$ of $\Phi_{\bm{p}}$ is 
$$  \ell= 
\left\{
\begin{array}{ll}
 \sum_{i=1}^n (p_i + 2  q_i) 
& \mbox{if}\ \bm{p}\ \mbox{is symmetric}, 
\\
 \sum_{i=1}^n (p_i+ p_i' + 3  q_i)
& \mbox{if}\ \bm{p}\ \mbox{is asymmetric}. 
\end{array}
\right.
$$
Moreover, starting with the measured train track $(\tau_0, \mu_0) = (\mathfrak{b}, \lambda_{\bm{p}}\bm{v})$, 
a finite subsequence of the maximal splitting sequence 
\begin{eqnarray*}
 (\tau_0, \mu_0) \rs^{p_n} \ls^{2q_n}  \cdots \rs^{p_1} \ls^{2q_1} (\tau_{\ell}, \mu_{\ell}) \hspace{1cm}
& \mbox{if}& \hspace{-2mm} \bm{p}\ \mbox{is symmetric}, 
\\
(\tau_0, \mu_0) \rs^{p_n+p_n'} \ls^{3q_n}  \cdots \rs^{p_1+p_1'} \ls^{3q_1} (\tau_{\ell}, \mu_{\ell})
& \mbox{if}& \hspace{-2mm} \bm{p}\ \mbox{is asymmetric}
\end{eqnarray*}
forms an Agol cycle of $\Phi_{\bm{p}}$. 
\end{theorem}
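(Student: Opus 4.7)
The plan is to trace the maximal splitting sequence starting from $(\tau_0, \mu_0) = (\mathfrak{b}, \lambda_{\bm{p}}\bm{v})$ and show that, after exactly $\ell$ splittings of the prescribed types, the resulting measured train track equals $\Phi_{\bm{p}}(\mathfrak{b}, \bm{v})$. Because $\mathfrak{b}$ is $\Phi_{\bm{p}}$-invariant with transition matrix $M_{\bm{p}}$ and $\bm{v}$ is a Perron eigenvector, this target coincides with $(\mathfrak{b}, M_{\bm{p}}\bm{v}) = (\mathfrak{b}, \lambda_{\bm{p}}\bm{v}) = (\tau_0, \mu_0)$. Theorem~\ref{thm:Agol} then identifies the listed subsequence as an Agol cycle of $\Phi_{\bm{p}}$ and $\ell$ as the Agol cycle length.

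The first step is a local analysis of maximal splittings on measured train tracks of the form $(\mathfrak{b}, \bm{x})$. I would enumerate the large branches of $\mathfrak{b}$, express their weights as explicit linear combinations of $x, y, z$, and use this to read off which branches are maximal under each ordering of the weights together with the effect of each splitting. The qualitative picture is that a right maximal splitting at the large branch associated to $\delta_1$ (resp.\ $\delta_3$) undoes one power of the corresponding Dehn twist, reversing the action of $M_1$ (resp.\ $M_3$) on the weight vector, while undoing a single $\delta_2^{-1}$ requires three consecutive left maximal splittings in general but only two when the current weights satisfy $x = z$, because then the two large branches that would otherwise split sequentially carry equal maximal weight and split simultaneously in one maximal splitting. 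Theorem~\ref{thm_expansion} shows $v_x = v_z$ precisely when $\bm{p}$ is symmetric, and this equality is preserved throughout the splitting sequence; this is the source of the symmetric/asymmetric dichotomy.

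The main argument is a downward induction on the block index $i \in \{n,\dots,1\}$. The inductive hypothesis is that after performing the first $n - i$ prescribed blocks of splittings, the current measured train track takes the form $(\mathfrak{b}, \bm{x}_i)$ with $\bm{x}_i = M_1^{p_i}M_3^{p_i'}M_2^{q_i}\cdots M_1^{p_1}M_3^{p_1'}M_2^{q_1}\bm{v}$. The inductive step applies the local dictionary: in the asymmetric case, $p_i + p_i'$ right maximal splittings strip off the factors $M_1^{p_i}M_3^{p_i'}$ (sequentially, in an order determined by the current ordering of the weights) and then $3q_i$ left maximal splittings strip off $M_2^{q_i}$; in the symmetric case these counts drop to $p_i$ and $2q_i$ via the simultaneous splittings described above. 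Summing over $i$ gives the claimed $\ell$. At each intermediate splitting one must verify that the prescribed large branch currently carries the maximum weight, which follows from the explicit Perron-Frobenius data of Theorem~\ref{thm_expansion}. At the end of the induction $\bm{x}_0 = \bm{v}$; once one accounts for the isotopy identifying the final train track with $\mathfrak{b}$ via the action of $\Phi_{\bm{p}}$, the final measured train track coincides with $(\mathfrak{b}, \lambda_{\bm{p}}\bm{v})$, closing the cycle.

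The hardest part is the maximum-weight bookkeeping in the asymmetric case. Each $\delta_2^{-1}$-block consists of three left maximal splittings occurring in a specific order determined by the relative sizes of $x, y, z$, and establishing this chain of strict inequalities throughout the entire cycle relies heavily on the explicit form of $\bm{v}_{\bm{p}}$ from Theorem~\ref{thm_expansion}. Finally, to conclude that $\ell$ is genuinely the Agol cycle length rather than a multiple of a shorter period, I would check that no proper prefix of the prescribed splitting sequence closes up into an Agol cycle, which is ruled out by the block structure of $\bm{p}$ together with the quadratic-irrational nature of $\lambda_{\bm{p}}$ recorded in Remark~\ref{rem_quadratic-irrational}.
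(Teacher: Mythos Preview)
Your overall strategy matches the paper's: analyse maximal splittings locally on $(\mathfrak{b},\bm{x})$, package them into block-level identities (the paper's Proposition~\ref{prop_builidngblock}), and concatenate over $i=n,\dots,1$. However, two genuine gaps would cause the argument as written to fail.

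First, your opening paragraph misidentifies the target. You claim $\Phi_{\bm{p}}(\mathfrak{b},\bm{v})$ ``coincides with $(\mathfrak{b}, M_{\bm{p}}\bm{v}) = (\mathfrak{b},\lambda_{\bm{p}}\bm{v}) = (\tau_0,\mu_0)$''. This is false: $\Phi_{\bm{p}}(\mathfrak{b})$ is \emph{not} isotopic to $\mathfrak{b}$ (if it were, the Agol cycle length would be $0$). The Agol cycle ends at $\Phi_{\bm{p}}(\mathfrak{b},\bm{v})$, a genuinely different embedded train track, and nothing further happens; the cycle does not literally ``close up'' to $(\tau_0,\mu_0)$. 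Your later remark about an ``isotopy identifying the final train track with $\mathfrak{b}$ via the action of $\Phi_{\bm{p}}$'' contradicts the earlier sentence and suggests the endpoint is not clearly understood.

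Second, and more seriously, your inductive hypothesis is incorrect. After the first $n-i$ blocks of splittings applied to $(\mathfrak{b},\lambda_{\bm{p}}\bm{v})$, the resulting measured train track is \emph{not} $(\mathfrak{b},\bm{x}_i)$ but rather $\delta_1^{p_n}\delta_3^{p_n'}\delta_2^{-q_n}\cdots\delta_1^{p_{i+1}}\delta_3^{p_{i+1}'}\delta_2^{-q_{i+1}}(\mathfrak{b},\bm{x}_i)$: each block of splittings introduces a Dehn-twist factor (this is exactly the content of Lemma~\ref{lem_step-maximal-splitting} and Proposition~\ref{prop_builidngblock}). To continue the induction you must then perform the next block of splittings on this twisted train track, not on $(\mathfrak{b},\bm{x}_i)$. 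The paper handles this by invoking Lemma~\ref{lem_commute} (diffeomorphisms commute with maximal splittings) at every step to pull the accumulated Dehn twists past the splittings and collect them into $\Phi_{\bm{p}}$ at the end. Your proposal never mentions this commutation, and without it the concatenation does not go through. A minor related point: in the asymmetric case you must also check that $\bm{x}_i|_1 \ne \bm{x}_i|_3$ persists for every $i$ (not just for $\bm{v}$); the paper does this via Corollary~\ref{cor_matrix-computation}, which is a direct matrix calculation rather than an appeal to Theorem~\ref{thm_expansion}.
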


We later prove an analogous statement for the pseudo-Anosov maps $\phi_{\bm{p}} \in F_D$ inside the semi-group $F_D$ (See Theorem~\ref{thm_5-punctured-sphere-precise}).

As applications, 
we give  formulas on the total  splitting numbers $N(\Phi_{\bm{p}})$ and $N(\phi_{\bm{p}})$ for each $\bm{p} \in \mathcal{I}_n$ 
(Theorems~\ref{thm_total_FT}, \ref{thm_total_FD}). 
We also classify conjugacy classes of pseudo-Anosov maps in  $F_T$ and $F_D$ (Theorem~\ref{thm_conjugacy-class}). 
The total splitting numbers $N(\Phi_{\bm{p}})$ and $N(\phi_{\bm{p}})$ have the following additive property. 

\begin{thm}
\label{thm_additive}
For $\bm{p}=  (p_n, p_n', q_n, \dots, p_1, p_1', q_1)  \in  \mathcal{I}_n$ and  
$\bm{t}= (t_m, t_m', u_m, \dots, t_1, t_1', u_1)  \in  \mathcal{I}_m$, 
we set 
$\bm{pt} \colonequals  (p_n, p_n', q_n, \dots, p_1, p_1', q_1, t_m, t_m', u_m, \dots, t_1, t_1', u_1) \in \mathcal{I}_{n+m}$. 
The total splitting number of $\Phi_{\bm{pt}} \in F_T$  
satisfies 
$N(\Phi_{\bm{p t}}) = N(\Phi_{\bm{p }})+ N(\Phi_{\bm{ t}})$. 
A parallel statement holds for  $\phi_{\bm{pt}} \in F_D$.  
\end{thm}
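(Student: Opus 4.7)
The plan is to combine the explicit Agol cycle descriptions from Theorem~\ref{thm_2-torus} (and its $F_D$-analogue Theorem~\ref{thm_5-punctured-sphere-precise}) with the forthcoming formulas for $N$ in Theorems~\ref{thm_total_FT} and \ref{thm_total_FD}. The governing idea is that the Agol cycle of $\Phi_{\bm{pt}}$ is literally a concatenation of block-wise subsequences indexed by the triples $(p_i,p_i',q_i)$ and $(t_j,t_j',u_j)$, so counting large branches split within each block and summing gives the claim.

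First I would set $\bm{r}\colonequals \bm{pt}\in\mathcal{I}_{n+m}$ and read off the explicit Agol cycle of $\Phi_{\bm{r}}$ from Theorem~\ref{thm_2-torus}. Either way (symmetric or asymmetric), the sequence partitions into $n+m$ consecutive ``blocks'' of the form $\rs^{\bullet}\ls^{\bullet}$, one per triple. For each block I would count the number $B_i$ of distinct large branches that actually get split during its maximal splittings; by construction $N(\Phi_{\bm{r}})=\sum_{i=1}^{n+m}B_i$. Provided $B_i$ depends only on the triple $(p_i,p_i',q_i)$ (together with the symmetric/asymmetric status used in the cycle), the first $n$ summands add up to $N(\Phi_{\bm{p}})$ and the last $m$ to $N(\Phi_{\bm{t}})$, which is the desired equality. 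The $F_D$ case is identical after replacing $\mathfrak{b}$ by $\mathfrak{b}_L$ and invoking Theorem~\ref{thm_5-punctured-sphere-precise} and Theorem~\ref{thm_total_FD}.

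The main obstacle will be the mixed-symmetry case: if $\bm{p}$ is symmetric but $\bm{t}$ is asymmetric (or vice versa), then $\bm{pt}$ is asymmetric, so the Agol cycle of $\Phi_{\bm{pt}}$ uses the asymmetric pattern $\rs^{p_i+p_i'}\ls^{3q_i}$ on the blocks coming from $\bm{p}$ rather than the symmetric pattern $\rs^{p_i}\ls^{2q_i}$ that computes $N(\Phi_{\bm{p}})$. I would resolve this by checking that the per-block count $B_i$ of distinct large branches split agrees in the two patterns: each extra right maximal splitting in the asymmetric case either re-splits a branch already counted (so contributes nothing new to $N$) or is balanced by a matching phenomenon in the $\ls^{3q_i}$ segment. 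This identity, verified by inspecting the train track $\mathfrak{b}$ (resp. $\mathfrak{b}_L$) in the snapshots provided by Theorem~\ref{thm_2-torus}, is the heart of the argument; once it is established, the additivity is a clean summation.
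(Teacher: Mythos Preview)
Your overall direction---invoke the closed formulas of Theorems~\ref{thm_total_FT} and \ref{thm_total_FD} and observe they are sums of block-wise terms---is exactly what the paper does, and once you have those formulas the proof is a single line: $N(\Phi_{\bm{p}})=\sum_{i=1}^n(p_i+p_i'+4q_i)$ and $N(\phi_{\bm{p}})=\sum_{i=1}^n(A_i(\bm{p})+4q_i)$ are sums of terms each depending only on the $i$-th triple, so concatenating $\bm{p}$ and $\bm{t}$ just concatenates the sums.

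Where your proposal goes wrong is in the mixed-symmetry discussion. Your proposed mechanism (``re-splits a branch already counted, so contributes nothing new to $N$'') rests on a misreading of Definition~\ref{definition_splitting-number}: the total splitting number is the sum, over every maximal splitting in the cycle, of the number of large branches split at that step. There is no notion of counting \emph{distinct} branches, and re-splitting the same geometric branch later absolutely does contribute again. The phrase ``distinct large branches that actually get split'' is not what $N$ measures. The actual reason the mixed-symmetry case causes no trouble is already contained in the proof of Theorem~\ref{thm_total_FT}: in the symmetric pattern $\rs^{p_i}\ls^{2q_i}$ each of the $p_i+2q_i$ maximal splittings has splitting number $2$, giving $2p_i+4q_i=p_i+p_i'+4q_i$ (since $p_i=p_i'$), the \emph{same} value as the asymmetric pattern. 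So the per-block count is $p_i+p_i'+4q_i$ regardless of which Agol-cycle pattern the block sits in, and no further inspection is needed. Drop the second and third paragraphs of your plan and simply quote the two formulas.
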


The paper is organized as follows. 
In Section~\ref{section_preliminaries} 
we recall basic definitions and prove  lemmas. 
In Sections~\ref{section_FT} and \ref{section_FD} 
we compute Agol cycles of pseudo-Anosov maps in $F_T$ and  $F_D$. 
In Section~\ref{section_applications} 
we classify pseudo-Anosov conjugacy classes in  $F_T$ and $F_D$.

\section{Preliminaries}
\label{section_preliminaries}

The mapping class group $\mathrm{MCG}(\Sigma)$ of a surface $\Sigma= \Sigma_{g,n}$ 
is the group of isotopy classes of orientation preserving homeomorphisms of $\Sigma$ preserving the punctures setwise. 
We apply elements of the mapping class group from right to left; i.e., the product  $fg$ means that we apply $g$, then $f$. 
For simplicity we do not distinguish between a homeomorphism $\phi: \Sigma \rightarrow \Sigma$ 
and its  mapping class $[\phi] \in \mathrm{MCG}(\Sigma)$.

\subsection{Measured train tracks}
\label{subsection_traintrack}

A {\em train track} $\tau \subset \Sigma$ is a finite $C^1$-embedded graph, equipped with a well-defined tangent line at each vertex, 
also satisfying some additional properties as stated in Penner-Harer \cite{PennerHarer92}.  
In this paper we assume our train tracks to be  trivalent. A {\em measured train track} $(\tau, \mu)$ is a train track $\tau$ with a measure $\mu$. This is a function that assigns a positive weight to each branch. 
Measured train tracks are required to satisfy the {\em switch condition}. 
This means that if two branches $a, b$ merge into one branch $c$, then the weights satisfy $\mu(a) + \mu(b) = \mu(c)$. 
See Figure~\ref{fig_switch_shift}(1).

\begin{figure}[ht]
\centering
\includegraphics[height=1.8cm]{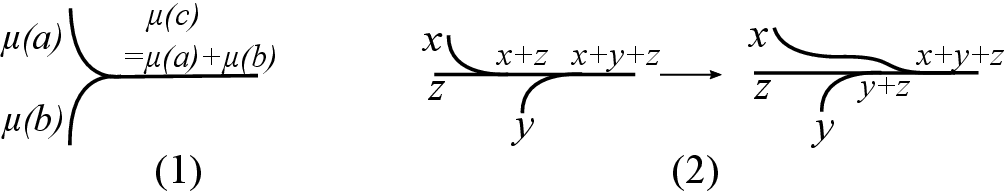}
\caption{(1) Switch condition. 
(2) Shifting.} 
\label{fig_switch_shift}
\end{figure}

\begin{definition}
\label{def_operation}
We consider a large branch as in Figure \ref{fig_split_shift}(1).  Depending on weights $x, y, z$ and $w$ in Figure \ref{fig_split_shift}(1), 
a {\em splitting} divides a large branch into two branches and connects the two parts with either a left-facing or right-facing branch, 
thereby preserving the switch condition. 
Depending on the type of a branch inserted, the splitting is called a {\em left} or {\em right splitting} at a large branch (Figure~\ref{fig_split_shift}(2)(3)).  
Similarly, we can produce new measured train tracks through the use of {\em folding} (Figure~\ref{fig_split_shift})  
and {\em shifting} (Figure \ref{fig_switch_shift}(2)).  
\end{definition}

Recall that 
if all the splittings in a maximal splitting $(\tau_0, \mu_0) \rightharpoonup (\tau_1, \mu_1)$ are left (resp. right) splittings, the maximal splitting is denoted by $\ls$ (resp. $\rs$) 
and called a left (resp. right) maximal splitting.
If  there exist both left and right splittings, the maximal splitting is denoted by $\lrs$ 
and called  a {\em mixed maximal splitting}.

Measured train tracks 
$(\tau, \mu)$, $(\tau', \mu')$ in $\Sigma$ are {\em equal} (and write $(\tau, \mu) = (\tau', \mu')$) 
if there exists a diffeomorphism $f: \Sigma \rightarrow \Sigma$ isotopic to the identity map such that 
$f(\tau, \mu)= (\tau', \mu')$.

Measured train tracks $(\tau, \mu)$, $(\tau', \mu')$ in $\Sigma$ are {\em equivalent} if they are related to each other 
by a sequence of splittings, foldings, shiftings and isotopies.  
Thus measured train tracks in a splitting sequence are equivalent. 
Equivalence classes of measured train tracks are in one-to-one correspondence with measured laminations \cite[Theorem~2.8.5]{PennerHarer92}. 

\begin{definition}
\label{definition:suited}
Let $(\mathcal{L}, \nu)$ be a measured lamination in $\Sigma$, 
and let $(\tau, \mu)$ be a measured train track in $\Sigma$. 
Then  
$(\tau, \mu)$ is {\em suited} to $(\mathcal{L},\nu)$ if there exists a differentiable map 
$f: \Sigma \rightarrow \Sigma$ 
homotopic to the identity map on $\Sigma$ with the following conditions: 
\begin{itemize}
\item 
$f(\mathcal{L}) = \tau$. 

\item 
$f$ is nonsingular on the tangent spaces to the leaves of $\mathcal{L}$. 

\item 
If $p$ is an interior point of a branch $e$ of $\tau$ 
then $\nu(f^{-1}(p)) = \mu(e)$. 
\end{itemize}
\end{definition}

\begin{definition}
\label{definition_splitting-number} \hfill
\begin{enumerate}
\item
The {\em splitting number of  a maximal splitting} $(\tau_0, \mu_0) \rightharpoonup (\tau_1, \mu_1)$  is the number of large branches split, i.e., the number of the large branches of $(\tau_0, \mu_0)$ with maximal weight. 

\item 
The {\em total splitting number of a finite sequence of maximal splittings} $(\tau,\mu) \rightharpoonup^n (\tau_n, \mu_n)$ 
is the sum of the splitting numbers over all maximal splittings in the finite sequence.

\item 
The {\em total splitting number of an Agol cycle} $(\tau_n, \mu_n) \rightharpoonup^m  (\tau_{n+m}, \mu_{n+m})$ of $\phi$, 
denoted by $N(\phi)$, is the sum of the splitting numbers over all maximal splittings $(\tau_{n+i}, \mu_{n+i}) \rightharpoonup (\tau_{n+i+1}, \mu_{n+i+1})$ in the Agol cycle. 
The Agol cycle length $\ell(\phi)$ is less than or equal to $N(\phi)$. The equality holds if and only if the splitting number of each maximal splitting in the Agol cycle is exactly $1$. 
\end{enumerate}
\end{definition}

\begin{definition}
\label{definition_ combinatorially-isomorphic}
Let $\phi, \phi'\colon \Sigma \to \Sigma$ be pseudo-Anosov maps with 
periodic splitting sequences 
$$ \mathscr{P}: (\tau_n, \mu_n) \rh^m (\tau_{n+m}, \mu_{n+m})= \phi(\tau_n, \lambda^{-1}\mu_n)\  \rightharpoonup  \cdots$$
 of  $\phi$ and 
$$\mathscr{P}': (\tau'_{n'}, \mu'_{n'}) \rh^{m'} (\tau'_{n'+m'}, \mu'_{n'+m'})= \phi'(\tau'_{n'},  (\lambda')^{-1}\mu'_{n'})\ \rightharpoonup  \cdots$$ 
of $\phi'$. 
We say that $\mathscr{P}$ and $\mathscr{P}'$ are {\em combinatorially isomorphic} (\cite{HodgsonIssaSegerman16})  
if $m=m'$ is fulfilled and 
there exist an orientation-preserving diffeomorphism 
$h\colon\Sigma \to \Sigma$, integers $p, q \in {\mathbb Z}_{\geq 0}$ and $c\in {\mathbb R}_{>0}$ such that the following conditions (1) and (2) hold.
\begin{enumerate}
\item[(1)] 
$\phi'=h \circ \phi \circ h^{-1}$. 

\item[(2)] 
$h(\tau_{i+p}, \mu_{i+p})=(\tau'_{i+q}, c\mu'_{i+q})$ for all $i\in {\mathbb Z}_{\geq 0}$.
\end{enumerate} 
We say that two Agol cycles  
$(\tau_n, \mu_n) \rh^m (\tau_{n+m}, \mu_{n+m})$ of $\phi$ and $(\tau'_{n'}, \mu'_{n'}) \rh^{m'} (\tau'_{n'+m'}, \mu'_{n'+m'})$ of $\phi'$ are {\em equivalent} 
if $m=m'$ is fulfilled and there exist an orientation-preserving diffeomorphism $h\colon\Sigma \to \Sigma$,  integers $p, p' \in {\mathbb Z}_{\geq 0}$ and $c\in {\mathbb R}_{>0}$ such that $h(\tau_{n+p}, \mu_{n+p})=(\tau'_{n'+ p'}, c\mu'_{n'+ p'})$. 
The condition for equivalent Agol cycles implies condition (2). See \cite[Lemma~2.2]{KawamuroKin23}.
\end{definition}

\begin{thm}[Theorem~5.3 in Hodgson-Issa-Segerman \cite{HodgsonIssaSegerman16}]
\label{thm_Hodgson-Issa-Segerman}
 Pseudo-Anosov maps 
$\phi, \phi'\colon\Sigma \to \Sigma$ are conjugate in $\mathrm{MCG}(\Sigma)$ 
if and only if $\mathscr{P}$ and $\mathscr{P}'$  are combinatorially isomorphic. 
\end{thm}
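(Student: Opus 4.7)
The ``if'' direction is immediate from condition (1) in the definition of combinatorial isomorphism, which is by itself the assertion that $\phi'=h\phi h^{-1}$, i.e., that $\phi$ and $\phi'$ are conjugate in $\mathrm{MCG}(\Sigma)$. So the content lies entirely in the converse. Given a conjugacy $\phi'=h\phi h^{-1}$, my plan is to transport the periodic sequence $\mathscr{P}$ by $h$, recognise the result as a periodic splitting sequence of $\phi'$, and then align it with $\mathscr{P}'$ up to a uniform scalar and index shift.

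To set up the transport, let $(\mathcal{L}^s,\nu^s)$ and $(\mathcal{L}'^s,\nu'^s)$ be the stable measured laminations of $\phi$ and $\phi'$, with dilatations $\lambda$ and $\lambda'$. Since the stable lamination of a pseudo-Anosov is unique up to scaling of the transverse measure and is natural under conjugation, $h_*(\mathcal{L}^s,\nu^s)=(\mathcal{L}'^s, c\nu'^s)$ for some $c>0$, and $\lambda=\lambda'$. Because $(\tau_n,\mu_n)$ is suited to $(\mathcal{L}^s,\nu^s)$, the image $h(\tau_n,\mu_n)$ is suited to $(\mathcal{L}'^s, c\nu'^s)$. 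Splittings are natural under a diffeomorphism, so applying $h$ termwise to $\mathscr{P}$ and rescaling the measure by $c^{-1}$ produces an infinite maximal splitting sequence of measured train tracks suited to $(\mathcal{L}'^s,\nu'^s)$, with the same period $m$ and satisfying the same cycle relation as $\mathscr{P}$ under $\phi'=h\phi h^{-1}$.

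Next I would apply Theorem~\ref{thm:Agol} to $\phi'$ starting from this transported sequence. After finitely many (say $p$) maximal splittings the sequence is contained in an Agol cycle of $\phi'$. The key ingredient is the uniqueness, up to reindexing and scaling, of the Agol cycle of a given pseudo-Anosov: since the maximal splitting operation is deterministic on measured train tracks and the Agol cycle is an embedded closed orbit in the projectivised space of suited measured train tracks, any two periodic splitting sequences of $\phi'$ that eventually share a projective term must coincide thereafter. Applying this to the transported cycle and the cycle in $\mathscr{P}'$, one obtains an integer $q\geq 0$ such that $h(\tau_{n+p+i},\mu_{n+p+i})=(\tau'_{n'+q+i},c\mu'_{n'+q+i})$ for every $i\geq 0$, and the equality $m=m'$ follows because the period is an intrinsic invariant of $\phi'$.

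The hard step is establishing this uniqueness of the Agol cycle of $\phi'$, i.e., that two maximal splitting sequences suited to the stable lamination of $\phi'$ cannot limit onto two disjoint periodic cycles. One argues that the action of $\phi'$ on the weights of any suited measured train track is attractive in the Perron-Frobenius sense, while the maximal splitting operation is entirely determined by the underlying measured train track, so all orbits must converge to a single embedded periodic cycle in the projective space of suited measured train tracks. Once this uniqueness is in hand, the data $(h,p,q,c)$ exhibiting the combinatorial isomorphism is read off essentially by construction from the alignment above.
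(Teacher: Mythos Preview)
The paper does not prove this theorem at all: it is quoted verbatim as Theorem~5.3 of Hodgson--Issa--Segerman \cite{HodgsonIssaSegerman16} and used as a black box, with no proof or sketch supplied. So there is nothing in the paper to compare your proposal against.

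That said, your outline is broadly the right shape for how the result is actually proved in \cite{HodgsonIssaSegerman16}. One point worth sharpening: the ``hard step'' you flag, uniqueness of the Agol cycle, is not really a Perron--Frobenius attractivity argument on weights. What is actually used is that any two measured train tracks suited to the \emph{same} measured lamination become equal after finitely many maximal splittings of each (this is part of Agol's theory; see also the discussion around \cite[Lemma~2.2]{KawamuroKin23} referenced in Definition~\ref{definition_ combinatorially-isomorphic}). Once both the transported sequence $h(\mathscr{P})$ and $\mathscr{P}'$ are suited to $(\mathcal{L}'^s,\nu'^s)$ up to a scalar, this merging property gives the indices $p,q$ and the alignment $h(\tau_{n+p+i},\mu_{n+p+i})=(\tau'_{n'+q+i},c\mu'_{n'+q+i})$ directly, without invoking any dynamical convergence in a projectivised space. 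Your appeal to an ``embedded closed orbit'' and Perron--Frobenius contraction is suggestive but not how the argument runs, and would need substantial work to make rigorous on its own.
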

As a consequence,  the equivalence class of an Agol cycle of $\phi$ is a conjugacy invariant. 
The  Agol cycle length $\ell(\phi)$ and total splitting number $N(\phi)$ are conjugacy invariants as well, 
since they are equal for equivalent Agol cycles. 

When we regard a maximal splitting $(\tau, \mu)\rightharpoonup (\tau', \mu')$ 
as an operation on the measured train track, 
we write
$ (\tau', \mu')=\  \rightharpoonup (\tau, \mu)$.  
We write $n$ consecutive left (resp. right) maximal splittings 
$(\tau, \mu) \ls^n (\tau_n, \mu_n)$ (resp. $(\tau, \mu) \rs^n (\tau_n, \mu_n)$) as 
$(\tau_n, \mu_n) = \  \ls^n (\tau, \mu)$ (resp. $(\tau_n, \mu_n) =\   \rs^n (\tau, \mu)$).We also write a finite sequence $(\tau, \mu) \ls^n (\tau_n, \mu_n) \rs^m (\tau_{n+m}, \mu_{n+m})$ as 
$ (\tau_{n+m}, \mu_{n+m}) = \ \rs^m  \circ  \ls^n (\tau, \mu)$.

The operation $\rightharpoonup$ and a diffeomorphism $\phi: \Sigma \rightarrow \Sigma$ commute 
on measured train tracks:

\begin{lemma}[Lemma 2.1 in \cite{KawamuroKin23}]
\label{lem_commute}
Let $(\tau, \mu)$ be a measured train track in $\Sigma$ and 
$\phi: \Sigma \rightarrow \Sigma$  an orientation-preserving diffeomorphism. If $(\tau, \mu)$ admits consecutive $n$ left maximal splittings, then we have 
$(\phi  \circ {\ls^n}) (\tau, \mu) = ({\ls^n} \circ \phi)  (\tau ,\mu)$. A parallel statement holds for $\rs^n$. 
\end{lemma}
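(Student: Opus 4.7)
The plan is to reduce the statement to the case $n=1$ and then iterate. An orientation-preserving diffeomorphism $\phi$ carries the trivalent graph structure of $\tau$ onto that of $\phi(\tau)$: switches map to switches, branches to branches, and large branches to large branches. By the definition of the pushforward measure $\phi_*(\mu)$, weights are preserved, i.e., $\phi_*(\mu)(\phi(e)) = \mu(e)$ for every branch $e$ of $\tau$. In particular, the collection of large branches carrying maximal weight in $(\tau, \mu)$ is mapped bijectively by $\phi$ onto the collection of large branches carrying maximal weight in $\phi(\tau, \mu)$.

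First I would treat the case $n=1$. Because $\phi$ is orientation-preserving, the local picture near a large branch (Figure~\ref{fig_split_shift}(1)) is transported without reflection, so a left splitting at $e$ in $(\tau,\mu)$ becomes a left splitting at $\phi(e)$ in $\phi(\tau,\mu)$, and similarly for right splittings. Consequently, performing the left maximal splitting on $(\tau,\mu)$ and then applying $\phi$ produces the same measured train track as applying $\phi$ first and then performing the left maximal splitting on $\phi(\tau,\mu)$. This gives $\phi \circ \ls = \ls \circ \phi$ on $(\tau,\mu)$; in particular $\phi(\tau,\mu)$ also admits a left maximal splitting.

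Next I would induct on $n$. Let $(\tau_k,\mu_k)$ denote the $k$-th term in the left maximal splitting sequence starting at $(\tau,\mu)=(\tau_0,\mu_0)$, which exists for $0 \leq k \leq n$ by hypothesis. Assuming the claim for $n-1$, so that $\phi(\tau_{n-1},\mu_{n-1}) = \ls^{n-1}\phi(\tau,\mu)$ and this train track admits a further left maximal splitting, I apply the case $n=1$ to $(\tau_{n-1},\mu_{n-1})$ to obtain
$$\phi(\tau_n,\mu_n) \;=\; \phi \circ \ls(\tau_{n-1},\mu_{n-1}) \;=\; \ls \circ \phi(\tau_{n-1},\mu_{n-1}) \;=\; \ls^n \phi(\tau,\mu),$$
completing the induction. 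The parallel statement for $\rs^n$ is proved by the identical argument.

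The one subtle point is the base case, where one must verify that an orientation-preserving diffeomorphism preserves the local structure distinguishing a left from a right splitting; if $\phi$ reversed orientation, the two types of splittings would be exchanged and the lemma would fail. Once this is confirmed from the definition of left/right splitting via the local picture, the rest of the argument is essentially bookkeeping about how $\phi$ transports branches, weights, and the maximality condition.
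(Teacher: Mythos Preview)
The paper does not supply its own proof of this lemma; it is quoted verbatim as Lemma~2.1 of \cite{KawamuroKin23} and used as a black box. Your argument is correct and is essentially the standard one: $\phi$ carries the combinatorial structure and the weights of $(\tau,\mu)$ bijectively onto those of $\phi(\tau,\mu)$, so the set of maximal-weight large branches is preserved, and orientation-preservation guarantees that the local left/right type of each splitting is unchanged; induction on $n$ then finishes the job. There is nothing to compare against in this paper, but your write-up would serve as a complete proof.
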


\begin{remark}
\label{rem_commute} (This remark is used for the proof of Theorem~\ref{thm_conjugacy-class}.)  
By Lemma~\ref{lem_commute} we have the following commutative diagram: 
$$
\begin{array}{ccccccc}
(\tau, \mu) & \rightharpoonup &  (\tau_1, \mu_1) &  \rightharpoonup & \cdots & \rightharpoonup &(\tau_n, \mu_n) \\ 
 \downarrow     &                                & \downarrow &                               &            &                           &   \downarrow         \\\phi(\tau, \mu) & \rightharpoonup &  \phi(\tau_1, \mu_1)&  \rightharpoonup & \cdots & \rightharpoonup & \phi(\tau_n, \mu_n)
\end{array} 
$$
Lemma~\ref{lem_commute}  tells us that 
the (left, right, mixed) type of the maximal splitting 
$\phi(\tau_i, \mu_i) \rightharpoonup \phi(\tau_{i+1}, \mu_{i+1}) $ 
is the same as that of $(\tau_i, \mu_i) \rightharpoonup (\tau_{i+1}, \mu_{i+1}) $. 

\end{remark}

\subsection{Perron-Frobenius matrices}
\label{subsection_PF}

we write $A \ge B$ if $a_ {rs} \ge  b_{rs}$ for all $r, s$. 
Suppose that $M$ is an $n$ by $n$ square matrix with nonnegative integer entries. 
We say that  $M$ is {\em Perron-Frobenius} if some power of $M$ is a positive matrix. 
Perron-Frobenius matrices have the following properties.

\begin{thm}[Perron-Frobenius]
\label{thm_Perron-Frobenius}
A Perron-Frobenius $M$ has a real eigenvalue $\lambda>1$
which exceeds the moduli of all other eigenvalues. 
There exists  a strictly positive eigenvector $\bm{v}$ associated with $\lambda$. 
Moreover, $\bm{v}$ is the unique  positive eigenvector of $M$ (up to positive multiples), and 
$\lambda$ is a simple root of the characteristic equation of $M$. 
\end{thm}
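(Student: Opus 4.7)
The plan is to reduce to the classical case of a strictly positive matrix. By hypothesis, there exists $k \geq 1$ such that $A \colonequals M^k$ has all entries strictly positive, so I would first prove the Perron theorem for $A$ and then transfer the conclusions back to $M$.

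For the positive case, I would use the Collatz--Wielandt variational approach. Define
$$r(x) \colonequals \min_{i : x_i > 0} \frac{(Ax)_i}{x_i}$$
for nonzero $x \geq 0$, and set $\lambda_A \colonequals \sup\{r(x) : x \geq 0,\ \|x\|=1\}$. A compactness argument on the standard simplex, combined with a smoothing step (replacing any maximizer $x$ by $Ax$, which is strictly positive since $A > 0$), shows this supremum is attained by a strictly positive vector $v$ with $Av = \lambda_A v$. Uniqueness up to scaling follows by the same smoothing trick: any other nonnegative eigenvector $w$ not proportional to $v$ would yield $v - c w \geq 0$ with a zero entry but eigenvalue $\lambda_A$, which upon applying $A$ becomes strictly positive --- a contradiction. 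Strict dominance of $\lambda_A$ over every other eigenvalue $\mu$ of $A$ comes from the triangle inequality: if $Ay = \mu y$ with $|\mu| = \lambda_A$, then $A|y| \geq |\mu||y|$ componentwise, and the variational characterization forces equality and hence $|y| = cv$; analyzing the equality case of $|(Ay)_i| \leq (A|y|)_i$ in terms of phases of $y_j$ then forces $y$ itself to be a scalar multiple of $v$, so $\mu = \lambda_A$. Algebraic simplicity can be extracted from the fact that the left Perron eigenvector (the dominant eigenvector of $A^\top$) is also strictly positive, so the rank-one matrix $v w^\top$ with $w^\top A = \lambda_A w^\top$ and $w^\top v = 1$ has nonzero trace and equals a nonzero scalar multiple of $\mathrm{adj}(\lambda_A I - A)$, showing $\lambda_A$ is a simple root of $\det(tI - A)$.

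The transfer from $A = M^k$ back to $M$ is then routine: $Mv$ satisfies $A(Mv) = M(Av) = \lambda_A(Mv)$, so $Mv$ is a Perron eigenvector of $A$ and equals $\lambda v$ for some $\lambda$, with $\lambda > 0$ because $v > 0$ and $Mv \geq 0$, and $\lambda^k = \lambda_A$. Any other eigenvalue $\mu$ of $M$ has $\mu^k$ an eigenvalue of $A$, so $|\mu|^k \leq \lambda_A = \lambda^k$; equality combined with simplicity of $\lambda_A$ for $A$ would force $\mu^k = \lambda_A$ and the corresponding eigenvector of $M$ to lie in the Perron line of $A$, which forces $\mu = \lambda$ after a short phase argument using $v > 0$. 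Uniqueness of the positive eigenvector and simplicity for $M$ descend from the corresponding facts for $A$. Finally, $\lambda > 1$ follows from the integrality hypothesis: since $M$ has nonnegative integer entries and $M^k > 0$, in fact $M^k \geq J_n$, the all-ones matrix, hence $\lambda^k = \lambda_A \geq \lambda(J_n) = n$, which gives $\lambda > 1$ whenever $n \geq 2$.

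The main obstacle is the equality-case analysis that gives strict dominance and simplicity in the positive case, especially the passage from the real inequality $A|y| \geq \lambda_A |y|$ to the conclusion that a complex eigenvector $y$ with $|\mu| = \lambda_A$ must be a genuine scalar multiple of $v$, rather than only satisfying $|y| = cv$. Once this is handled cleanly, the reduction from $A$ to $M$ is essentially formal, but one has to be a little careful that spurious eigenvalues of $M$ on the circle $|\mu| = \lambda$ (which a priori the $k$-th power operation could collapse) are ruled out by the simplicity of $\lambda_A$ for $A$ together with positivity of $v$.
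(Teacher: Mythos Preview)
The paper does not prove this theorem at all: immediately after the statement it simply writes ``For the proof, see \cite{Gantmacher59}.'' It is quoting the classical Perron--Frobenius theorem as background.

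Your outline is a correct and standard route: Collatz--Wielandt for the positive matrix $A = M^k$, the equality-case phase analysis to get strict dominance, geometric and then algebraic simplicity via the adjugate, and finally the descent from $A$ to $M$ using that the Perron eigenspace of $A$ is $M$-invariant. The argument for $\lambda > 1$ via $M^k \geq J_n$ and $\lambda(J_n) = n$ is also fine for $n \geq 2$; note that the statement as written is actually false for $n=1$ (take $M=[1]$), so some such dimension hypothesis is implicitly needed, and in the paper's applications $M$ is always $3\times 3$. Since the paper defers entirely to Gantmacher, there is no meaningful comparison of approaches to make; your sketch would serve as a self-contained proof if one were desired.
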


For the proof, see \cite{Gantmacher59}. We call $\lambda= \lambda(M)>1$  the {\em Perron-Frobenius eigenvalue} of $M$ and 
call  $\bm{v}$ a {\em Perron-Frobenius eigenvector}. 
 
 \begin{definition}
 \label{definition_In}
 For each $n \in {\mathbb N}$ 
 the subset $\mathcal{I}_n \subset \mathbb{N}_0^{3n}$ 
 is defined as follows. 
  \[ \mathcal{I}_n \colonequals \left\{ \bm{p}= (p_n, p_n', q_n, \dots, p_1, p_1', q_1)  \in {\mathbb N}_0^{3n} \middle|
 \begin{array}{l}
 \ ^{\exists} j, \ ^{\exists}k \in  \{1, \dots, n\}\ \mbox{such\ that}\ 
 p_j, p_k' >0
 \\
  p_i+p_i',  q_i >0 
 \ \mbox{for each}\ i \in \{1, \dots, n\}
  \end{array}
 \right\}\]
 For example, 
 $(1,0,2,0,1,1) \in \mathcal{I}_2$, $(1,0,2,1,0,1) \not\in \mathcal{I}_2$. 
By definition, 
 $ \mathcal{I}_1 = {\mathbb N}^{3}$. 
 \end{definition}

 We recall the matrix  
 $M_{\bm{p}} = M_1^{p_n}M_3^{p_n'}M_2^{q_n} \cdots M_1^{p_1}M_3^{p_1'}M_2^{q_1}$ for $\bm{p}  \in  \mathcal{I}_n $.

 \begin{lem}
 \label{lem_Perron-Frobenius-p}
 For each $\bm{p}  \in  \mathcal{I}_n $, $M_{\bm{p}}$ is Perron-Frobenius. 
 \end{lem}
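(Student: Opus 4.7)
The plan is to prove the stronger statement that $M_{\bm{p}}$ itself is entry-wise strictly positive; taking the first power, this is sufficient for $M_{\bm{p}}$ to be Perron-Frobenius. The crucial observation is that each generator satisfies $M_i \geq I$ entry-wise for $i \in \{1,2,3\}$, since every $M_i$ has $1$'s on the diagonal and nonnegative off-diagonal entries. From this one obtains the following monotonicity: if $A \geq I$ and $Z \geq 0$ are matrices, then $AZ \geq Z$ and $ZA \geq Z$. Iterating, any product of matrices each $\geq I$ is itself $\geq I$, and ``dropping'' such factors from inside a nonnegative product only decreases the entries.

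The key computation is
\[
M_1 M_3 M_2 = \begin{pmatrix} 2 & 1 & 1 \\ 1 & 1 & 1 \\ 1 & 1 & 2 \end{pmatrix},
\]
which is strictly positive; since $M_1 M_3 = M_3 M_1$, the product $M_3 M_1 M_2$ equals the same matrix. I would then use the definition of $\mathcal{I}_n$ to exhibit $M_1 M_3 M_2$ as a ``subfactor'' of $M_{\bm{p}}$. By hypothesis there exist $j, k \in \{1, \ldots, n\}$ with $p_j > 0$ and $p_k' > 0$, and $q_1 > 0$. In the factorization
\[
M_{\bm{p}} = M_1^{p_n} M_3^{p_n'} M_2^{q_n} \cdots M_1^{p_1} M_3^{p_1'} M_2^{q_1},
\]
the factor $M_1^{p_j}$ contains an $M_1$, the factor $M_3^{p_k'}$ contains an $M_3$, and the factor $M_2^{q_1}$ contains an $M_2$. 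Because block $1$ is the rightmost block, the extracted $M_2$ sits to the right of both the extracted $M_1$ and $M_3$ (even when $j = 1$ or $k = 1$, since the $M_2^{q_1}$ appears at the end of block $1$). Thus I can write $M_{\bm{p}} = A \cdot X \cdot B \cdot Y \cdot C \cdot M_2 \cdot D$ with $\{X, Y\} = \{M_1, M_3\}$ in whichever order they occur in $M_{\bm{p}}$, and where $A, B, C, D$ are products of $M_i$'s (all $\geq I$). Monotonicity gives $M_{\bm{p}} \geq X Y M_2 = M_1 M_3 M_2 > 0$, and Theorem~\ref{thm_Perron-Frobenius} completes the proof.

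No substantial obstacle is expected. The only bookkeeping is the case analysis on the positions of the chosen $M_1$ and $M_3$ factors---whether $j < k$, $j > k$, or $j = k$, and whether $j$ or $k$ equals $1$---but the rightmost position of block $1$ and the commutativity $M_1 M_3 = M_3 M_1$ unify all these cases.
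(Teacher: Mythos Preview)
Your proposal is correct and follows essentially the same approach as the paper's proof: both use $M_i \geq I$, drop factors by monotonicity, and reduce to the explicit computation $M_{\bm{p}} \geq M_1 M_3 M_2 = M_3 M_1 M_2 > 0$. Your version is more carefully written, spelling out the bookkeeping on where the extracted $M_1$, $M_3$, $M_2$ sit in the product, but the substance is identical.
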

 
 \begin{proof}
A  computation shows that 
$M_i^n \ge M_i \ge  \left(\begin{smallmatrix}1 & 0 & 0 \\0 & 1 & 0 \\0 & 0 & 1\end{smallmatrix}\right)$ 
for $n \in {\mathbb N}$ and $i = 1,2,3$. 
By definition of $\mathcal{I}_n$, 
 all the matrices $M_1, M_2$ and $M_3$ appear in the product $M_{\bm{p}}$  at least once. 
 We can check that 
 $M_{\bm{p}} \ge M_1 M_3 M_2  = M_3 M_1 M_2> \bm{0}$. 
 This means that $M_{\bm{p}}$ is positive. 
 In particular, $M_{\bm{p}}$ is Perron-Frobenius. 
 (This fact also follows from  \cite[Theorem~3.1]{Penner88}.)
 \end{proof}

In this section 
we give an explicit description of a Perron-Frobenius eigenvector of $M_{\bm{p}}$ and its eigenvalue $\lambda_{\bm{p}}$. 
To do this, we first consider the 
infinite continued fraction expansion of an irrational number $a$. 
$$
a= a_0+ \cfrac{1}{a_1 + 
           \cfrac{1}{\ddots+ 
           \cfrac{1}{a_{k}+ \cdots }}}
= [a_0, a_1, \cdots, a_k, \cdots]
$$
with $a_i \in {\mathbb Z}$ and $a_i >0$ for $i \ge 1$. 
By Lagrange's theorem, 
$a$ is a quadratic irrational if and only if 
the expansion is eventually periodic; i.e., there exists $t \ge 1$ with $a_i = a_{i+t}$ for all $i \gg 1$. 
We write a quadratic irrational 
$a= [a_0, \cdots, a_{k-1}, b_0, \cdots, b_{t-1}, b_0, \cdots, b_{t-1}, \cdots\ ]$ 
as $[a_0, \cdots, a_{k-1}, \overline{b_0, \cdots, b_{t-1}}]$. 

 Given $\bm{p} \in \mathcal{I}_n$, we next define the width $w_{\bm{p},j}$ and height $h_{\bm{p},j} $ 
 for each $j \in \mathbb{N}_0$ as follows. 
\begin{eqnarray*}
& &\mbox{For}\ j=0, \ w_{\bm{p}, 0} = 1 \ \text{ and }\ h_{\bm{p}, 0} = [0, \overline{p_n+p_n', q_n, ..., p_1 + p_1', q_1}].
\\
& &\mbox{For}\ j >0, \ w_{\bm{p}, j} = w_{\bm{p}, j-1}-(p_{n-j+1}+p_{n-j+1}')h_{\bm{p}, j-1} \ \text{ and }\ h_{\bm{p}, j} = h_{\bm{p}, j-1}-q_{n-j+1}w_{\bm{p},j}.
\end{eqnarray*}
The \textit{split ratio} $s_{\bm{p}}$ ($ 0 < s_{\bm{p}}< 1$) is defined by 
$$s_{\bm{p}} = \sum_{i = 0}^\infty p_{-i}h_{\bm{p},i},$$
where the index of $p_{-i}$ is understood to be mod $n$.

\begin{figure}[ht]
\centering
\includegraphics[height=5cm]{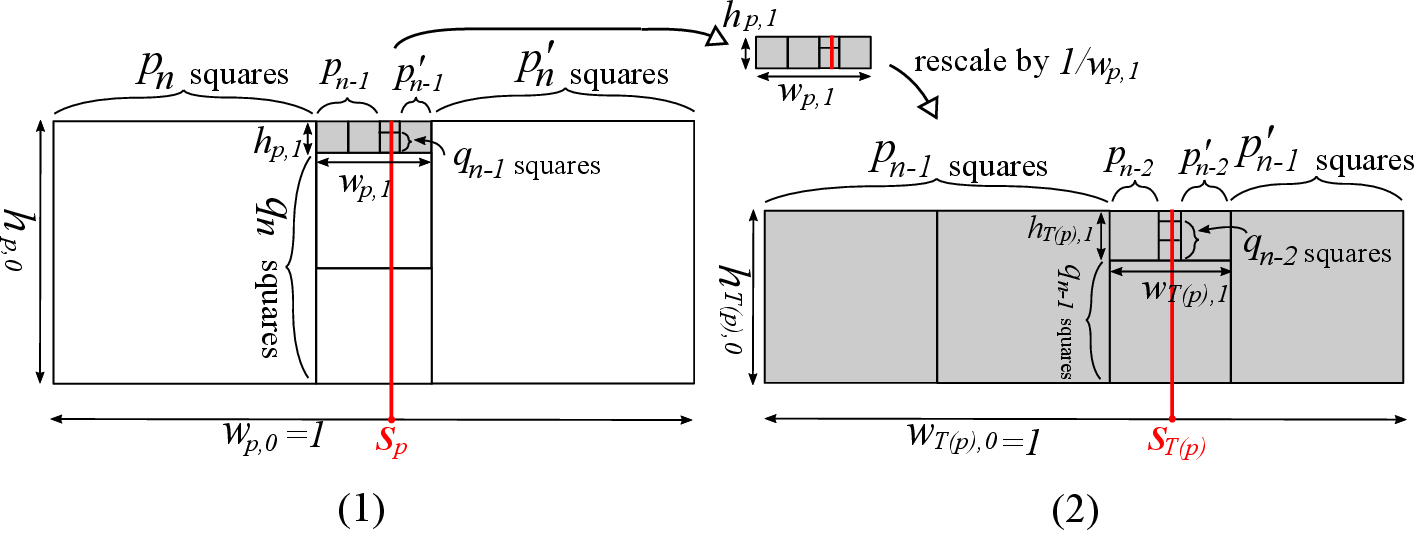}
\caption{Partitioned rectangles (1) $\mathrm{rect}(\bm{p})$, (2) $\mathrm{rect}(T(\bm{p}))$ 
for $\bm{p} = (1, 1, 2, 2, 1, 1), \ T(\bm{p})= (2,1,1,1,1,2) \in \mathcal{I}_2$. 
}
\label{fig_rectangle}
\end{figure}

\begin{figure}[ht]
\centering
\includegraphics[height=4cm]{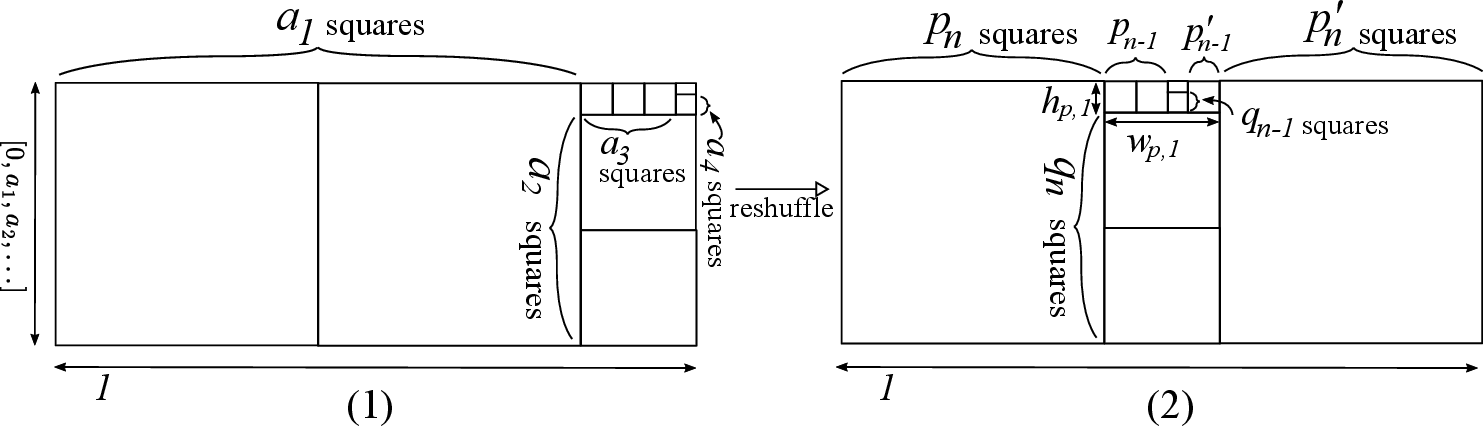}
\caption{
(1) Rectangle model for $[0, a_1, a_2,\dots]= [0,2,2,3,1, \dots]$. 
(2) Reshuffling squares when $[0, \overline{a_1, a_2,a_3, a_4}] = [0, \overline{1+1, 2,2+1, 1}]$. 
}
\label{fig_usefule_rectangle}
\end{figure}

\begin{definition} (Partitioned rectangle.) For $\bm{p} = (p_n, p_n', q_n, ..., p_1, p_1', q_1) \in \mathcal{I}_n$ 
we define a partitioned rectangle $\mathrm{rect}(\bm{p})$
as in Figure~\ref{fig_rectangle}. We start out with a rectangle of width $1$ 
and height $h_{\bm{p},0}= [0, \overline{p_n+p_n', q_n, ..., p_1 + p_1', q_1}]$. We then partition the rectangle into squares by the following procedure. First, we insert $p_n$ squares from the left. In the remaining rectangle, we insert $p_n'$ from the right and then $q_n$ from the bottom. We do the same for $p_{n-1}, p_{n-1}', q_{n-1}, \dots, p_{1}, p_{1}', q_{1}, p_{n}, p_{n}', q_{n}, \dots$, repeating the insertion pattern cyclically, infinitely many times. 
Rectangles for the example $\bm{p} = (1, 1, 2, 2, 1, 1) $ and $T(\bm{p})$ are illustrated in Figure~\ref{fig_rectangle}. 
\end{definition}

\begin{lem}
The partitioned rectangle $\mathrm{rect}(\bm{p})$ is well defined. 
\end{lem}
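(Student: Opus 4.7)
The plan is to show that ``well defined'' amounts to checking that at each stage of the insertion procedure the remaining rectangle has strictly positive dimensions; in other words, $w_{\bm{p},j} > 0$ and $h_{\bm{p},j} > 0$ for every $j \geq 0$. Once this holds, the recursive definitions make sense and a square of side equal to the short side of the current rectangle can always be inserted.

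First I would unpack the cyclic sequence of continued fraction entries. Let $(a_k)_{k \geq 1}$ be the periodic sequence with $a_{2i-1} = p_{n-i+1}+p_{n-i+1}'$ and $a_{2i} = q_{n-i+1}$ (indices of $p, p', q$ read mod $n$), so that $h_{\bm{p},0}=[0,\overline{a_1, a_2, \dots, a_{2n}}]$. Set $\beta_k \colonequals [0, a_k, a_{k+1}, \dots]$ for $k \geq 1$. Since each $a_k \geq 1$ (this uses precisely the defining conditions $p_i+p_i' > 0$ and $q_i > 0$ in Definition~\ref{definition_In}), each $\beta_k$ is a positive irrational in $(0,1)$, and they satisfy the standard recursion $\beta_k = 1/(a_k + \beta_{k+1})$, equivalently $a_k\beta_k + \beta_k\beta_{k+1} = 1$.

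Next I would prove by induction on $j \geq 0$ the identities
\begin{align*}
w_{\bm{p},j} &= h_{\bm{p},0}\,\beta_2 \beta_3 \cdots \beta_{2j}, \\
h_{\bm{p},j} &= h_{\bm{p},0}\,\beta_2 \beta_3 \cdots \beta_{2j+1},
\end{align*}
interpreting empty products as $1$. The base case $j=0$ gives $w_{\bm{p},0} = 1 = h_{\bm{p},0}\cdot\beta_1^{-1}\cdot\beta_1 = h_{\bm{p},0}(a_1 + \beta_2)$; using $a_1\beta_1+\beta_1\beta_2 = 1$ and $h_{\bm{p},0}=\beta_1$ this collapses to $1$, confirming the base case. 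For the inductive step, compute
\[
w_{\bm{p},j} = w_{\bm{p},j-1} - a_{2j-1}\,h_{\bm{p},j-1} = h_{\bm{p},0}\,\beta_2\cdots\beta_{2j-1}\bigl(1 - a_{2j-1}\beta_{2j-1}\bigr),
\]
and then use $1 - a_{2j-1}\beta_{2j-1} = \beta_{2j-1}\beta_{2j}$, which follows from the identity for $\beta_{2j-1}$. The analogous manipulation handles $h_{\bm{p},j}$. Since every $\beta_k\in(0,1)$, the two displayed formulas give $w_{\bm{p},j}, h_{\bm{p},j} > 0$ for all $j$, so each prescribed block of square insertions fits inside the remaining rectangle, and $\mathrm{rect}(\bm{p})$ is well defined. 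As a bonus, $w_{\bm{p},j}, h_{\bm{p},j}\to 0$, so the squares exhaust the rectangle.

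The only mild obstacle is bookkeeping: matching the cyclic insertion pattern $(p_{n-j+1}+p_{n-j+1}', q_{n-j+1})$ to the continued fraction entries $(a_{2j-1}, a_{2j})$ and keeping the indices in $\beta_k$ synchronized through the induction. Once the correspondence is nailed down, the two identities and the positivity of the $\beta_k$ finish the argument immediately.
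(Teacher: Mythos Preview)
Your approach is correct and takes a more explicit route than the paper's. The paper argues geometrically: it invokes the standard rectangle model for an infinite continued fraction $[0,a_1,a_2,\dots]$ (a width-$1$ rectangle of that height is tiled by iteratively removing $a_1,a_2,\dots$ squares), and then observes that each horizontal block of $a_{2i-1}=p_{n-i+1}+p_{n-i+1}'$ squares can simply be reshuffled so that $p_{n-i+1}$ sit on the left and $p_{n-i+1}'$ on the right. You instead compute $w_{\bm{p},j}$ and $h_{\bm{p},j}$ directly as products of the tails $\beta_k=[0,a_k,a_{k+1},\dots]$ and read off positivity from $\beta_k\in(0,1)$. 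Both rest on the same underlying fact (all $a_k\geq 1$), but your version makes the intermediate dimensions explicit and yields the convergence $w_{\bm{p},j},h_{\bm{p},j}\to 0$ for free. Two small slips to fix: with the empty-product convention your displayed formula gives $w_{\bm{p},0}=h_{\bm{p},0}\neq 1$, so it is cleaner to write $w_{\bm{p},j}=\prod_{k=1}^{2j}\beta_k$ and $h_{\bm{p},j}=\prod_{k=1}^{2j+1}\beta_k$; and in the inductive-step factoring the common factor should be $h_{\bm{p},0}\,\beta_2\cdots\beta_{2j-2}$ rather than $h_{\bm{p},0}\,\beta_2\cdots\beta_{2j-1}$. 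These are cosmetic; the argument is sound.
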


\begin{proof}
    We introduce a useful tool for infinite continued fractions. (See also \cite{Knott}.) 
    We define a rectangle whose width is $1$ and whose height is $[0, a_1, a_2,\dots]$ for $a_i \in {\mathbb N}$. 
    Then it is possible to iteratively  
    fill in $a_1, a_2,\dots$ squares as in Figure~\ref{fig_usefule_rectangle}(1). 
    Suppose that $[0, \overline{a_1, a_2, \dots , a_ {2n}}]= [0, \overline{p_n+ p_n', q_n, \dots, p_1+ p_1', q_1}]$. 
    We reshuffle the squares such that $p_i$ squares are filled from the left and $p_i'$  squares are filled  from the right 
    (see Figure~\ref{fig_rectangle}). This shows that the partition into squares for $\boldsymbol{p} \in \mathcal{I}_n$ is well defined. 
\end{proof}

The values $w_{\bm{p},0}$, $h_{\bm{p},0}$ can be thought of as the widths and heights of the rectangles obtained when we iteratively delete outside squares as in Figure~\ref{fig_rectangle}. The values are indicated in the picture. 

\begin{theorem}
\label{thm_expansion}
For $ \bm{p}=  (p_n, p_n', q_n, \dots, p_1, p_1', q_1) \in \mathcal{I}_n$ 
 the  Perron-Frobenius eigenvalue $\lambda_{\bm{p}}$ of $M_{\bm{p}}$ and its  eigenvector $\bm{v}>\bm{0}$ 
are given by 
$$\lambda_{\bm{p}}= \frac{1}{w_{\bm{p},n}}\ \hspace{2mm}\mbox{and}\ \hspace{2mm}
\bm{v} = \left(\begin{smallmatrix}  s_{\bm{p}} \\ h_{\bm{p},0} \\ 1- s_{\bm{p}} \end{smallmatrix}\right).$$ 
\end{theorem}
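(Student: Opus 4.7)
The plan is to apply $M_{\bm{p}}$ to $\bm{v}$ block by block and identify the result with $\lambda_{\bm{p}}\bm{v}$ explicitly. Writing $M_{\bm{p}}=B_n B_{n-1}\cdots B_1$ with $B_i\colonequals M_1^{p_i}M_3^{p_i'}M_2^{q_i}$, a direct computation gives $B_i(x,y,z)^T=(x+p_iY_i,\,Y_i,\,z+p_i'Y_i)^T$ with $Y_i\colonequals y+q_i(x+z)$. Let $(x_k,y_k,z_k)$ denote the vector obtained after applying $B_k B_{k-1}\cdots B_1$ to $\bm{v}$, and set $X_k\colonequals x_k+z_k$. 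Then the pair $(X_k,y_k)$ evolves autonomously via
$$y_k=y_{k-1}+q_k X_{k-1}\quad\text{and}\quad X_k=X_{k-1}+(p_k+p_k')\,y_k,$$
starting from $(X_0,y_0)=(1,h_{\bm{p},0})$, while $x_k$ and $z_k$ separately satisfy the scalar recursions $x_k=x_{k-1}+p_k y_k$ and $z_k=z_{k-1}+p_k' y_k$, with $(x_0,z_0)=(s_{\bm{p}},1-s_{\bm{p}})$.

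Set $\lambda\colonequals 1/w_{\bm{p},n}$. Since the continued-fraction expansion of $h_{\bm{p},0}$ is purely periodic of period $2n$, the inner rectangle of dimensions $w_{\bm{p},n}\times h_{\bm{p},n}$ obtained after one full stripping cycle of $\mathrm{rect}(\bm{p})$ is similar to the original, giving $h_{\bm{p},n}/w_{\bm{p},n}=h_{\bm{p},0}$, equivalently $(1,h_{\bm{p},0})=\lambda(w_{\bm{p},n},h_{\bm{p},n})$. The next step is to prove by induction on $k=0,1,\ldots,n$ the identity
$$(X_k,\,y_k)=\lambda\,(w_{\bm{p},\,n-k},\,h_{\bm{p},\,n-k}).$$
The base case is the similarity equality just recorded, and the inductive step is a one-line verification using the defining recursions $h_{j-1}=h_j+q_{n-j+1}w_j$ and $w_{j-1}=w_j+(p_{n-j+1}+p_{n-j+1}')h_{j-1}$ taken with $j=n-k+1$. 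Setting $k=n$ gives $(X_n,y_n)=(\lambda,\lambda h_{\bm{p},0})$, already matching the middle coordinate of $M_{\bm{p}}\bm{v}=\lambda\bm{v}$.

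For the first and third coordinates, combining $x_k=x_{k-1}+p_k y_k$ with the identity $y_k=\lambda h_{\bm{p},n-k}$ from the previous step telescopes to
$$x_n=s_{\bm{p}}+\lambda\sum_{j=0}^{n-1}p_{n-j}\,h_{\bm{p},j}.$$
Extending the recursions for $w_{\bm{p},j},h_{\bm{p},j}$ past $j=n$ via the periodic indexing $p_{i+n}=p_i$, $p_{i+n}'=p_i'$, $q_{i+n}=q_i$ yields the self-similarity $h_{\bm{p},j+n}=h_{\bm{p},j}/\lambda$. Inserted into $s_{\bm{p}}=\sum_{i=0}^\infty p_{-i}h_{\bm{p},i}$, this collapses the series to a geometric one and gives $s_{\bm{p}}(1-1/\lambda)=\sum_{j=0}^{n-1}p_{n-j}h_{\bm{p},j}$, precisely the identity needed to conclude $x_n=\lambda s_{\bm{p}}$. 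The symmetric argument with $p_i'$ in place of $p_i$ gives $z_n=\lambda(1-s_{\bm{p}})$. Since $\bm{v}>\bm{0}$ and $M_{\bm{p}}$ is Perron-Frobenius by Lemma~\ref{lem_Perron-Frobenius-p}, Theorem~\ref{thm_Perron-Frobenius} identifies $\lambda$ and $\bm{v}$ with $\lambda_{\bm{p}}$ and its Perron-Frobenius eigenvector.

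The main obstacle is establishing the self-similarity $h_{\bm{p},j+n}=h_{\bm{p},j}/\lambda$ and $w_{\bm{p},j+n}=w_{\bm{p},j}/\lambda$: once it is in place, either via the uniqueness of the positive fixed point of the periodic Moebius transformation attached to the continued fraction, or via a direct geometric comparison of $\mathrm{rect}(\bm{p})$ to its inner rectangle scaled by $\lambda$, the remaining steps are straightforward bookkeeping.
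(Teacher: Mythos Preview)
Your argument is correct and takes a genuinely different route from the paper's proof. The paper applies the \emph{inverse} block $M_2^{-q_n}M_1^{-p_n}M_3^{-p_n'}$ to $\bm{v}_{\bm{p}}$ and, after rescaling by $\lambda_{\bm{p},0}=1/w_{\bm{p},1}$, identifies the result with the normalized eigenvector $\bm{v}_{T(\bm{p})}$ of the \emph{shifted} problem; iterating this once-around relation $n$ times (using $T^n=\mathrm{id}$) closes the loop. The geometric interpretation via $\mathrm{rect}(\bm{p})$ is invoked at each step. By contrast, you go forwards with $M_{\bm{p}}$, observe that the pair $(X_k,y_k)=(x_k+z_k,y_k)$ decouples from $(x_k,z_k)$ and matches the rectangle data $\lambda(w_{\bm{p},n-k},h_{\bm{p},n-k})$ exactly, and then recover $x_n,z_n$ from the geometric-series identity $s_{\bm{p}}(1-1/\lambda)=\sum_{j=0}^{n-1}p_{n-j}h_{\bm{p},j}$ (which is the content of Corollary~\ref{cor_split-ratio}). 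Both proofs ultimately rest on the self-similarity $h_{\bm{p},n}/w_{\bm{p},n}=h_{\bm{p},0}$ of the purely periodic continued fraction; the paper packages this as the passage $\mathrm{rect}(\bm{p})\to\mathrm{rect}(T(\bm{p}))$, whereas you use it as the scaling $h_{\bm{p},j+n}=w_{\bm{p},n}\,h_{\bm{p},j}$. Your approach is computationally more direct and avoids introducing $T$; the paper's approach has the advantage of producing the intermediate relation (\ref{equation_statement-1}) between $\bm{v}_{\bm{p}}$ and $\bm{v}_{T(\bm{p})}$, which is of independent interest.
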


We call  $\bm{v}= \bm{v}_{\bm{p}}$  the {\em normalized eigenvector} with respect to $\lambda_{\bm{p}}$.

\begin{proof}
Recall that $T\colon\mathbb{N}_0^{3n} \to \mathbb{N}_0^{3n}$ is the shift 
    as in Section~\ref{section_Introduction}. 
    For  $\bm{p} \in \mathcal{I}_n$ 
    we define scaling factors $\lambda_{\bm{p}, i} \colonequals w_{{\bm{p}},i} / w_{\bm{p}, i+1}$ for $i \in \mathbb{N}_0$. 
    The scaling factors fulfill the property $\prod_{i=0}^{n-1}\lambda_{\bm{p}, i} = 1/w_{\bm{p}, n}$. 
    We will prove
    \begin{equation}
    \label{equation_statement-1}
        \lambda_{\bm{p}, 0}M_2^{-q_n}M_1^{-p_n}M_3^{-p_n'}
       \left(\begin{smallmatrix}s_{\bm{p}} \\ h_{\bm{p}, 0} \\ 1-s_{\bm{p}}\end{smallmatrix}\right)
        = 
       \left(\begin{smallmatrix}s_{T(\bm{p})} \\ h_{T(\bm{p}), 0} \\ 1-s_{T(\bm{p})}\end{smallmatrix}\right). 
    \end{equation}
    Using this, we can then inductively deduce the following statement:
    \begin{equation}
      \label{equation_statement-2}
    (\prod_{i=0}^{n-1}\lambda_{\bm{p}, i} )   (M_1^{p_n}M_3^{p_n'}M_2^{q_n} \cdots M_1^{p_1}M_3^{p_1'}M_2^{q_1})^{-1}
   \left(\begin{smallmatrix}s_{\bm{p}} \\ h_{\bm{p}, 0} \\ 1-s_{\bm{p}}\end{smallmatrix}\right)
    = \left(\begin{smallmatrix}s_{T^n(\bm{p})} \\ h_{T^n(\bm{p}), 0} \\ 1-s_{T^n(\bm{p})}\end{smallmatrix}\right)
    = \left(\begin{smallmatrix}s_{\bm{p}} \\ h_{\bm{p}, 0} \\ 1-s_{\bm{p}}\end{smallmatrix}\right) 
    \end{equation}
    The definitions of $w_{\bm{p}, i}$ and $h_{\bm{p}, i}$ are such that they line up with the lengths of the line segments in 
    $\mathrm{rect}(\bm{p})$ as in Figure~\ref{fig_rectangle}. Adding up the widths of all the squares on the left side, we get $s_{\bm{p}} (=  \sum_{i = 0}^\infty p_{-i}h_{\bm{p},i})$.
    Using Figure~\ref{fig_rectangle}, we observe that for 
    \begin{align*}
        \left(\begin{smallmatrix}  y_1 \\ y_2 \\ y_3 \end{smallmatrix}\right)
        \colonequals M_2^{-q_n}M_1^{-p_n}M_3^{-p_n'} 
       \left(\begin{smallmatrix}s_{\bm{p}} \\ h_{\bm{p},0} \\ 1-s_{\bm{p}}\end{smallmatrix}\right)
        =  M_2^{-q_n}
        \left(\begin{smallmatrix}s_{\bm{p}, 0}-p_nh_{\bm{p}, 0} \\ h_{\bm{p}, 0} \\ 1-s_{\bm{p}} - p_n'h_{\bm{p}, 0}\end{smallmatrix}\right) 
        =  
        \left(\begin{smallmatrix}s_{\bm{p}}-p_nh_{\bm{p}, 0} \\ h_{\bm{p}, 0} - q_n(1-(p_n+p_n')h_{\bm{p}, 0}) \\ 1-s_{\bm{p}} - p_n'h_{\bm{p}, 0}\end{smallmatrix}\right), 
    \end{align*}
    we have $w_{\bm{p}, 1} = 1- (p_n+ p_n') h_{\bm{p}, 0}= y_1 + y_3$ and $h_{\bm{p}, 1} =h_{\bm{p},0} - q_nw_{\bm{p}, 1} = y_2$.

    Remove $(p_n+ p_n')$ squares with height $h_{\bm{p}, 0}$ and $q_n$ squares with height $w_ {\bm{p}, 1}$ from $\mathrm{rect}(\bm{p})$. 
    If we then scale the remaining small rectangle by $\lambda_{\bm{p}, 0}= 1/w_{\bm{p},1} $, 
    its width becomes $1$ and the rectangle becomes a partitioned rectangle.  By moving all squares to the left, we see that its height must be 
    $[0, \overline{p_{n-1}+ p_{n-1}', q_{n-1}, \dots, p_1+ p_1', q_1, p_n+ p_n', q_n}] = h_ {T(\bm{p}), 0}$. 
    Its partition into squares then tells us that the resulting partitioned rectangle is $\mathrm{rect}(T(\bm{p}))$. 
    The value $y_1$ is the sum of the widths of all squares 
    sitting on the left of the small rectangle. 
    When scaling up $y_1$ by $\lambda_{\bm{p}, 0}$, 
   the value $\lambda_{\bm{p}, 0}y_1$ continues to be the sum of square widths. 
   This shows $\lambda_{\bm{p}, 0} y_1 = s_{T(\boldsymbol{p})}$. (See Figure \ref{fig_rectangle}.) 
    This proves  statement~(\ref{equation_statement-1}). 
    
    Statement~(\ref{equation_statement-2}) follows from applying statement ~(\ref{equation_statement-1}) $n$ times. 
    The value $w_{\bm{p}, n}^{-1} = \prod_{i=0}^{n-1}\lambda_{\bm{p}, i}$ then becomes the eigenvalue of the eigenvector 
    $\left(\begin{smallmatrix}s_{\bm{p}} \\ h_{\bm{p}, 0} \\ 1-s_{\bm{p}}\end{smallmatrix}\right)$ 
    of $M_{\bm{p}}$. Because the vector entries are all positive and $M_{\bm{p}}$ is Perron-Frobenius, 
     $w_{\bm{p}, n}^{-1}$ must be the Perron-Frobenius eigenvalue $\lambda_{\bm{p}}$ 
     by Theorem~\ref{thm_Perron-Frobenius}. 
\end{proof}

\begin{cor}
\label{cor_split-ratio}
The splitting ratio $s_{\bm{p}}$ can be written as follows. 
$$s_{\bm{p}} = \sum_{i = 0}^\infty p_{-i}h_{\bm{p}, i} 
= \frac{p_nh_{\bm{p}, 0} + p_{n-1}h_{\bm{p}, 1}+ \cdots +p_1h_{\bm{p},n-1}}{(p_n+p_n')h_{\bm{p}, 0} + (p_{n-1}+p_{n+1}')h_{\bm{p},1}+ \cdots +(p_1+p_1')h_{\bm{p}, n-1}}.$$
\end{cor}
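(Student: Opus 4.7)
The plan is to exploit the $n$-fold self-similarity of the partitioned rectangle $\mathrm{rect}(\bm{p})$: since $T^n(\bm{p}) = \bm{p}$, the residual rectangle obtained after one full cycle of $n$ insertion-rounds is a rescaled copy of the initial one with scaling factor $w_{\bm{p},n}$. This will collapse the infinite series defining $s_{\bm{p}}$ to a geometric series whose ratio is $w_{\bm{p},n}$.

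First I would establish the scaling identity $h_{\bm{p},kn+j} = w_{\bm{p},n}^{\,k}\, h_{\bm{p},j}$ (and similarly for $w$) for every $k \geq 0$ and $0 \leq j < n$. The base case $h_{\bm{p},n} = w_{\bm{p},n}\, h_{\bm{p},0}$ is exactly the content of the rescaling step in the proof of Theorem~\ref{thm_expansion}: the geometric picture says that the small leftover rectangle of dimensions $w_{\bm{p},n} \times h_{\bm{p},n}$ is $\mathrm{rect}(T^n(\bm{p})) = \mathrm{rect}(\bm{p})$ scaled by $w_{\bm{p},n}$, so its height-to-width ratio must equal $h_{\bm{p},0}$. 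Induction on $k$ using the defining recursions (which are themselves periodic in the $p,p',q$ indices mod $n$) then propagates this to all $k$. Splitting the sum in blocks of length $n$, and using that $p_{-(kn+j)} = p_{n-j}$ because of the mod-$n$ convention, I obtain
\begin{equation*}
s_{\bm{p}} \;=\; \sum_{k=0}^\infty \sum_{j=0}^{n-1} p_{-(kn+j)}\, h_{\bm{p},kn+j} \;=\; \left(\sum_{k=0}^\infty w_{\bm{p},n}^{\,k}\right) \sum_{j=0}^{n-1} p_{n-j}\, h_{\bm{p},j} \;=\; \frac{\sum_{j=0}^{n-1} p_{n-j}\, h_{\bm{p},j}}{1 - w_{\bm{p},n}},
\end{equation*}
with convergence guaranteed by $0 < w_{\bm{p},n} < 1$.

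It remains to rewrite the denominator $1 - w_{\bm{p},n}$ as $\sum_{j=0}^{n-1} (p_{n-j} + p_{n-j}')\, h_{\bm{p},j}$. This is a direct telescoping: iterating $w_{\bm{p},j} - w_{\bm{p},j-1} = -(p_{n-j+1}+p_{n-j+1}')\,h_{\bm{p},j-1}$ from $j=1$ to $j=n$ and using $w_{\bm{p},0}=1$ gives the identity. Substituting back and reindexing from $j$ to $i=j$ yields precisely the claimed formula, matching $p_n, p_{n-1}, \dots, p_1$ in the numerator and $p_n+p_n', p_{n-1}+p_{n-1}',\dots, p_1+p_1'$ in the denominator. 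The only real obstacle is the careful bookkeeping in the scaling identity $h_{\bm{p},n+j} = w_{\bm{p},n}\,h_{\bm{p},j}$: one must make sure the mod-$n$ identification of the shifted indices is consistent, but once the geometric self-similarity of $\mathrm{rect}(\bm{p})$ is invoked, the remainder is a short series-and-telescope computation.
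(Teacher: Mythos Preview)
Your proof is correct and rests on the same key fact as the paper's---the self-similarity of $\mathrm{rect}(\bm{p})$ with scale factor $w_{\bm{p},n}=\lambda_{\bm{p}}^{-1}$---but you carry it out algebraically (geometric series plus telescoping) whereas the paper argues geometrically in one line: since the residual rectangle is a rescaled copy of $\mathrm{rect}(\bm{p})$, the left-to-total width ratio on the finite part (first $n$ rounds of squares) already equals $s_{\bm{p}}$, so one may simply ``ignore the width of the small self-similar rectangle.'' Your version has the virtue of making the identities $h_{\bm{p},kn+j}=w_{\bm{p},n}^{\,k}h_{\bm{p},j}$ and $1-w_{\bm{p},n}=\sum_{j=0}^{n-1}(p_{n-j}+p_{n-j}')h_{\bm{p},j}$ explicit, which is useful if one wants to avoid the picture; the paper's version is shorter but leans more on Figure~\ref{fig_rectangle}.
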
 

\begin{proof}
The split ratio $s_{\bm{p}}$ can be interpreted as a ratio dividing the width of the partitioned rectangle in two parts. 
Since the partitioned rectangle $\mathrm{rect}(\bm{p})$ is self-similar, it contains a 
rectangle that after rescaling by the factor $\lambda_{\bm{p}}$ is partitioned and equal to $\mathrm{rect}(\bm{p})$. 
To calculate $s_{\bm{p}}$, we can therefore ignore the width of the small self-similar rectangle and only use the ratio in the statement instead. 
\end{proof}

 \begin{rem}
 \label{rem_quadratic-irrational}
 For $\bm{p} \in \mathcal{I}_n$ 
 the height $h_{\bm{p}, 0} $ is a quadratic irrational 
 since the continued fraction expansion is eventually periodic. 
 One can prove inductively that the width $w_{\bm{p}, j}$ is a quadratic irrational for each $j \in {\mathbb N}_0$. 
 Thus $\lambda_{\bm{p}} = w_{\bm{p}, n}^{-1}$ 
 is also a quadratic irrational. 
 \end{rem}


\begin{cor}
\label{cor_same-dilatation}
Let $ \bm{p}=  (p_n, p_n', q_n, \dots, p_1, p_1', q_1)  \in \mathcal{I}_n $ and 
 $\bm{t}=  (t_n, t_n', u_n, \dots, t_1, t_1', u_1) \in \mathcal{I}_n$. 
If  $p_i+ p_i' = t_i+ t_i'$ and $q_i= u_i$ hold for all $i \in \{1, \dots, n\}$,  
then we have the following. 
\begin{enumerate}
\item[(1)] 
$\lambda_{\bm{p}}= \lambda_{\bm{t}}$. 

\item[(2)] 
If $\bm{t}= f(\bm{p})$, then 
$s_{\bm{p}}+ s_{f(\bm{p})} = 1$, 
where $f: {\mathbb N}_0^{3n}  \rightarrow {\mathbb N}_0^{3n} $ is the flip. 

\item[(3)] 
If $(p_n, p_{n-1}, \dots, p_1)  \prec (t_n, t_{n-1}, \dots, t_1) $, then 
$s_{\bm{p}}< s_{\bm{t}}$, 
where $\prec$ is the lexicographic ordering of ${\mathbb N}_0^n$. 
\end{enumerate}
\end{cor}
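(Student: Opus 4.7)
My plan is to exploit the observation that the recursions defining $w_{\bm{p},j}$ and $h_{\bm{p},j}$ depend only on the sums $p_i+p_i'$ and the values $q_i$, together with the closed form of $s_{\bm{p}}$ from Corollary~\ref{cor_split-ratio}. Under the hypothesis of the corollary this immediately gives $w_{\bm{p},j}=w_{\bm{t},j}$ and $h_{\bm{p},j}=h_{\bm{t},j}$ for every $j$. Part~(1) then follows from Theorem~\ref{thm_expansion}: $\lambda_{\bm{p}}=1/w_{\bm{p},n}=1/w_{\bm{t},n}=\lambda_{\bm{t}}$.

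For part~(2), the flip $f$ exchanges $p_i$ with $p_i'$ and fixes $q_i$, so the hypothesis of part~(1) is satisfied, and in particular $h_{f(\bm{p}),i}=h_{\bm{p},i}$. I would then apply Corollary~\ref{cor_split-ratio} to $\bm{p}$ and to $f(\bm{p})$: the two formulas share a common denominator, and adding the two numerators $\sum_{i} p_{n-i}\,h_{\bm{p},i}$ and $\sum_{i} p_{n-i}'\,h_{\bm{p},i}$ reproduces exactly that denominator, yielding $s_{\bm{p}}+s_{f(\bm{p})}=1$.

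The substantive statement is part~(3). Let $k$ be the largest index at which $p_k\neq t_k$, so $p_j=t_j$ (and hence $p_j'=t_j'$) for $j>k$, while $p_k<t_k$. Since all heights and the denominator are common to $\bm{p}$ and $\bm{t}$, the sign of $s_{\bm{t}}-s_{\bm{p}}$ agrees with the sign of
\[
(t_k-p_k)\,h_{\bm{p},n-k}+\sum_{j=1}^{k-1}(t_j-p_j)\,h_{\bm{p},n-j}.
\]
The leading term is at least $h_{\bm{p},n-k}$. I would bound the tail in absolute value by $\sum_{j=1}^{k-1}(p_j+p_j')\,h_{\bm{p},n-j}$, using the elementary inequality $|t_j-p_j|\le p_j+p_j'$ that follows from $0\le t_j\le p_j+p_j'$. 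Telescoping the recursion $w_{\bm{p},j-1}=(p_{n-j+1}+p_{n-j+1}')\,h_{\bm{p},j-1}+w_{\bm{p},j}$ from $j=n-k+2$ down to $j=n$ gives the clean identity
\[
\sum_{j=1}^{k-1}(p_j+p_j')\,h_{\bm{p},n-j}=w_{\bm{p},n-k+1}-w_{\bm{p},n}<w_{\bm{p},n-k+1}.
\]
Finally, from $h_{\bm{p},n-k}=q_k\,w_{\bm{p},n-k+1}+h_{\bm{p},n-k+1}$ and $q_k\ge 1$, I get $w_{\bm{p},n-k+1}<h_{\bm{p},n-k}$, so the leading term strictly dominates the tail, forcing $s_{\bm{p}}<s_{\bm{t}}$.

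The main obstacle I anticipate is precisely the tail bound in part~(3): one has to convert a recursion that alternates between widths and heights into a single telescoping sum and then compare that sum to a single height $h_{\bm{p},n-k}$. The positivity $q_k\ge 1$ built into the definition of $\mathcal{I}_n$, combined with the alternation producing $h_{\bm{p},n-k}=q_k w_{\bm{p},n-k+1}+h_{\bm{p},n-k+1}$, is what ultimately makes the estimate tight; without it the lex comparison would not force the sign.
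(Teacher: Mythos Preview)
Your proof is correct. Part~(1) matches the paper exactly. For parts~(2) and~(3) you argue algebraically via the closed form of Corollary~\ref{cor_split-ratio}, whereas the paper argues geometrically from the partitioned rectangle $\mathrm{rect}(\bm{p})$: for~(2) the flip $f$ literally mirrors the rectangle horizontally, sending the left width $s_{\bm{p}}$ to $1-s_{\bm{p}}$; for~(3) the paper simply records the key inequality $w_{\bm{p},i+1}<h_{\bm{p},i}$ and says the rectangle picture then gives the claim. Your telescoping identity $\sum_{j=1}^{k-1}(p_j+p_j')h_{\bm{p},n-j}=w_{\bm{p},n-k+1}-w_{\bm{p},n}$ together with $w_{\bm{p},n-k+1}<h_{\bm{p},n-k}$ is precisely the explicit form of that pictorial argument, so the two routes are the same in substance; yours has the advantage of being fully written out and not relying on the reader's geometric intuition. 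One small point you leave implicit: the strict inequality $w_{\bm{p},n-k+1}<h_{\bm{p},n-k}$ also needs $h_{\bm{p},n-k+1}>0$, which holds because the continued fraction is irrational and hence all heights in the rectangle are strictly positive.
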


\begin{proof}
Claim (1) follows from Theorem~\ref{thm_expansion} 
since $w_{\bm{p}, n}= w_{\bm{t}, n}$ holds for all $n \in \mathbb{N}_0$. 
Exchanging $p_i$ and $p_i'$ for all $i \in \{1, ..., n\}$, flips the partitioned rectangle $\mathrm{rect}(\bm{p})$ horizontally. 
This means that  $ s_{f(\bm{p})}= 1- s_{\bm{p}}$. The proof of  (2) is done. 
For each $\bm{p} \in \mathcal{I}_n$ and all $i \in {\mathbb N}_0$, we have the property $w_{{\bm{p}, }i+1}< h_{\bm{p}, i}$. 
Using the definition of the partitioned rectangle, this implies claim (3). 
\end{proof} 

For a vector $\bm{v}= (v_i) \in  {\mathbb R}^n$, 
we  denote by $\bm{v}|_i$ 
the $i$-th coordinate $v_i$ of $\bm{v}$. 
When $M$ is an $n$ by $n$ square matrix, 
we also use the symbol $M \bm{v}|_i$ 
which returns the $i$-th coordinate of the vector $M \bm{v} $. 

\begin{cor}
\label{cor_PF-eivenvector-13}
For $\bm{p} \in \mathcal{I}_n$ 
let $\bm{v}>\bm{0}$ be a Perron-Frobenius eigenvector of $M_{\bm{p}}$. 
Then  $\bm{v}|_1 = \bm{v}|_3$ holds if and only if $\bm{p}$ is symmetric. 
\end{cor}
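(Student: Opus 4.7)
The plan is to reduce the statement to a question about the split ratio $s_{\bm p}$ and then exploit the symmetry properties of $s_{\bm p}$ under the flip, which are already packaged in Corollary~\ref{cor_same-dilatation}.

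First, since Theorem~\ref{thm_Perron-Frobenius} guarantees that any Perron-Frobenius eigenvector $\bm v > \bm 0$ of $M_{\bm p}$ is a positive scalar multiple of the normalized eigenvector $\bm v_{\bm p} = \left(\begin{smallmatrix} s_{\bm p} \\ h_{\bm p, 0} \\ 1 - s_{\bm p} \end{smallmatrix}\right)$ given by Theorem~\ref{thm_expansion}, the condition $\bm v|_1 = \bm v|_3$ is equivalent to $\bm v_{\bm p}|_1 = \bm v_{\bm p}|_3$, i.e.\ to $s_{\bm p} = 1 - s_{\bm p}$, i.e.\ to $s_{\bm p} = \tfrac{1}{2}$. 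So the corollary reduces to showing
\[
s_{\bm p} = \tfrac{1}{2} \iff \bm p \text{ is symmetric.}
\]

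For the ($\Leftarrow$) direction, suppose $\bm p$ is symmetric, so $p_i = p_i'$ for all $i$. Then $f(\bm p) = \bm p$ where $f$ denotes the horizontal flip that exchanges $p_i$ with $p_i'$, and Corollary~\ref{cor_same-dilatation}(2) applied with $\bm t = f(\bm p) = \bm p$ gives $s_{\bm p} + s_{\bm p} = 1$, hence $s_{\bm p} = \tfrac 1 2$.

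For the ($\Rightarrow$) direction, I will prove the contrapositive. Suppose $\bm p$ is asymmetric, so the tuples $(p_n, p_{n-1}, \dots, p_1)$ and $(p_n', p_{n-1}', \dots, p_1')$ are distinct. Without loss of generality, one is strictly smaller than the other in the lexicographic ordering $\prec$; say $(p_n, \dots, p_1) \prec (p_n', \dots, p_1')$. Setting $\bm t \colonequals f(\bm p)$, we have $t_i = p_i'$ and $t_i' = p_i$, so $(t_n, \dots, t_1) = (p_n', \dots, p_1')$ and the pair $(\bm p, \bm t)$ satisfies the hypotheses of Corollary~\ref{cor_same-dilatation}(3). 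This yields $s_{\bm p} < s_{f(\bm p)}$. On the other hand, Corollary~\ref{cor_same-dilatation}(2) gives $s_{\bm p} + s_{f(\bm p)} = 1$, so $s_{f(\bm p)} = 1 - s_{\bm p}$, and the strict inequality becomes $s_{\bm p} < 1 - s_{\bm p}$, i.e.\ $s_{\bm p} < \tfrac 1 2 \neq \tfrac 1 2$. Hence $\bm v|_1 \neq \bm v|_3$, completing the proof.

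There is essentially no obstacle here: the two parts of Corollary~\ref{cor_same-dilatation} together with uniqueness of the Perron-Frobenius eigenvector do all the work. The only mildly subtle point is noticing that the lexicographic comparison needed for part (3) is automatically available once $\bm p \neq f(\bm p)$, since any two distinct tuples in $\mathbb N_0^n$ are strictly comparable under $\prec$.
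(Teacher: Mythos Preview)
Your proof is correct and follows essentially the same approach as the paper's: both reduce the question to showing $s_{\bm p} = \tfrac{1}{2}$ if and only if $\bm p$ is symmetric, and both deduce this from parts (2) and (3) of Corollary~\ref{cor_same-dilatation}. Your version is more explicit (in particular, you spell out the contrapositive and the lexicographic comparison), and you add the small but correct observation, via Theorem~\ref{thm_Perron-Frobenius}, that it suffices to work with the normalized eigenvector.
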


\begin{proof}
Corollary~\ref{cor_same-dilatation}(2)(3) implies that 
$s_{\bm{p}} =\frac{1}{2}$ holds if and only if $f(\bm{p}) = \bm{p}$ holds; i.e., $\bm{p}$ is symmetric. 
By Theorem~\ref{thm_expansion} 
the Perron-Frobenius eigenvector $\bm{v}= \bm{v}_{\bm{p}}$ satisfies the desired property. 
\end{proof}

\begin{ex}
Let us apply Theorem~\ref{thm_expansion} and Corollary~\ref{cor_split-ratio} to compute $s_{\bm{p}}$ 
and $\lambda_{\bm{p}}$. 
\begin{enumerate}
\item[(1)] 
Let $\bm{p}= (p, p', q) \in \mathcal{I}_1$. 
Then  $h_{\bm{p}, 0}= [0, \overline{p+p', q}\hspace{1mm}]$. 
We have 
$$s_{\bm{p}}=  \frac{p h_{\bm{p}, 0}}{(p+p') h_{\bm{p}, 0}}
=\frac{p}{p+p'},  \hspace{5mm} 
\lambda_{\bm{p}}= \frac{1}{w_{\bm{p}, 1}}=\frac{1}{1-(p+p') h_{\bm{p}, 0}}.$$

\item[(2)] 
Let  $\bm{p}= (1,0,1,0,1,1) \in \mathcal{I}_2$. 
We have 
$h_{\bm{p}, 0}=  [0, \overline{1}\hspace{1mm}]= \frac{-1+ \sqrt{5}}{2}$, 
$w_{\bm{p}, 1}= 1- h_{\bm{p}, 0}$, $h_{\bm{p}, 1}= h_{\bm{p}, 0}-w_{\bm{p}, 1}$, 
and  $w_{\bm{p}, 2}= w_{\bm{p}, 1}-h_{\bm{p}, 1}$. 
Hence,   $s_{\bm{p}}$ and  $\lambda_{\bm{p}}$ are given by 
$$s_{\bm{p}}= \frac{h_{\bm{p}, 0}}{h_{\bm{p}, 0}+ h_{\bm{p}, 1}}= \frac{h_{\bm{p}, 0}}{3h_{\bm{p}, 0}-1}, \hspace{5mm} 
\lambda_{\bm{p}}= \frac{1}{w_{\bm{p}, 2}}= \frac{1}{2-3h_{\bm{p}, 0}}=\frac{7+ 3\sqrt{5}}{2}.$$

\end{enumerate}
\end{ex}

By a calculation we have the following lemma. 

\begin{lem}
\label{lem_matrix-computation}
Let $q \in {\mathbb N}$ and $p, p' \in {\mathbb N}_0$. 
Let $\bm{x}= \left(\begin{smallmatrix} x \\ y \\ z \end{smallmatrix}\right)>\bm{0}$. 
\begin{enumerate}
\item[(1)]
$M_1^p M_3^{p}M_2^q \bm{x}|_1 \le M_1^p M_3^{p}M_2^q \bm{x}|_3 $ 
if and only if $x \le z$. 

\item[(2)] 
Suppose that  
$p> p' \ge 0$. 
Then  
$M_1^p M_3^{p'} M_2^q \bm{x}|_1 > M_1^p M_3^{p'} M_2^q \bm{x}|_3 $ 
for any $\bm{x}>\bm{0}$. 

\item[(3)] 
Suppose that $  0 \le p< p'$. 
Then  
$M_1^p M_3^{p'} M_2^q \bm{x}|_1 < M_1^p M_3^{p'} M_2^q \bm{x}|_3 $ 
for any $\bm{x}>\bm{0}$. 
\end{enumerate}
\end{lem}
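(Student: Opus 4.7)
The plan is to prove the lemma by direct computation, exploiting the simple structure of the three matrices. Because the powers $M_1^p$ and $M_3^{p'}$ are easy (each is just a shear), and $M_2^q$ has a simple closed form, I can explicitly write down the first and third coordinates of $M_1^p M_3^{p'} M_2^q \bm{x}$ and compare them.

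First I would record the formulas
\begin{equation*}
M_1^p = \begin{pmatrix} 1 & p & 0 \\ 0 & 1 & 0 \\ 0 & 0 & 1 \end{pmatrix}, \quad
M_3^{p'} = \begin{pmatrix} 1 & 0 & 0 \\ 0 & 1 & 0 \\ 0 & p' & 1 \end{pmatrix}, \quad
M_2^q \begin{pmatrix} x \\ y \\ z \end{pmatrix} = \begin{pmatrix} x \\ q(x+z)+y \\ z \end{pmatrix},
\end{equation*}
the last by an easy induction on $q$. Writing $y' \colonequals q(x+z)+y$, a short computation gives
\begin{equation*}
M_1^p M_3^{p'} M_2^q \begin{pmatrix} x \\ y \\ z \end{pmatrix} = \begin{pmatrix} x + p y' \\ y' \\ z + p' y' \end{pmatrix},
\end{equation*}
so the difference between the first and third coordinates equals $(x-z) + (p-p')\,y'$.

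For part (1) with $p=p'$ this difference is exactly $x-z$, which immediately yields the claimed biconditional. For parts (2) and (3) the key observation is that $y'$ is strictly larger than $|x-z|$: since $q\ge 1$ and $x,y,z>0$, we have $y' = q(x+z)+y \ge x+z+y$, which exceeds both $z-x$ and $x-z$. Hence if $p>p'$ then $(p-p')y' \ge y' > z - x$, forcing the difference to be positive; if $p<p'$ then $(p'-p)y' \ge y' > x - z$, forcing it to be negative. This gives parts (2) and (3).

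There is no real obstacle here: once the formula for $M_2^q$ acting on a column vector is written down, the lemma is a one-line inequality. The only point worth being careful about is that the $q\ge 1$ hypothesis is exactly what is needed to guarantee $y' \ge x+z+y$ and hence to dominate $|x-z|$; if $q$ were allowed to be $0$, parts (2) and (3) would reduce to the same comparison as part (1) and would fail in general.
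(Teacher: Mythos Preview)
Your proof is correct and follows exactly the computation the paper has in mind: the paper states the lemma with only the phrase ``By a calculation we have the following lemma,'' and the very same formulas you derive---$M_2^q\bm{x}=\left(\begin{smallmatrix}x\\qx+y+qz\\z\end{smallmatrix}\right)$ and $M_1^pM_3^{p'}M_2^q\bm{x}=\left(\begin{smallmatrix}x+py'\\y'\\p'y'+z\end{smallmatrix}\right)$---appear explicitly in the proof of Lemma~\ref{lem_step-maximal-splitting}. Your observation that $q\ge 1$ is precisely what makes $y'>|x-z|$ is the only nontrivial point, and you handle it cleanly.
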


As a corollary of Lemma~\ref{lem_matrix-computation}, 
we immediately have the following result. 

\begin{cor}
\label{cor_matrix-computation}
If $\bm{x}= \left(\begin{smallmatrix} x \\ y \\ z \end{smallmatrix}\right)$ is a positive vector with $x \ne z$, then 
$ M_1^p M_3^{p'} M_2^q \bm{x}|_1 \ne  M_1^p M_3^{p'} M_2^q \bm{x}|_3$ 
for any $q \in {\mathbb N}$ and $p, p' \in {\mathbb N}_0$ (possibly $p= p')$. 
\end{cor}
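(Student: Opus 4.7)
The plan is a straightforward case analysis on the relation between $p$ and $p'$, reducing everything to Lemma~\ref{lem_matrix-computation}.

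When $p > p'$, part~(2) of the lemma gives $M_1^p M_3^{p'} M_2^q \bm{x}|_1 > M_1^p M_3^{p'} M_2^q \bm{x}|_3$, so the two coordinates are strictly unequal. The case $p < p'$ is handled symmetrically via part~(3).

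The remaining case $p = p'$ is the only one requiring a small additional observation, because part~(1) of the lemma is only a \emph{non-strict} biconditional
\[
M_1^p M_3^{p}M_2^q \bm{x}|_1 \le M_1^p M_3^{p}M_2^q \bm{x}|_3 \iff x \le z.
\]
I would upgrade this to the equality version $M_1^p M_3^{p}M_2^q \bm{x}|_1 = M_1^p M_3^{p}M_2^q \bm{x}|_3 \iff x = z$ by a symmetry argument. Let $P$ be the permutation matrix swapping the first and third coordinates. A direct check gives $PM_1 P = M_3$, $PM_3 P = M_1$, and $PM_2 P = M_2$; combined with the commutativity $M_1 M_3 = M_3 M_1$ recorded in Section~\ref{section_Introduction}, this yields the conjugation identity $P M_1^p M_3^p M_2^q P = M_1^p M_3^p M_2^q$. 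Applying Lemma~\ref{lem_matrix-computation}(1) to the swapped vector $\bm{x}' = P\bm{x} = (z, y, x)^T$ and translating back through $P$ then produces the dual biconditional $M_1^p M_3^{p}M_2^q \bm{x}|_1 \ge M_1^p M_3^{p}M_2^q \bm{x}|_3 \iff x \ge z$. Intersecting the two biconditionals gives precisely the equality criterion $M_1^p M_3^{p}M_2^q \bm{x}|_1 = M_1^p M_3^{p}M_2^q \bm{x}|_3 \iff x = z$, and the hypothesis $x \ne z$ forces the two coordinates to differ.

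I do not anticipate any substantive obstacle: the corollary is essentially a uniform repackaging of Lemma~\ref{lem_matrix-computation} in terms of non-equality, and the only subtle point is the $P$-symmetry of $M_1^p M_3^p M_2^q$ needed to convert the lemma's non-strict biconditional in the $p=p'$ case into the desired strict statement.
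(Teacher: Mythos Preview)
Your proposal is correct and follows the same approach as the paper, which simply declares the corollary an immediate consequence of Lemma~\ref{lem_matrix-computation} without spelling out details. Your treatment of the $p=p'$ case via the $P$-symmetry is a careful justification of the step the paper leaves implicit; alternatively, one could note directly from the computation underlying Lemma~\ref{lem_matrix-computation}(1) that the first and third coordinates of $M_1^p M_3^p M_2^q \bm{x}$ differ exactly by $x-z$.
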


\subsection{Pseudo-Anosov maps in the semigroup \texorpdfstring{$F_D= F(\sigma_1, \sigma_3, \sigma_2^{-1})$}{Lg}} 
\label{subsection_semigroup_1}

We write $h_1 = \sigma_1$, $h_3 = \sigma_3$ and $h_2= \sigma_2^{-1}$. 
For a map $h = h_{n_k} \cdots h_{n_1} \in F_D$ ($n_i  \in \{1,2,3\} $) 
we set $M_h\colonequals  M_{n_k} \cdots M_{n_1}$. 
The following is a well-known result.

\begin{prop}
\label{prop_braid-Penner}
The product $h = h_{n_k} \cdots h_{n_1} \in F_D$ 
is pseudo-Anosov 
if all $\sigma_1, \sigma_3$ and $\sigma_2^{-1}$ appear in the product at least once. 
In this case 
the dilatation $\lambda(h)$ of $h$ equals the Perron-Frobenius eigenvalue $\lambda(M_h)$. 
\end{prop}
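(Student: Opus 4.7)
The plan is to combine Penner's semigroup construction on $\Sigma_{1,2}$ with the hyperelliptic covering, and then read off the dilatation from the train track carrying the stable lamination.

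First, I would establish the pseudo-Anosov property for lifts. The simple closed curves $c_1, c_3$ together with $c_2$ fill $\Sigma_{1,2}$, since their union cuts $\Sigma_{1,2}$ into disks and punctured disks (this is visible in Figure~\ref{fig_t_track}(1)(2)). By Penner's theorem \cite{Penner88}, any composition of right Dehn twists about $c_1, c_3$ and left Dehn twists about $c_2$ in which every generator appears at least once is pseudo-Anosov. Applied to $F_T$, this says that the product $H \colonequals h_{n_k}' \cdots h_{n_1}'$ (where $h_1' = \delta_1$, $h_2' = \delta_2^{-1}$, $h_3' = \delta_3$) is pseudo-Anosov whenever all three generators occur in the word defining $h$.

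Next I would transfer the conclusion to $\Sigma_{0,5}$ via the $2$-fold branched cover $\pi\colon\Sigma_{1,2}\to\Sigma_{0,5}$ induced by the hyperelliptic involution $\iota$. Each $\delta_i$ commutes with $\iota$ (since $c_i$ is invariant under $\iota$) and descends under $\pi$ to $\sigma_i$; likewise $\delta_2^{-1}$ descends to $\sigma_2^{-1}$. Hence the word-by-word lift $H$ of the given $h \in F_D$ is $\iota$-equivariant and satisfies $\pi \circ H = h \circ \pi$. Because $H$ is pseudo-Anosov on $\Sigma_{1,2}$, its invariant measured foliations are $\iota$-invariant (by uniqueness of the pA structure up to the Thurston normalizer, together with the fact that $\iota H \iota^{-1} = H$), so they descend to invariant measured foliations for $h$ with the same transverse dilatation. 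Therefore $h$ is pseudo-Anosov with $\lambda(h) = \lambda(H)$.

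Finally, I would identify $\lambda(h)$ with $\lambda(M_h)$ using the train track $\mathfrak{b}_L$ in Figure~\ref{fig_t_track}(4). The key observation is that $\mathfrak{b}_L$ is invariant (up to equivalence by splitting/folding/shifting) under each of $\sigma_1, \sigma_3, \sigma_2^{-1}$, and a direct check on the three labelled weights $(x,y,z)$ shows that the induced transition matrices on the transverse-measure cone are precisely $M_1, M_3, M_2$. Composing, the action of $h$ on $(x,y,z)$ is given by $M_h$, which is Perron-Frobenius by (an argument analogous to) Lemma~\ref{lem_Perron-Frobenius-p}. Since the projective fixed ray of $M_h$ in the positive cone carries the stable measured lamination of $h$, the dilatation $\lambda(h)$ equals the Perron-Frobenius eigenvalue $\lambda(M_h)$.

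The main obstacle I expect is the bookkeeping for the hyperelliptic descent: one must make sure that $H$ indeed commutes with $\iota$ word-by-word (not merely up to isotopy in a way that might introduce extra punctures-permuting correction), and that the train track $\mathfrak{b}$ upstairs projects to $\mathfrak{b}_L$ so that the transition matrices match on the nose rather than up to a change of coordinates. Both points are checked by direct inspection of Figure~\ref{fig_t_track}, but they are the only non-formal step in the argument.
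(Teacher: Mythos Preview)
Your argument is correct and reaches the conclusion, but it takes a genuinely different route from the paper's proof.

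The paper works entirely on $\Sigma_{0,5}$ via the Bestvina--Handel algorithm: it chooses a spine $G \subset \Sigma_{0,5}$ (Figure~\ref{fig_track_n}(2)), observes that each generator $h_i$ induces a graph map $g_i\colon G \to G$ with no backtracks and transition matrix $M_i$, and then shows by induction on word length that the composite $g_h$ also has no backtracks, hence is efficient. Since $M_h$ is Perron--Frobenius, the Bestvina--Handel criterion \cite{BestvinaHandel95} gives directly that $h$ is pseudo-Anosov with dilatation $\lambda(M_h)$.

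You instead lift to $\Sigma_{1,2}$, invoke Penner's theorem on the word in $\delta_1,\delta_3,\delta_2^{-1}$, and descend through the hyperelliptic quotient. The paper in fact uses the same covering in the \emph{opposite} direction in Lemma~\ref{lem_same-dilatation}, taking Proposition~\ref{prop_braid-Penner} as input on $\Sigma_{0,5}$ and lifting to obtain the invariant train track and dilatation of $\Phi_{\bm p}$ upstairs; your organization reverses this flow. One consequence is that your step~3 is slightly redundant: Penner's construction already identifies $\lambda(H)$ with the Perron--Frobenius eigenvalue of the transition matrix on $\mathfrak{b}$, and those matrices are $M_1,M_3,M_2$ by construction, so $\lambda(h)=\lambda(H)=\lambda(M_h)$ without any separate analysis of $\mathfrak{b}_L$. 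The paper's approach has the advantage of staying on one surface and producing the efficient graph map that later feeds into Remark~\ref{rem_suited-to}; your approach has the advantage of making the $F_T$/$F_D$ correspondence the starting point rather than a subsequent lemma.
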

For the convenience of the reader, we give an outline of the proof.  
We use a criterion by Bestvina-Handel algorithm \cite{BestvinaHandel95} to determine when a mapping class is pseudo-Anosov. 
We first  choose a finite graph $G \subset \Sigma_{0,5}$ 
that is homotopy equivalent to $\Sigma_{0,5}$ as in Figure~\ref{fig_track_n}(2).  
The graph $G$ has four vertices $1, \dots, 4$ and four loop edges, 
each of which encircles a puncture. 
Let $P$ be the set of four loop edges of $G$.

Given a mapping class $\psi \in \mathrm{MCG}(\Sigma_{0,5})$, 
one can pick an induced graph map $g: G \rightarrow G$ homotopic to $\psi$. 
We require that $g$ sends vertices to vertices, edges to edge paths and fulfills $g(P)= P$. 
(See \cite[Section~1]{BestvinaHandel95}.) 
We may suppose that $g$ has no backtracks; i.e., 
$g$ maps each oriented edge of $ G$ to an edge path which does not contain an oriented edge $e$ 
followed by the same edge $\overline{e}$ with the opposite orientation. 
This map $g$ 
defines a $3$ by $3$ {\em transition matrix} $M$ (with respect to the $3$ non-loop edges). 
 For $r, s \in \{1, 2 , 3\}$ the entry $M_{rs}$ is the number of times that the $g$-image of the $s$-th edge  
runs the $r$-th  edge in either direction. 
We say that 
$g: G \rightarrow G$ is  {\em efficient} if $g^n: G \rightarrow G$ has no backtracks for all $n  > 0$.

Notice that   $h_i$ for $i \in \{1,2,3\}$ 
 induces a graph map $g_i: G \rightarrow G$ which has no backtracks 
as shown  Figure~\ref{fig_track_n}(1)--(4). 
The transition matrix of $g_i$ is given by the matrix $M_i$ 
as   in Section~\ref{section_Introduction}.

\begin{figure}[htbp]
\begin{center}
\includegraphics[height=5.6cm]{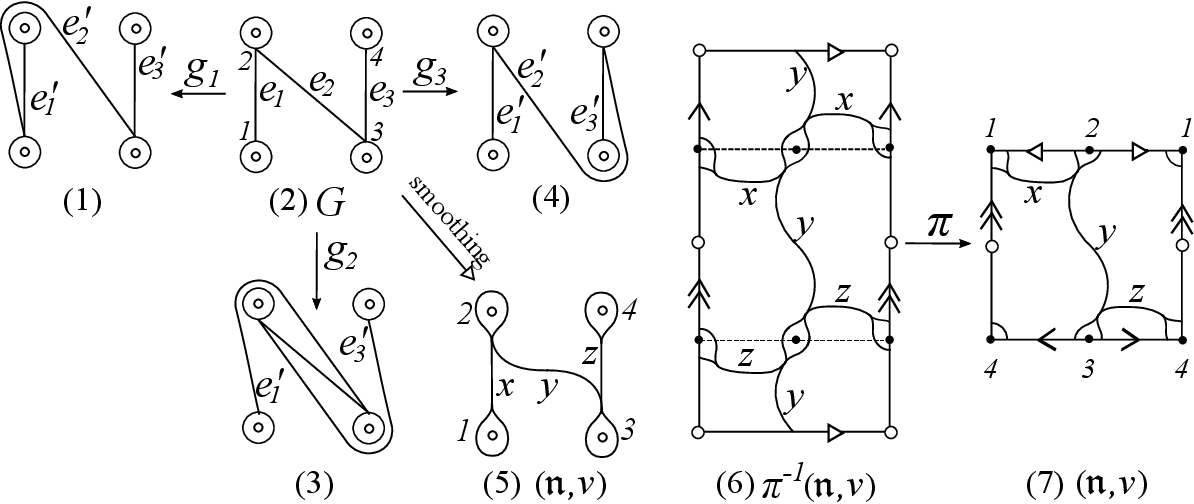}
\caption{
(1)--(4)  The graph maps $g_i: G \rightarrow G$. $e_j'\colonequals g_i(e_j)$. 
(5)  $(\mathfrak{n}, \bm{v}= \left(\begin{smallmatrix} x \\ y \\ z \end{smallmatrix}\right))$ in $\Sigma_{0,5}$. 
(6)(7)  The $2$-fold  branched cover 
$\pi \colon{\Sigma_{1,2}} \to \Sigma_{0,5}$. 
}
\label{fig_track_n}
\end{center}
\end{figure}

The composition  $ g_h: = g_{n_k} \cdots g_{n_1}: G \rightarrow G$ is an induced graph map of $h= h_{n_k} \cdots h_{n_1} $. 
(A priori, $g_h$ could have backtracks.) 
 We call  $k$  the length of the graph map $g_h$. 
 By induction on the length $k$, it can be shown that 
 $g_h: G \rightarrow G$ has no backtracks for any $h \in F_D$. 
 In particular, $g_h^n: G \rightarrow G$ has no backtracks for any $n >0$; i.e., $g_h: G \rightarrow G$ is efficient, 
 because $g_h^n$ is an induced graph map of $h^n \in F_D $. 
Since $ g_h: G \rightarrow G$ has no backtracks, 
the transition matrix with respect to the non-loop edges of $g_h$ is given by   $M_h$. 
If all $\sigma_1$, $\sigma_3$ and $\sigma_2^{-1}$ appear in the product $h$ at least once, then 
$M_h$ is Perron-Frobenius by Lemma~\ref{lem_Perron-Frobenius-p}. 
By the Bestvina-Handel algorithm \cite{BestvinaHandel95},
the two conditions ($g_h: G \rightarrow G$ is efficient,  and the transition matrix $M_h$ is Perron-Frobenius)  
ensure that $h$ is pseudo-Anosov with dilatation $\lambda(M_h)$.

\begin{remark}
\label{rem_suited-to}
Let $g_h: G \rightarrow G$ be an efficient graph map. We obtain a trivalent train track $\mathfrak{n}$ in $\Sigma_{0,5}$ (Figure~\ref{fig_track_n}(5)) by {\em graph smoothing} near the vertices of $G$.  
See \cite[Section~3.3]{BestvinaHandel95} for more details. 
Denote by $\bm{v}$ the Perron-Frobenius eigenvector  
of $M_ h$. We assign the weight $\bm{v}|_i$ (that is the $i$-th coordinate of $\bm{v}$) to the $i$-th branch and 
we obtain the measured train track  $(\mathfrak{n}, \bm{v})$ (also described in Figure~\ref{fig_track_n}(5)). 
This measured train track  $(\mathfrak{n}, \bm{v})$ 
is suited to the stable measured lamination of $h$ by \cite[Section~3.4]{BestvinaHandel95}. 
\end{remark}

\subsection{Pseudo-Anosov maps in the semigroup \texorpdfstring{$F_T= F(\delta_1, \delta_3, \delta_2^{-1})$}{Lg}}
\label{subsection_2-fold-branched-cover}

The union of curves $c_1 \cup c_2 \cup c_3$ (Figure~\ref{fig_t_track}(1)) fills the surface $\Sigma_{1,2}$. 
A construction of pseudo-Anosov maps by Penner \cite[Theorem~3.1]{Penner88} tells us that 
 the product of $\delta_1$, $\delta_3$ and $\delta_2^{-1}$  is pseudo-Anosov 
    if all the Dehn twists $\delta_1$, $\delta_3$ and $\delta_2^{-1}$ 
    appear in the product  at least once. 
Thus for each $\bm{p} \in \mathcal{I}_n$, 
the map $\Phi_{\bm{p}} \in F_T$ is pseudo-Anosov by the definition of  $\mathcal{I}_n$ (Definition~\ref{definition_In}).  
 The map 
$\phi_{\bm{p}} \in F_D$ is also pseudo-Anosov for each $\bm{p} \in \mathcal{I}_n$ 
by Proposition~\ref{prop_braid-Penner}.  
Additionally, each pseudo-Anosov map in $F_T$ (resp. $F_D$) is conjugate to 
$\Phi_{\boldsymbol{p}}$ (resp. $\phi_{\boldsymbol{p}}$) for some $\boldsymbol{p} \in \mathcal{I}_n$.
The link between the maps $\Phi_{\bm{p}}$ and $\phi_{\bm{p}}$ can be found in the following lemma.

\begin{lem}
\label{lem_same-dilatation}
For $\bm{p} \in \mathcal{I}_n$ let $\bm{v}>\bm{0}$ be an eigenvector 
for the Perron-Frobenius eigenvalue $\lambda_{\bm{p}}$ of  $M_{\bm{p}}$. 
Then  the measured train tracks 
$(\mathfrak{b}_L, \bm{v})$ in $\Sigma_ {0,5}$ and $(\mathfrak{b}, \bm{v})$ in $\Sigma_{1,2}$ 
defined in Section~\ref{section_Introduction} 
are suited to the stable measured laminations of  $\phi_{\bm{p}}  \in F_D$ and $\Phi_{\bm{p}} \in F_T$ respectively. 
Moreover, it holds 
$\lambda(\phi_{\bm{p}})= \lambda(\Phi_{\bm{p}})= \lambda_{\bm{p}}$, where $\lambda_{\bm{p}}$ is a quadratic irrational.
\end{lem}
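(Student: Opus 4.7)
The plan is to treat the $\phi_{\bm{p}}$ assertion directly via the Bestvina--Handel setup described in Section~\ref{subsection_semigroup_1} and then transport the conclusion up to $\Phi_{\bm{p}}$ through the hyperelliptic 2-fold branched cover $\pi \colon \Sigma_{1,2} \to \Sigma_{0,5}$.

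For $\phi_{\bm{p}}$, the induced graph map $g_{\phi_{\bm{p}}} = g_1^{p_n} g_3^{p_n'} g_2^{q_n} \cdots g_1^{p_1} g_3^{p_1'} g_2^{q_1}$ on the graph $G$ of Figure~\ref{fig_track_n}(2) has no backtracks by the induction on word length sketched after Proposition~\ref{prop_braid-Penner}, and hence is efficient. Its transition matrix is precisely $M_{\bm{p}}$, which is Perron--Frobenius by Lemma~\ref{lem_Perron-Frobenius-p}. Bestvina--Handel then certifies that $\phi_{\bm{p}}$ is pseudo-Anosov with dilatation $\lambda(M_{\bm{p}}) = \lambda_{\bm{p}}$. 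Applying graph smoothing to $g_{\phi_{\bm{p}}}$ at every vertex of $G$ produces a trivalent train track; by direct inspection, comparing Figure~\ref{fig_track_n}(5) against Figure~\ref{fig_t_track}(4), this smoothing is exactly $\mathfrak{b}_L$, and the weights dictated by the switch condition match those assigned from $\bm{v}$. Remark~\ref{rem_suited-to} therefore gives that $(\mathfrak{b}_L, \bm{v})$ is suited to the stable measured lamination of $\phi_{\bm{p}}$.

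For $\Phi_{\bm{p}}$, I would first record two figure-level facts about the cover $\pi$ depicted in Figure~\ref{fig_track_n}(6)(7): each half-twist $\sigma_i$ lifts to the Dehn twist $\delta_i$, so that $\pi \circ \Phi_{\bm{p}} = \phi_{\bm{p}} \circ \pi$, and $\pi^{-1}(\mathfrak{b}_L) = \mathfrak{b}$ as a train track whose branch weights pull back unchanged from those of $(\mathfrak{b}_L,\bm{v})$. Penner's construction \cite{Penner88} guarantees that $\Phi_{\bm{p}}$ is pseudo-Anosov. The pulled-back measured lamination $\pi^{-1}(\mathcal{L}^s_{\phi_{\bm{p}}}, \nu^s)$ is $\Phi_{\bm{p}}$-invariant with stretch factor $\lambda_{\bm{p}}$, so by uniqueness of the stable measured lamination of a pseudo-Anosov map it coincides with $(\mathcal{L}^s_{\Phi_{\bm{p}}}, \nu^s_{\Phi_{\bm{p}}})$. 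Pulling back along $\pi$ the suitedness map supplied by Definition~\ref{definition:suited} in the $\Sigma_{0,5}$ case then shows that $(\mathfrak{b}, \bm{v})$ is suited to the stable measured lamination of $\Phi_{\bm{p}}$, and we read off $\lambda(\Phi_{\bm{p}}) = \lambda(\phi_{\bm{p}}) = \lambda_{\bm{p}}$.

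The quadratic irrationality of $\lambda_{\bm{p}}$ is precisely Remark~\ref{rem_quadratic-irrational}. The main technical obstacle I anticipate is the figure-level bookkeeping: verifying that the Bestvina--Handel smoothing of $g_{\phi_{\bm{p}}}$ produces $\mathfrak{b}_L$ rather than a train track only equivalent to it, and checking that the preimage $\pi^{-1}(\mathfrak{b}_L)$ really is the train track $\mathfrak{b}$ drawn in Figure~\ref{fig_t_track}(3) with the matching weight assignment. These are unavoidable case checks but involve no conceptual surprise once the picture of the hyperelliptic involution is set up.
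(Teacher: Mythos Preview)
Your overall strategy---Bestvina--Handel for $\phi_{\bm{p}}$, then lift through the hyperelliptic cover for $\Phi_{\bm{p}}$---matches the paper exactly. However, the two ``figure-level facts'' you flag as potential obstacles are in fact false as stated, and the paper handles them with extra steps you are missing.

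First, the Bestvina--Handel smoothing of $G$ does \emph{not} produce $\mathfrak{b}_L$; it produces the train track $\mathfrak{n}$ of Figure~\ref{fig_track_n}(5), which is topologically distinct from $\mathfrak{b}_L$ (compare the loop structure around the punctures). Remark~\ref{rem_suited-to} gives that $(\mathfrak{n}, 2\bm{v})$ is suited to the stable lamination, and the paper then exhibits an explicit sequence of splittings (Figure~\ref{fig_six_tracks}) showing $(\mathfrak{n}, 2\bm{v})$ is equivalent to $(\mathfrak{b}_L, \bm{v})$. Only via this equivalence does $(\mathfrak{b}_L,\bm{v})$ inherit suitedness.

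Second, $\pi^{-1}(\mathfrak{b}_L)$ is not $\mathfrak{b}$. The paper instead lifts $\mathfrak{n}$: since each marked point $p_i$ is a $1$-pronged singularity of the stable foliation, its preimage $\pi^{-1}(p_i)$ is a regular ($2$-pronged) point, and consequently $\pi^{-1}(\mathfrak{n})$ carries four bigons, one around each $\pi^{-1}(p_i)$ (Figure~\ref{fig_track_n}(6)(7)). The train track $\mathfrak{b}$ is obtained only after collapsing these bigons. The same issue would arise if you lifted $\mathfrak{b}_L$ directly. So your claim that ``branch weights pull back unchanged'' to give $(\mathfrak{b},\bm{v})$ skips a genuine step.
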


\begin{proof}
By Remark~\ref{rem_suited-to} 
$(\mathfrak{n}, 2 \bm{v})$ is suited to the stable measured lamination of $\phi_{\bm{p}}$. 
Figure~\ref{fig_six_tracks} illustrates that $(\mathfrak{n}, 2 \bm{v})$ is equivalent to $(\mathfrak{b}_L, \bm{v})$. 
Therefore, $(\mathfrak{b}_L, \bm{v})$ is also suited to the stable measured lamination of $\phi_{\bm{p}}$.

\begin{figure}[ht]
\begin{center}
\includegraphics[height=3.2cm]{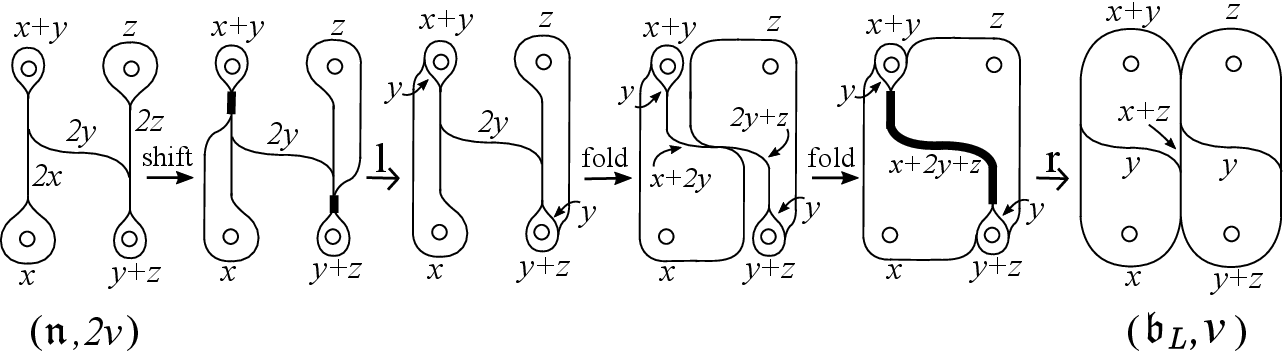}
\caption{
 $\stackrel{l}{\to}$ (resp. $\stackrel{r}{\to}$) denotes the left (resp. right) splittings at the highlighted large branches. 
 $(\mathfrak{n}, 2 \bm{v})$ is equivalent to $(\mathfrak{b}_L, \bm{v})$.}
\label{fig_six_tracks}
\end{center}
\end{figure}

We  regard  $\Sigma_{0,5}$ as the once punctured sphere 
with four marked points  $p_i$ ($i \in \{1, \dots, 4\}$). 
Consider a $2$-fold  branched cover 
$\pi \colon {\Sigma_{1,2}} \to \Sigma_{0,5}$ 
branched over the four marked points and 
induced by the hyperelliptic involution of $\Sigma_ {1,2}$, exchanging the two punctures.  
Notice that  $\delta_{j}\colonequals \delta_{c_j} \in \mathrm{MCG(\Sigma_{1,2})}$ 
is a lift of $\sigma_j \in \mathrm{MCG}(\Sigma_{0,5})$. Hence, 
$\Phi_{\bm{p}} \in F_T$ is a lift of $\phi_p \in F_D$. 
It follows that $\Phi_{\bm{p}}$  and $\phi_{\bm p}$ have the same dilatation. 
By Proposition~\ref{prop_braid-Penner} 
we have $\lambda(\phi_{\bm{p}})= \lambda_{\bm{p}}$. 
Thus $\lambda(\Phi_{\bm{p}})= \lambda(\phi_{\bm{p}})= \lambda_{\bm{p}}$. 
By Remark~\ref{rem_quadratic-irrational} $\lambda_{\bm{p}}$ is a quadratic irrational.

Let $\mathcal{F}^s$ and $\mathcal{F}^u$ be the stable and unstable foliations with respect to $\phi_{\bm{p}}$. 
The preimages $\pi^{-1}(\mathcal{F}^s)$ and $\pi^{-1}(\mathcal{F}^u)$ give the stable and unstable foliations with respect to $\Phi_{\bm{p}} $. 
Since  $p_i$ is a $1$-pronged singular point of $\mathcal{F}^s$ and $ \mathcal{F}^u$, 
the preimage $\pi^{-1}(p_i)$ is a regular point (i.e., a $2$-pronged point) 
of $\pi^{-1}(\mathcal{F}^s)$ and $ \pi^{-1}(\mathcal{F}^u)$.
Notice that $\pi^{-1}(\mathfrak{n})$ admits four bigons each of which contains a regular point $\pi^{-1}(p_i)$. 
See Figure~\ref{fig_track_n}(6)(7). 
Then  the measured train track $(\mathfrak{b}, \bm{v})$ in $\Sigma_{1,2}$ is obtained from $\pi^{-1}(\mathfrak{n},  \bm{v})$ 
by collapsing each bigon. 
As a result,  $(\mathfrak{b}, \bm{v})$ is suited to the stable measured lamination of $\Phi_{\bm{p}}$. 
\end{proof}

We will choose $(\mathfrak{b}, \lambda_{\bm{p}} \bm{v})$  
(resp. $(\mathfrak{b}_L, \lambda_{\bm{p}} \bm{v})$)  
as the start of the maximal splitting sequence  
in the proof of Theorem~\ref{thm_2-torus}  (resp. Theorem~\ref{thm_5-punctured-sphere-precise}).

\section{Agol cycles of pseudo-Anosov maps in  \texorpdfstring{$F_T$}{Lg}}
\label{section_FT}

The goal of this section is to prove Theorem~\ref{thm_2-torus}. 
To do this, we first  construct finite sequences of maximal splittings 
(Lemma~\ref{lem_step-maximal-splitting}, Proposition~\ref{prop_builidngblock}). 
Then  we concatenate some finite sequences to produce an Agol cycle of the pseudo-Anosov map $\Phi_{\bm{p}}$.

When $\bm{p}$ is symmetric, the normalized eigenvector $\bm{v}_{\bm{p}}$ with respect to $\lambda_{\bm{p}}$ fulfills 
$\bm{v}_{\bm{p}}|_1= \bm{v}_{\bm{p}}|_3$ (Corollary~\ref{cor_PF-eivenvector-13}). 
This extra symmetry gives simpler maximal splitting sequences. Hence, the measured train tracks with symmetric 
weights (i.e. $x=z$) and 
asymmetric weights (i.e. $x \ne z$) will be treated differently in the following lemma.

\begin{lem}
\label{lem_step-maximal-splitting}
Let $q \in {\mathbb N}$ and $p, p' \in {\mathbb N}_0$. 
Let  $\bm{x}= \left(\begin{smallmatrix} x \\ y \\ z \end{smallmatrix}\right) >\bm{0}$. 
\begin{enumerate}
\item[(1)] 
Suppose that $p > 0$. 
Then  
$$(\mathfrak{b}, M_1^{p-1} M_3^{p-1} M_2^q \bm{x}) = 
\left\{
\begin{array}{ll}
({\delta_1^{-1} \delta_3^{-1}} \circ {\rs}) (\mathfrak{b}, M_1^p M_3^p M_2^q \bm{x}) & \mbox{if}\ x = z,\\
(\delta_1^{-1} \delta_3^{-1} \circ {\rs^2}) (\mathfrak{b}, M_1^p M_3^p M_2^q \bm{x}) & \mbox{if}\ x \ne z.
\end{array}
\right.$$

\item[(2)] 
Suppose that $p > p' \ge 0$. 
Then  
$$(\mathfrak{b}, M_1^{p-1} M_3^{p'} M_2^q \bm{x}) = 
(\delta_1^{-1}  \circ \rs) (\mathfrak{b}, M_1^p M_3^{p'} M_2^q \bm{x}).$$

\item[(3)] 
Suppose that $0 \le p < p'$. 
Then  
$$(\mathfrak{b}, M_1^{p} M_3^{p'-1} M_2^q \bm{x}) = 
(\delta_3^{-1}  \circ \rs) (\mathfrak{b}, M_1^p M_3^{p'} M_2^q \bm{x}) .$$ 

\item[(4)] 
$(\mathfrak{b},  M_2^{q-1} \bm{x}) = 
\left\{
\begin{array}{ll}
({\delta_2} \circ {\ls^2}) (\mathfrak{b},  M_2^q \bm{x}) & \mbox{if}\ x = z,\\
({\delta_2} \circ {\ls^3}) (\mathfrak{b},  M_2^q \bm{x}) & \mbox{if}\ x \ne z.
\end{array}
\right.
$
\end{enumerate}
\end{lem}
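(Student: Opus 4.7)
The plan is to prove each of the four statements by a direct, case-by-case calculation on the train track $\mathfrak{b}$, exploiting the fact that $\mathfrak{b}$ carries the action of $\delta_1,\delta_3,\delta_2^{-1}$ through the transition matrices $M_1,M_3,M_2$.

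\textbf{First step (branch weights).} For any positive vector $\bm{w}=(w_1,w_2,w_3)^{T}$, the switch conditions applied at each trivalent vertex of $\mathfrak{b}$ in Figure~\ref{fig_t_track}(3) determine the weights on every branch as an integer-linear combination of $w_1,w_2,w_3$. I would first write this branch-weight dictionary out explicitly. Then, using
\[
M_2^q\bm{x}=\begin{pmatrix} x \\ qx+y+qz \\ z \end{pmatrix},\qquad
M_3^{p'}M_2^q\bm{x}=\begin{pmatrix} x \\ qx+y+qz \\ z+p'(qx+y+qz) \end{pmatrix},
\]
and the analogous expression for $M_1^pM_3^{p'}M_2^q\bm{x}$, I would record the numerical weight on every branch of $\mathfrak{b}$ for each of the train tracks appearing in (1)--(4).

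\textbf{Second step (locate the maximal large branches).} I would identify which branches of $\mathfrak{b}$ are large (in the sense of Figure~\ref{fig_split_shift}(1)) and compare their weights. For (1) with $x=z$ the two naturally-symmetric large branches coming from the $c_1$- and $c_3$-directions carry equal weight (both branches are interchanged by the symmetry that swaps the coordinates $1$ and $3$), and exactly these two are maximal; one $\rs$ therefore splits both. For (1) with $x\ne z$, Corollary~\ref{cor_matrix-computation} (applied to $M_1^pM_3^pM_2^q\bm{x}$) shows the two candidates differ, so only one is split in the first $\rs$; after that splitting a bookkeeping check shows the other becomes the unique maximal large branch, handled by a second $\rs$. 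Parts (2) and (3) are asymmetric from the start, so Lemma~\ref{lem_matrix-computation}(2)/(3) directly identifies a single maximal large branch split by one $\rs$. Part (4) proceeds identically, but with $M_2$ and left splittings replacing $M_1,M_3$ and right splittings: because $M_2^q\bm{x}$ produces three large branches adjacent to the $c_2$-direction, $\ls^2$ (in the $x=z$ case, two coincide) or $\ls^3$ (in the $x\ne z$ case) are required.

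\textbf{Third step (match with the Dehn-twist image).} After performing the prescribed splittings, I would compare the resulting measured train track with $\delta_1\delta_3(\mathfrak{b}, M_1^{p-1}M_3^{p-1}M_2^q\bm{x})$ for (1), with $\delta_1(\mathfrak{b}, M_1^{p-1}M_3^{p'}M_2^q\bm{x})$ for (2), etc. Since $\mathfrak{b}$ is $\delta_1,\delta_3,\delta_2^{-1}$-invariant with transition matrices $M_1,M_3,M_2$, both sides are measured train tracks on the same underlying graph $\mathfrak{b}$ (possibly up to an isotopy); the verification therefore reduces to checking that every branch weight on the LHS matches the corresponding one on the RHS, which is an arithmetic identity derivable from Step~1. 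For part (4), applying $\delta_2$ (not $\delta_2^{-1}$) lowers the power of $M_2$ because $M_2$ is the matrix of $\delta_2^{-1}$.

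\textbf{Main obstacle.} The bulk of the work is bookkeeping: in the asymmetric cases of (1) and (4), after the first splitting the branch structure changes and one must correctly re-identify the new maximal large branch. Drawing the train track after each splitting and carefully tracking weights via the switch condition will be essential. The second subtlety is the identification of the post-splitting train track with a Dehn-twist image of $\mathfrak{b}$: geometrically these should coincide because both carry the same measured lamination on the same complementary regions, but a clean recognition step (for example, pointing out that splittings invert certain foldings induced by the Dehn twists) will be needed to state the matching rigorously. Once this correspondence is made precise, each of (1)--(4) reduces to an elementary verification and the lemma follows.
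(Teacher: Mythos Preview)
Your proposal is correct and follows essentially the same strategy as the paper: compute the branch weights, locate the maximal large branches, carry out the indicated splittings, and then recognize the result as a Dehn-twist image of $(\mathfrak{b},\cdot)$. The paper's execution differs only in presentation---it first reduces to $q=1$ (since $M_2^q\bm{x}$ has the same first and third coordinates as $\bm{x}$) and then handles each case by an explicit figure rather than a purely numerical check, which is precisely the ``clean recognition step'' you flag as the main obstacle.
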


\begin{proof}
A calculation 
$M_2^q \bm{x}= 
\left(\begin{smallmatrix} x \\ qx+ y+ qz \\ z \end{smallmatrix}\right)$ 
shows that  $\bm{x}|_1= M_2^q \bm{x}|_1 = x$ and $\bm{x}|_3= M_2^q \bm{x}|_3 = z$. 
For the proof of claims (1)--(4),   
it is suffices to prove them for $q=1$. 
In fact, 
once we prove claims (1)--(4) for $q=1$, 
we can apply them  to the positive vector $\bm{x}'= M_2^{q-1} \bm{x}$. 

We have 
$M_1^p M_3^p M_2 \left(\begin{smallmatrix} x \\ y \\ z \end{smallmatrix}\right)  
= \left(\begin{smallmatrix} x+ py' \\ y' \\ py' + z \end{smallmatrix}\right)$, 
where $y'= x+ y + z$. 
The measured train track  $(\tau_0, \mu_0) \colonequals (\mathfrak{b}, M_1^p M_3^p M_2 \bm{x}) $ 
has two large branches with  weights $x+ (p+1)y'$ and $(p+1)y'+z$. 
We first consider the case $x \ne z$. 
We may suppose that $x<z$. 
Applying $2$  maximal splittings (see Figure~\ref{fig_right_split}), we obtain 
$2$ right maximal splittings 
$$(\tau_0, \mu_0)= (\mathfrak{b}, M_1^p M_3^p M_2 \bm{x}) \rs^2 (\tau_2, \mu_2)=   \delta_1 \delta_3 (\mathfrak{b}, M_1^{p-1} M_3^{p-1} M_2 \bm{x}).$$
In other words, 
$(\mathfrak{b}, M_1^{p-1} M_3^{p-1} M_2 \bm{x}) = ({\delta_1^{-1} \delta_3^{-1}}  \circ {\rs^2}) (\mathfrak{b}, M_1^p M_3^p M_2 \bm{x})$. 
This gives claim (1) when $x < z$.

\begin{figure}[ht]
\centering
\includegraphics[height=4.5cm]{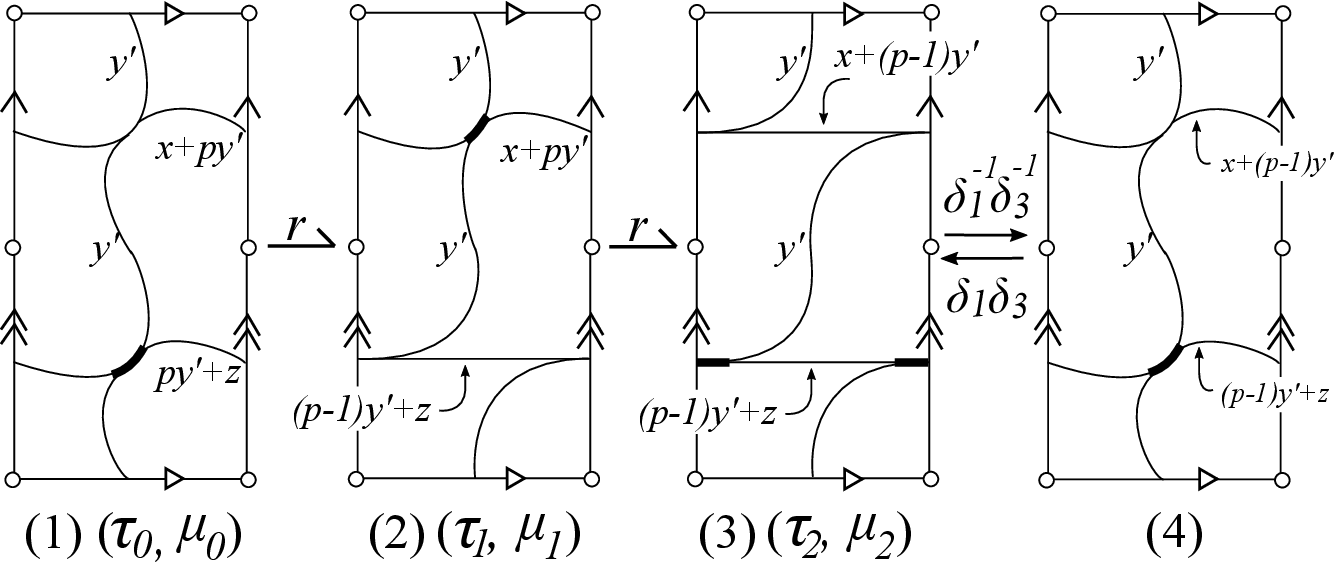}
\caption{
Proof of Lemma~\ref{lem_step-maximal-splitting}(1) 
when $x < z$. 
(1) $(\mathfrak{b}, M_1^p M_3^p M_2 \bm{x})$. 
(4) $(\mathfrak{b}, M_1^{p-1} M_3^{p-1} M_2 \bm{x})$. 
}
\label{fig_right_split}
\end{figure}

In the case $x = z$, 
 the two large branches of the measured train track $(\mathfrak{b}, M_1^p M_3^p M_2 \bm{x}) $  
have the same maximal weight. 
Applying the maximal splitting, 
we obtain the right maximal splitting 
$$(\tau_0, \mu_0)= (\mathfrak{b}, M_1^p M_3^p M_2 \bm{x}) \rs (\tau_1, \mu_1)=  \delta_1 \delta_3 (\mathfrak{b}, M_1^{p-1} M_3^{p-1} M_2 \bm{x}).$$
This completes the proof of claim (1).

We turn to claim (2). 
Suppose that $p> p' \ge 0$. 
We have 
$M_1^p M_3^{p'} M_2 \left(\begin{smallmatrix} x \\ y \\ z \end{smallmatrix}\right) 
= \left(\begin{smallmatrix} x+ py' \\ y' \\ p'y' + z \end{smallmatrix}\right)$, 
where $y' = x + y + z$. 
By a calculation we have 
$M_1^p M_3^{p'} M_2 \bm{x}|_1 =  x+py' > M_1^p M_3^{p'} M_2 \bm{x}|_3 = p'y'+z$.  
The measured train track $(\tau_0, \mu_0)\colonequals (\mathfrak{b}, M_1^p M_3^{p'}M_2 \bm{x})$ has  two large branches with  weights 
$x+ (p+1)y'$ and $(p'+1)y'+z$. 
Applying 
a maximal splitting (see Figure~\ref{fig_right_split_d1}), we obtain a right maximal splitting  
$$(\tau_0, \mu_0) =(\mathfrak{b}, M_1^p M_3^{p'} M_2 \bm{x}) \rs (\tau_1, \mu_1) = \delta_1 (\mathfrak{b}, M_1^{p-1} M_3^{p'} M_2 \bm{x}). $$
The proof of claim (2) is done. 
One can prove claim (3) in a similar way. 
\begin{figure}[ht]
\centering
\includegraphics[height=4.5cm]{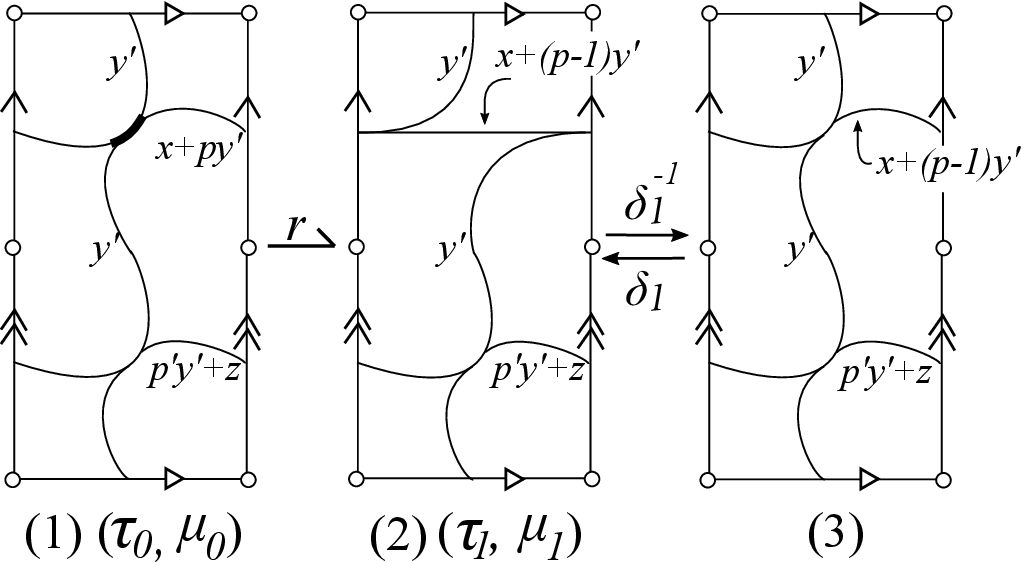}
\caption{
Proof of Lemma~\ref{lem_step-maximal-splitting}(2). 
(1) $(\mathfrak{b}, M_1^p M_3^{p'} M_2 \bm{x})$. 
(3) $(\mathfrak{b}, M_1^{p-1} M_3^{p'} M_2 \bm{x})$.}
\label{fig_right_split_d1}
\end{figure}

We now prove claim (4). 
We set $(\tau_0, \mu_0)= (\mathfrak{b}, M_2 \bm{x}=  \left(\begin{smallmatrix} x \\ x+y+z \\ z \end{smallmatrix}\right) )$. 
Consider the case $x \ne z$. 
We may suppose that  $x < z$. 
Applying $3$ 
maximal splittings  (see Figure~\ref{fig_left_split}), we obtain 3 left maximal splittings  
$$(\tau_0, \mu_0) = (\mathfrak{b}, M_2 \bm{x}) \ls (\tau_1, \mu_1) \ls (\tau_2, \mu_2) \ls  (\tau_3, \mu_3) = \delta_2^{-1} (\mathfrak{b},  \bm{x}).$$ 
This gives claim (4) for $x  < z$.

\begin{figure}[ht]
\centering
\includegraphics[height=4.5cm]{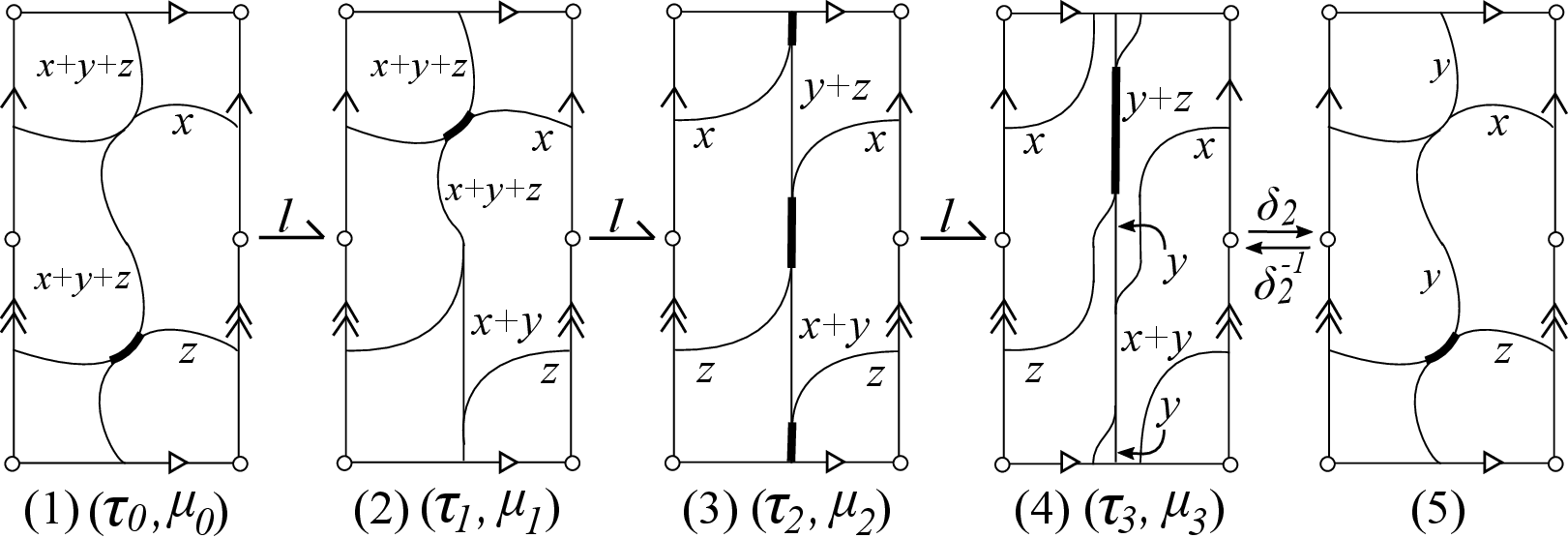}
\caption{
Proof of Lemma~\ref{lem_step-maximal-splitting}(4) when $x < z$. 
(1) $(\mathfrak{b},  M_2 \bm{x})$. 
(5) $(\mathfrak{b},  \bm{x})$.} 
\label{fig_left_split}
\end{figure}

In the case $x = z$, 
the two large branches of $(\mathfrak{b},  M_2 \bm{x})$ 
have the same maximal weight. 
Applying 
$2$ maximal splittings, we obtain $2$ left maximal splittings 
$ (\mathfrak{b},  M_2 \bm{x}) \ls^2   \delta_2^{-1} (\mathfrak{b}, \bm{x})$. 
This completes the proof. 
\end{proof}

\begin{prop}
\label{prop_builidngblock}
Let $q \in {\mathbb N}$ and $p, p' \in {\mathbb N}_0$.  
Let $\bm{x}=\left(\begin{smallmatrix} x \\ y \\ z \end{smallmatrix}\right)>\bm{0}$. 
\begin{enumerate}
\item[(1)] 
(Symmetric case.) 
Suppose that $p>0$. Then  
$$(\mathfrak{b}, \bm{x}) = (\delta_2^q \delta_1^{-p} \delta_3^{-p} \circ \ls^{2q} \circ \rs^{p}) (\mathfrak{b}, M_1^p M_3^{p} M_2^q {\bm x}) \hspace{2mm} \mbox{if}\  x=z.$$

\item[(2)] 
(Asymmetric case.) 
Suppose that  $p+ p'>0$ (possibly $p= p'>0$).  Then  
$$(\mathfrak{b}, \bm{x}) = (\delta_2^q \delta_1^{-p} \delta_3^{-p'} \circ \ls^{3q} \circ \rs^{p+p'}) (\mathfrak{b}, M_1^p M_3^{p'} M_2^q {\bm x}) \hspace{2mm} \mbox{if}\  x \ne z.$$
\end{enumerate}
\end{prop}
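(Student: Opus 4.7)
The plan is to iterate Lemma~\ref{lem_step-maximal-splitting} and then use Lemma~\ref{lem_commute} to pull the accumulated mapping classes outside the splitting operations, inverting the resulting identity at the end. The key point that legitimizes iteration is that the hypothesis ``$x=z$'' or ``$x\ne z$'' in Lemma~\ref{lem_step-maximal-splitting} constrains the fixed original vector $\bm{x}$ (not any intermediate weight), so after one application we may immediately re-apply the same statement with smaller exponents on $M_1$, $M_3$, or $M_2$.

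For claim (1), I would iterate Lemma~\ref{lem_step-maximal-splitting}(1) (with $x=z$) exactly $p$ times. At each step the $\delta_1\delta_3$ that appears can be commuted past the remaining $\rs$'s via Lemma~\ref{lem_commute}, and since $\delta_1$ and $\delta_3$ commute (disjoint supports), these accumulate to give
\[
\rs^p (\mathfrak{b},\, M_1^p M_3^p M_2^q \bm{x}) \;=\; \delta_1^p \delta_3^p\, (\mathfrak{b},\, M_2^q \bm{x}).
\]
Then iterate Lemma~\ref{lem_step-maximal-splitting}(4) (still with $x=z$) exactly $q$ times to obtain $\ls^{2q}(\mathfrak{b},\,M_2^q \bm{x}) = \delta_2^{-q}(\mathfrak{b},\,\bm{x})$. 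Composing via Lemma~\ref{lem_commute} and rearranging (using $\delta_1\delta_3=\delta_3\delta_1$) yields claim (1).

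For claim (2), I would split into subcases according to the ordering of $p$ and $p'$. If $p>p'$, first apply Lemma~\ref{lem_step-maximal-splitting}(2) exactly $p-p'$ times, each step contributing one right splitting and one factor of $\delta_1$, reducing $M_1^p M_3^{p'}$ to $M_1^{p'} M_3^{p'}$; then, if $p'>0$, apply Lemma~\ref{lem_step-maximal-splitting}(1) with $x\ne z$ exactly $p'$ more times, each contributing two right splittings and a $\delta_1\delta_3$. The total right-splitting count is $(p-p')+2p' = p+p'$, and the accumulated mapping class is $\delta_1^{p-p'}\cdot \delta_1^{p'}\delta_3^{p'} = \delta_1^{p}\delta_3^{p'}$. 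If $p<p'$ the symmetric argument using Lemma~\ref{lem_step-maximal-splitting}(3) gives the same totals, and if $p=p'>0$ one applies Lemma~\ref{lem_step-maximal-splitting}(1) with $x\ne z$ directly $p$ times. In every subcase one lands at $\delta_1^p\delta_3^{p'}(\mathfrak{b},\,M_2^q\bm{x})$. Finally, iterating Lemma~\ref{lem_step-maximal-splitting}(4) with $x\ne z$ exactly $q$ times gives $\ls^{3q}(\mathfrak{b},\,M_2^q\bm{x}) = \delta_2^{-q}(\mathfrak{b},\,\bm{x})$, and Lemma~\ref{lem_commute} plus inversion produces claim (2).

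The main obstacle is the three-way case analysis inside claim (2): one must check in each subcase that the arithmetic of right-splittings collapses to the common value $p+p'$ and that the accumulated Dehn-twist factor collapses to $\delta_1^p \delta_3^{p'}$. This is mechanical once the pattern is identified, but requires care in tracking the interaction between Lemma~\ref{lem_step-maximal-splitting}(2)/(3) (which can be applied so long as the $M_1$- and $M_3$-exponents differ) and Lemma~\ref{lem_step-maximal-splitting}(1) (which applies once the two exponents coincide). Everything else is a direct iteration combined with the commute lemma.
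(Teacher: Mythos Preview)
Your proposal is correct and follows essentially the same approach as the paper: iterate Lemma~\ref{lem_step-maximal-splitting} to peel off the $M_1,M_3$-exponents via right splittings and the $M_2$-exponent via left splittings, using Lemma~\ref{lem_commute} to collect the Dehn twists. The only cosmetic difference is that the paper first establishes the subcase $p=p'$ of claim~(2) and then reduces the general case to it via Lemma~\ref{lem_step-maximal-splitting}(2)/(3), whereas you treat the three subcases in parallel; the arithmetic and the use of the commute lemma are identical.
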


\begin{proof} 
We first prove claim (2) in the special case $p= p'>0$. 
Applying Lemma~\ref{lem_step-maximal-splitting}(1) in the latter case $x \ne z$, we have 
\begin{equation}
\label{equation_first-step}
(\mathfrak{b}, M_1^{p-1} M_3^{p-1} M_2^q \bm{x}) = ((\delta_1 \delta_3)^{-1} \circ \rs^2) (\mathfrak{b}, M_1^p M_3^p M_2^q \bm{x}).
\end{equation}
Then  applying Lemma~\ref{lem_step-maximal-splitting}(1) again, we obtain 
\begin{eqnarray*}
(\mathfrak{b}, M_1^{p-2} M_3^{p-2} M_2^q \bm{x}) &= &((\delta_1 \delta_3)^{-1} \circ \rs^2) (\mathfrak{b}, M_1^{p-1} M_3^{p-1} M_2^q \bm{x})
\\
&=& ((\delta_1 \delta_3)^{-1} \circ \rs^2) \circ ((\delta_1 \delta_3)^{-1} \circ \rs^2) (\mathfrak{b}, M_1^p M_3^{p} M_2^q {\bm x}) \hspace{5mm} (\because (\ref{equation_first-step})) 
\\
&=& ((\delta_1 \delta_3)^{-2} \circ \rs^4) (\mathfrak{b}, M_1^p M_3^{p} M_2^q {\bm x}). 
\hspace{5mm} (\because \mbox{Lemma~\ref{lem_commute}}).
\end{eqnarray*}
Repeating this argument, we have 
\begin{equation}
\label{equation_r-splitting}
(\mathfrak{b}, M_2^q \bm{x}) = ((\delta_1 \delta_3)^{-p} \circ \rs^{2p}) (\mathfrak{b}, M_1^p M_3^p M_2^q \bm{x}).
\end{equation}
Applying Lemma~\ref{lem_step-maximal-splitting}(4) in the case $x \ne z$ repeatedly, 
we have  
\begin{equation}
\label{equation_l-splitting}
(\mathfrak{b}, \bm{x}) = (\delta_2^q \circ \ls^{3q})  (\mathfrak{b}, M_2^q \bm{x}).
\end{equation}
The above equalities (\ref{equation_r-splitting}) and (\ref{equation_l-splitting}) give us 
\begin{eqnarray*}
(\mathfrak{b}, \bm{x}) 
&=& (\delta_2^q \circ \ls^{3q})  (\mathfrak{b}, M_2^q \bm{x}) \hspace{5mm} (\because (\ref{equation_l-splitting})) 
\\
&=& (\delta_2^q \circ \ls^{3q})  \circ  ((\delta_1 \delta_3)^{-p} \circ \rs^{2p}) (\mathfrak{b}, M_1^p M_3^{p} M_2^q \bm{x}) \hspace{5mm} (\because (\ref{equation_r-splitting})) 
\\
&=&  (\delta_2^q \delta_1^{-p} \delta_3^{-p}  \circ \ls^{3q}  \circ \rs^{2p}) (\mathfrak{b}, M_1^p M_3^{p} M_2^q \bm{x}). \hspace{5mm} 
(\because \mbox{Lemma~\ref{lem_commute}},\    \delta_1 \delta_3 = \delta_3 \delta_1) 
\end{eqnarray*}
This is the desired equality in the case $p= p'$. 
Next we prove claim (2) in the general case. 
We may suppose that  $0 \le p < p'$. 
Applying Lemma~\ref{lem_step-maximal-splitting}(3) repeatedly, 
we have  
\begin{equation}
\label{equation_r-splitting-sigma3}
(\mathfrak{b}, M_1^p M_3^p M_2^q \bm{x}) = ( \delta_3^{-(p'-p)} \circ \rs^{p'-p}) (\mathfrak{b}, M_1^p M_3^{p'} M_2^q \bm{x}).
\end{equation}
This together with the equalities (\ref{equation_r-splitting}) and (\ref{equation_l-splitting}) implies that 
\begin{eqnarray*}
(\mathfrak{b}, \bm{x}) 
&=& (\delta_2^q \circ \ls^{3q})  (\mathfrak{b}, M_2^q \bm{x}) \hspace{5mm} (\because (\ref{equation_l-splitting})) 
\\
&=& 
(\delta_2^q \circ \ls^{3q})  \circ  ((\delta_1 \delta_3)^{-p} \circ \rs^{2p}) (\mathfrak{b}, M_1^p M_3^{p} M_2^q \bm{x}) \hspace{5mm} (\because (\ref{equation_r-splitting})) 
\\
&=& 
(\delta_2^q \circ \ls^{3q})  \circ  ((\delta_1 \delta_3)^{-p} \circ \rs^{2p}) \circ ( \delta_3^{-(p'-p)} \circ \rs^{p'-p}) (\mathfrak{b}, M_1^p M_3^{p'} M_2^q \bm{x}) 
\hspace{5mm} (\because (\ref{equation_r-splitting-sigma3}))
\\
&=& 
 (\delta_2^q \delta_1^{-p} \delta_3^{-p'} \circ \ls^{3q} \circ \rs^{p+p'}) (\mathfrak{b}, M_1^p M_3^{p'} M_2^q {\bm x}) 
\hspace{5mm}(\because \mbox{Lemma~\ref{lem_commute}}).
\end{eqnarray*}
The proof of claim (2) is done. 
For the proof of claim (1), 
we assume $x=z$ and use Lemma~\ref{lem_step-maximal-splitting}(1)(4). 
This completes the proof. 
\end{proof}

We are ready to prove Theorem~\ref{thm_2-torus}.

\begin{proof}[Proof of Theorem~\ref{thm_2-torus}]
Let $M_{\bm{p}}= M_1^{p_n} M_3^{p_n'} M_2^{q_n} \cdots M_1^{p_1} M_3^{p_1'} M_2^{q_1}$ 
be the Perron-Frobenius matrix associated with $\bm{p} \in \mathcal{I}_n$. 
For a Perron-Frobenius eigenvector $\bm{v}$ of $M_ {\bm{p}}$, 
we define positive vectors 
$\bm{x}^{(0)} \colonequals  \bm{v}$ and 
$\bm{x}^{(i)} \colonequals  M_1^{p_i} M_3^{p_i'} M_2^{q_i} \bm{x}^{(i-1)} $ for $i \in \{1, \dots, n\}$. 
Then  $\bm{x}^{(n)} = M_{\bm{p}} \bm{v}= \lambda_{\bm{p}} \bm{v} $.

Suppose that $\bm{p}$ is asymmetric. 
By Corollaries~\ref{cor_PF-eivenvector-13} and \ref{cor_matrix-computation}, 
we can inductively prove that 
$\bm{x}^{(i)}|_1 \ne \bm{x}^{(i)}|_3$ for all $i \in \{0, \dots, n\}$. 
Proposition~\ref{prop_builidngblock}(2) tells us that 
$$
(\mathfrak{b}, \bm{x}^{(i-1)}) = (\delta_2^{q_i} \delta_1^{-p_i} \delta_3^{-p_i'} \circ \ls^{3q_i} \circ \rs^{p_i+p_i'}) (\mathfrak{b}, \bm{x}^{(i)} ) 
\hspace{5mm}\mbox{for\ } i \in \{1, \dots, n\}. 
$$
By the above equality for $i = 1, 2$, we obtain 
\begin{eqnarray*}
(\mathfrak{b}, \bm{v} ) &=& (\delta_2^{q_1} \delta_1^{-p_1} \delta_3^{-p_1'} \circ \ls^{3q_1} \circ \rs^{p_1+p_1'}) (\mathfrak{b}, \bm{x}^{(1)})
\\
&=& 
 (\delta_2^{q_1} \delta_1^{-p_1} \delta_3^{-p_1'} \circ \ls^{3q_1} \circ \rs^{p_1+p_1'}) \circ 
 (\delta_2^{q_2} \delta_1^{-p_2} \delta_3^{-p_2'} \circ \ls^{3q_2} \circ \rs^{p_2+p_2'}) (\mathfrak{b}, \bm{x}^{(2)})
 \\
 &=& 
 (\delta_2^{q_1} \delta_1^{-p_1} \delta_3^{-p_1'}\delta_2^{q_2} \delta_1^{-p_2} \delta_3^{-p_2'} \circ 
  \ls^{3q_1} \circ \rs^{p_1+p_1'} \circ \ls^{3q_2} \circ \rs^{p_2+p_2'}) (\mathfrak{b}, \bm{x}^{(2)}). 
\end{eqnarray*}
Repeating this argument, we finally obtain 
$$(\mathfrak{b}, \bm{v} ) = 
(\Phi_{\bm{p}}^{-1} \circ   \ls^{3q_1} \circ \rs^{p_1+p_1'} \circ \dots \circ  \ls^{3q_n} \circ \rs^{p_n+p_n'}) (\mathfrak{b},  \lambda_{\bm{p}} \bm{v} = \bm{x}^{(n)} ).$$ 
This means that 
$$(\mathfrak{b}, \lambda_{\bm{p}} \bm{v}) \rs^{p_n+p_n'} \ls^{3q_n}  \cdots \rs^{p_1+p_1'} \ls^{3q_1} \Phi_{\bm{p}}(\mathfrak{b}, \bm{v} ),$$
which is an Agol cycle of $\Phi_{\bm{p}}$ with length 
$ \sum_{i=1}^n (p_i+ p_i' + 3  q_i)$.

Suppose that  $\bm{p}$ is symmetric. 
By Corollary~\ref{cor_PF-eivenvector-13} $\bm{v}|_1 = \bm{v}|_3$ holds. 
A calculation shows that 
$\bm{x}^{(i)}|_1 = \bm{x}^{(i)}|_3$ for all $i \in \{0, \dots, n\}$. 
Applying Proposition~\ref{prop_builidngblock}(1), we have 
\begin{equation}
\label{equation_maximal-splittings}
(\mathfrak{b}, \bm{x}^{(i-1)}) = (\delta_2^{q_i} \delta_1^{-p_i} \delta_3^{-p_i} \circ \ls^{2q_i} \circ \rs^{p_i}) (\mathfrak{b}, \bm{x}^{(i)} ) 
\hspace{5mm}\mbox{for\ } i \in \{1, \dots, n\}. 
\end{equation}
Putting the above equalities (\ref{equation_maximal-splittings}) for each $i \in \{1, \cdots, n\}$ together,
we can obtain 
$$(\mathfrak{b}, \bm{v} ) = 
(\Phi_{\bm{p}}^{-1} \circ   \ls^{2q_1} \circ \rs^{p_1} \circ \dots \circ  \ls^{2q_n} \circ \rs^{p_n}) (\mathfrak{b},  \lambda_{\bm{p}} \bm{v} ).$$ 
This gives an Agol cycle of $\Phi_{\bm{p}}$ with length 
$ \sum_{i=1}^n (p_i + 2  q_i)$. 
We finished the proof.
\end{proof}

\begin{ex}\label{ex_simple_agol_cycle}
We present 2 examples for Agol cycles and their total splitting numbers. 
Recall that  $\bm{v}_{\bm{p}}$ is  the  normalized eigenvector with respect to $\lambda_{\bm{p}}$. 
\begin{enumerate}
\item 
For ${\bm{p}}= (1,1,1) \in \mathcal{I}_1$ symmetric, 
we have $\bm{v}_{\bm{p}} =  \left(\begin{smallmatrix} x\\ y \\ x \end{smallmatrix}\right)$ for some $x,y >0$ and 
$M_{\bm{p}} \bm{v}_{\bm{p}} = M_1 M_3 M_2 \bm{v}_{\bm{p}} = \left(\begin{smallmatrix} 3x+y\\ 2x+y \\ 3x+y \end{smallmatrix}\right)$. 
Figure \ref{fig_ex_symmetry} illustrates an Agol cycle 
    $(\mathfrak{b}, \lambda_{\bm{p}} \bm{v}_{\bm{p}}) \rs\ \ls^2 \Phi_{\bm{p}}(\mathfrak{b}, \bm{v}_{\bm{p}})$ of $\Phi_{\bm{p}}=  \delta_1\delta_3\delta_2^{-1} $  
    with length $3$. 
The splitting number of each maximal splitting in the Agol cycle is exactly $2$. 
Hence, we have $N(\Phi_{\bm{p}})= 2 \cdot 3 = 6$.

\item 
For ${\bm{p}}= (1,2,1) \in \mathcal{I}_1$ asymmetric, 
$(\mathfrak{b}, \lambda_{\bm{p}} \bm{v}_{\bm{p}}) \rs^3\ \ls^3 \Phi_{\bm{p}}(\mathfrak{b}, \bm{v}_{\bm{p}})$ 
is an Agol cycle of $\Phi_{\bm{p}} = \delta_1 \delta_3^2 \delta_2^{-1}$ with length $6$ by Theorem~\ref{thm_2-torus}. 
The splitting number of each maximal splitting in the Agol cycle is $1$, 
except for the last maximal splitting $\ls$ 
with the splitting number $2$. 
(See Figure~\ref{fig_left_split}(3)(4).)
Hence, we have $N(\Phi_{\bm{p}}) =  7$. 
\end{enumerate}
\begin{figure}[ht]
\centering
\includegraphics[height=3.8cm]{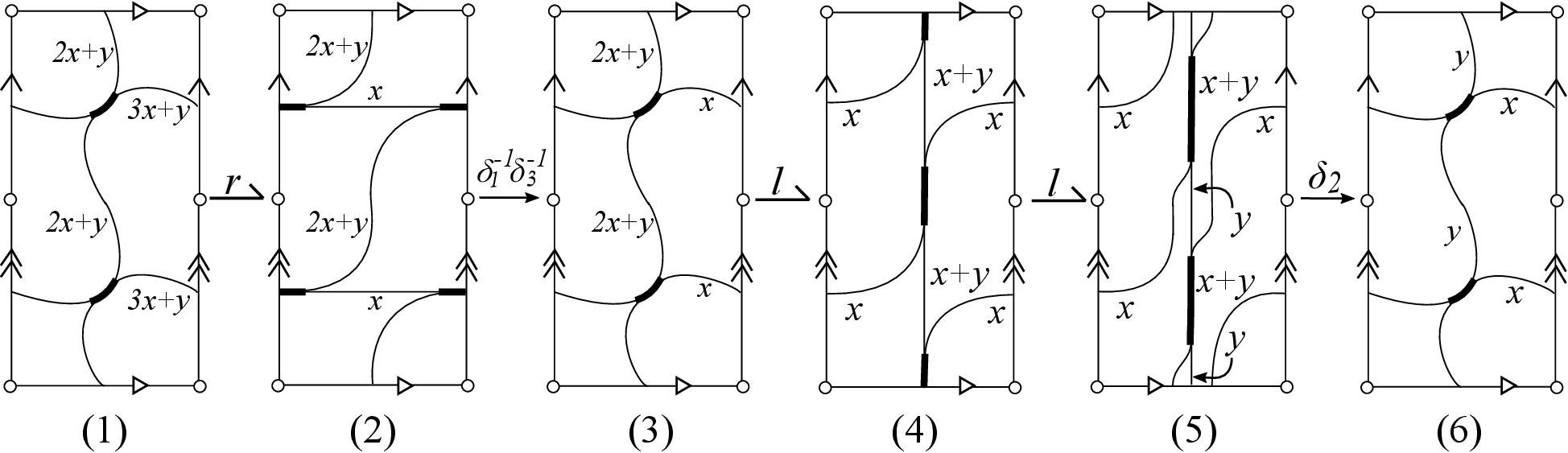}
\caption{An Agol cycle 
of $\Phi_{\bm{p}} $ for $\boldsymbol{p} =(1, 1, 1)$. 
(1) $(\mathfrak{b}, M_1 M_3 M_2\bm{v}_{\bm{p}}) $. 
(3) $(\mathfrak{b}, M_2 \bm{v}_{\bm{p}})$. 
(6) $(\mathfrak{b}, \bm{v}_{\bm{p}})$.} 
\label{fig_ex_symmetry}
\end{figure}
\end{ex}

\begin{thm}
\label{thm_total_FT}
For $ \bm{p}=  (p_n, p_n', q_n, \dots, p_1, p_1', q_1)  \in  \mathcal{I}_n$ 
the total splitting number of an Agol cycle of $\Phi_{\bm{p}}$ is given by 
$N(\Phi_{\bm{p}}) = \sum_{i=1}^n (p_i+ p_i' + 4q_i)$.  
\end{thm}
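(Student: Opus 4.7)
The plan is to tally the splitting number of each maximal splitting in the Agol cycle produced by Theorem~\ref{thm_2-torus} and then sum over the $n$ blocks. For each index $i \in \{1,\dots,n\}$ the Agol cycle contains a right-splitting block ($\rs^{p_i}$ in the symmetric case and $\rs^{p_i+p_i'}$ in the asymmetric case) followed by a left-splitting block ($\ls^{2q_i}$ resp.~$\ls^{3q_i}$). The target claim is that the $i$-th right-splitting block contributes exactly $p_i+p_i'$ to the total splitting number and the $i$-th left-splitting block contributes exactly $4q_i$, regardless of which case we are in; summing then gives the formula.

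First I would set up the intermediate vectors $\bm{x}^{(i)}$ as in the proof of Theorem~\ref{thm_2-torus} and invoke Corollaries~\ref{cor_PF-eivenvector-13} and~\ref{cor_matrix-computation} to separate the symmetric case (in which $\bm{x}^{(i)}|_1 = \bm{x}^{(i)}|_3$ for every $i$) from the asymmetric case (in which $\bm{x}^{(i)}|_1 \ne \bm{x}^{(i)}|_3$ for every $i$). For the right-splitting blocks in the symmetric case, each of the $p_i$ maximal right splittings in $\rs^{p_i}$ arises from Lemma~\ref{lem_step-maximal-splitting}(1) symmetric case applied to a state whose two large branches carry the same maximal weight; each therefore has splitting number $2$, and the block contributes $2p_i = p_i+p_i'$. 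In the asymmetric case, $\rs^{p_i+p_i'}$ decomposes, as in the proof of Proposition~\ref{prop_builidngblock}(2), into $|p_i-p_i'|$ right splittings of type~(2) or~(3) of Lemma~\ref{lem_step-maximal-splitting} followed by $2\min(p_i,p_i')$ right splittings of type~(1) asymmetric case. Each of these acts on a state whose two large branches have distinct weights and so has splitting number $1$, and the block contributes $p_i+p_i'$.

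For the left-splitting blocks I would argue as follows. In the symmetric case, the identity $\bm{x}^{(i)}|_1 = \bm{x}^{(i)}|_3$ is preserved by $M_2$, so every intermediate state $(\mathfrak{b},M_2^{k}\bm{x}^{(i-1)})$ inherits the left-right symmetry of $\mathfrak{b}$; consequently each of the $2q_i$ left maximal splittings in $\ls^{2q_i}$ acts on two symmetric large branches of equal maximal weight, contributing splitting number $2$ each, for a block total of $4q_i$. In the asymmetric case, $\ls^{3q_i}$ is a concatenation of $q_i$ copies of $\ls^3$, one per $\delta_2^{-1}$. Inspecting the intermediate measured train tracks in the proof of Lemma~\ref{lem_step-maximal-splitting}(4) asymmetric case (Figure~\ref{fig_left_split}), one verifies that within each $\ls^3$ the first two maximal splittings each split a unique large branch (splitting number $1$), while in the third the two large branches have come to share the maximal weight and must both be split (splitting number $2$); this matches Example~\ref{ex_simple_agol_cycle}(2). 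Each $\ls^3$ thus contributes $1+1+2 = 4$ and the block contributes $4q_i$.

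Summing the block contributions yields $N(\Phi_{\bm{p}}) = \sum_{i=1}^n(p_i+p_i'+4q_i)$. I expect the main obstacle to be the local analysis in the asymmetric case, namely checking that the third left maximal splitting in each $\ls^3$ indeed has splitting number $2$. This will require writing out the branch weights of the successive measured train tracks in Figure~\ref{fig_left_split} and verifying the relevant weight equality at the penultimate step; once this local calculation is pinned down, the remainder of the argument is routine bookkeeping.
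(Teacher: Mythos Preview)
Your proposal is correct and follows essentially the same approach as the paper: both arguments count the splitting number of each maximal splitting in the Agol cycle of Theorem~\ref{thm_2-torus}, observing that in the symmetric case every maximal splitting has splitting number~$2$ while in the asymmetric case the right splittings each have splitting number~$1$ and each $\ls^3$ block contributes~$4$ (with the last of the three left splittings having splitting number~$2$, as witnessed by Figure~\ref{fig_left_split}). Your write-up is slightly more explicit about the decomposition of $\rs^{p_i+p_i'}$ in the asymmetric case, but the substance is the same.
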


\begin{proof}
By Proposition~\ref{prop_builidngblock}(2) in the case of asymmetric weights, i.e. $x \ne z$, 
we have a finite sequence  
$(\mathfrak{b}, M_1^p M_3^{p'} M_2^q {\bm x}) \rs^{p+p'} \ls^{3q} \delta_1^p \delta_3^{p'} \delta_2^{-q}(\mathfrak{b}, \bm{x})$. 
The total splitting number of the finite sequence (Definition~\ref{definition_splitting-number}(2)) is $p+p'+ 4q$. 
The coefficient $4$ of $4q$ comes from the total splitting number of a finite sequence 
$(\mathfrak{b},  M_2^q \bm{x}) \ls^3 \delta_2^{-1} (\mathfrak{b},  M_2^{q-1} \bm{x})$ 
when $x \ne z$. 
See Figure~\ref{fig_left_split}. 
In the case of symmetric weights, i.e. $x = z$, 
Proposition~\ref{prop_builidngblock}(1) tells us that 
there exists a finite sequence 
$(\mathfrak{b}, M_1^p M_3^{p} M_2^q {\bm x}) \rs^{p} \ls^{2q} \delta_1^p \delta_3^{p} \delta_2^{-q}(\mathfrak{b}, \bm{x})$. 
Its total splitting number is $2(p+ 2q)= p+p + 4q$ 
since the splitting number of a maximal splitting in this finite sequence is exactly $2$. 

The weight  of  $(\mathfrak{b}, M_{\bm{p}} \bm{v}_{\bm{p}})$ 
 is given by $M_{\bm{p}}\bm{v}_{\bm{p}} =  M_1^{p_n} M_3^{p_n'} M_2^{q_n} \cdots M_1^{p_1} M_3^{p_1'} M_2^{q_1}\bm{v}_{\bm{p}} $. 
By the repetition of the above argument, 
we can prove that 
$N(\Phi_{\bm{p}}) = \sum_{i=1}^n (p_i+ p_i' + 4q_i)$. 
\end{proof}

\section{Agol cycles of pseudo-Anosov maps in  \texorpdfstring{$F_D$}{Lg}}
\label{section_FD}

We introduce positive integers $S_i(\bm{p})$ and $ A_i(\bm{p})$ for 
$ \bm{p}  \in  \mathcal{I}_n$ as follows. 
\begin{eqnarray*} 
S_i(\bm{p})=  p_i+2  \hspace{5mm}\mbox{and} \hspace{5mm} 
 A_i (\bm{p})= 
\left\{
\begin{array}{lll}
2p_i & \mbox{if}\ p_i'=0, 
\\
2p_i' & \mbox{if}\ p_i=0, 
\\
p_i+ p_i'+ 2  & \mbox{otherwise}.
\end{array}
\right.
\end{eqnarray*}
In this section, we prove the following result. 

\begin{thm}
\label{thm_5-punctured-sphere-precise}
For $ \bm{p}  \in  \mathcal{I}_n$ 
let  $\phi_{\bm{p}} \in F_D$  be the pseudo-Anosov map  
and  $M_{\bm{p}}$ be the Perron-Frobenius matrix  associated with $ \bm{p}$. 
Let ${\bm v} >\bm{0}$ be an eigenvector with respect to the Perron-Frobenius eigenvalue $\lambda_{\bm{p}}$ of  $M_{\bm{p}}$. 
Then  the Agol cycle length $\ell$ of $\phi_{\bm{p}}$ is 
$$  \ell= 
\left\{
\begin{array}{ll}
\sum_{i=1}^n (S_i (\bm{p})+ 2 q_i) 
& \mbox{if}\ \bm{p}\ \mbox{is symmetric}, 
\\
\sum_{i=1}^n (A_i (\bm{p})+ 3  q_i )
& \mbox{if}\ \bm{p}\ \mbox{is asymmetric}. 
\end{array}
\right.
$$
Moreover, starting with the measured train track $(\mathfrak{b}_0, \mu_0) = ( \mathfrak{b}_L, \lambda_{\bm{p}}\bm{v})$, 
a finite subsequence of  the maximal splitting sequence 
\begin{eqnarray*}
 (\mathfrak{b}_0, \mu_0) \rightharpoonup^{S_n(\bm{p})+ 2q_n}  \cdots \rightharpoonup^{S_1(\bm{p})+ 2q_1} (\mathfrak{b}_{\ell}, \mu_{\ell} ) 
& \mbox{if}& \hspace{-2mm} \bm{p}\ \mbox{is symmetric}, 
\\
(\mathfrak{b}_0, \mu_0)  \rightharpoonup^{A_n(\bm{p})+ 3q_n}  \cdots \rightharpoonup^{A_1(\bm{p})+ 3q_1}  (\mathfrak{b}_{\ell}, \mu_{\ell} ) 
& \mbox{if}& \hspace{-2mm} \bm{p}\ \mbox{is asymmetric}
\end{eqnarray*}
forms an Agol cycle of $\phi_{\bm{p}}$. 
The consecutive maximal splittings consist of the following left, right and mixed maximal splittings 
\begin{eqnarray*}
\rightharpoonup^{S_i(\bm{p})+ 2q_i} &=&  \rs\  \ls \  \rs^{p_i-1} \ \ls\  \rs \  \ls^{2q_i-1}, 
\\
\rightharpoonup^{A_i(\bm{p})+ 3q_i} &=& 
\left\{
\begin{array}{lll}
 \lrs\  \rs^{2p_i-1} \ \ls^{3q_i} & \mbox{if}\ p_i'=0, 
\\
\lrs\  \rs^{2p'_i-1} \  \ls^{3q_i} & \mbox{if}\ p_i=0, 
\\
\rs \  \ls\  \rs^{p_i+p_i'-2}\  \ls^2\  \rs\   \ls^{3q_i-1}& \mbox{otherwise}.
\end{array}
\right.
\end{eqnarray*}
\end{thm}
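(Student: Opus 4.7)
The plan is to mirror the proof of Theorem~\ref{thm_2-torus}, working on the measured train track $(\mathfrak{b}_L, \bm{x})$ in $\Sigma_{0,5}$ in place of $(\mathfrak{b}, \bm{x})$ in $\Sigma_{1,2}$. First I would establish analogues of Lemma~\ref{lem_step-maximal-splitting} describing what happens when a single maximal splitting is applied to a measured train track of the form $(\mathfrak{b}_L, M_{1}^{p} M_{3}^{p'} M_{2}^{q} \bm{x})$, distinguishing the cases $x = z$ and $x \ne z$. These step lemmas will then be assembled into a building-block proposition, the analogue of Proposition~\ref{prop_builidngblock}, realising the action of $\sigma_{1}^{p} \sigma_{3}^{p'} \sigma_{2}^{-q}$ via the exact maximal splitting patterns prescribed in the theorem. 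Finally I would concatenate the building blocks using Lemma~\ref{lem_commute} to obtain the full Agol cycle.

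For the step lemmas, as in Lemma~\ref{lem_step-maximal-splitting} I would reduce to the case $q = 1$ by observing that $M_{2}^{q}\bm{x}$ has the same first and third coordinates as $\bm{x}$, so the symmetry $\bm{v}_{\bm{p}}|_{1} = \bm{v}_{\bm{p}}|_{3}$ supplied by Corollary~\ref{cor_PF-eivenvector-13} propagates through each block. Computing $M_{1}^{p} M_{3}^{p'} M_{2} \bm{x}$ explicitly and placing these weights on $\mathfrak{b}_L$ via the switch condition determines which branches are large and carry the maximal weight at each stage, and hence both the type (left, right, or mixed) and the splitting number of every successive maximal splitting.

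For the building blocks in the symmetric case $p = p'$, I expect the pattern $\rs\, \ls\, \rs^{p-1}\, \ls\, \rs\, \ls^{2q-1}$ to emerge from tracking the weights through the block; in the asymmetric case the analysis splits into three sub-cases according to whether $p' = 0$, $p = 0$, or $p, p' > 0$, each producing the corresponding sequence stated in the theorem. Concatenation is then routine: setting $\bm{x}^{(0)} \colonequals \bm{v}$ and $\bm{x}^{(i)} \colonequals M_{1}^{p_i} M_{3}^{p_i'} M_{2}^{q_i} \bm{x}^{(i-1)}$, Corollaries~\ref{cor_PF-eivenvector-13} and \ref{cor_matrix-computation} guarantee that $\bm{x}^{(i)}|_{1}$ and $\bm{x}^{(i)}|_{3}$ agree for all $i$ in the symmetric case and differ for all $i$ in the asymmetric case, so the appropriate building block applies at each step. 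Combining the prefix diffeomorphisms into $\phi_{\bm{p}}^{-1}$ via Lemma~\ref{lem_commute} and summing the splitting counts $S_i(\bm{p}) + 2q_i$ or $A_i(\bm{p}) + 3q_i$ yields the claimed Agol cycle.

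The main obstacle will be verifying the detailed splitting patterns for the asymmetric building block. The train track $\mathfrak{b}_L$ is combinatorially richer than $\mathfrak{b}$ because each puncture carries additional small branches, and the right splittings associated with $M_{1}^{p}$ and $M_{3}^{p'}$ interact in a non-trivial way whenever both $p$ and $p'$ are positive, producing the mixed splittings $\lrs$ that did not appear in Theorem~\ref{thm_2-torus}. Tracking every branch weight of $\mathfrak{b}_L$ through each successive maximal splitting -- conceptually analogous to Figures~\ref{fig_right_split}--\ref{fig_left_split} but combinatorially more involved -- will be the delicate step, in particular identifying the precise location of the mixed splitting and confirming that the middle $\rs\, \ls\, \rs^{p+p'-2}\, \ls^{2}\, \rs$ block in the third sub-case is correct.
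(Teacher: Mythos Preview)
Your overall strategy is right and matches the paper's, but there are two concrete gaps that will block you if you try to carry it out as described.

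First, unlike on $\Sigma_{1,2}$, a maximal splitting applied to $(\mathfrak{b}_L, M_1^{p}M_3^{p'}M_2^{q}\bm{x})$ does \emph{not} return you to the same train track $\mathfrak{b}_L$ (up to a half-twist) with a smaller measure. The paper introduces several auxiliary train tracks $\mathfrak{b}_R$, $\mathfrak{a}'_R$, $\mathfrak{s}$, and $\Delta(\mathfrak{a}'_R)$, and shows that the maximal splitting sequence cycles through these different topological types before returning to $\mathfrak{b}_L$ at the end of each block (this is the ``automaton'' of Figure~\ref{fig_braid_automaton}). For example, in a type~B block the first two splittings $\rs\,\ls$ take $\mathfrak{b}_L$ to $\mathfrak{b}_R$ (Lemma~\ref{lem_step-braid}(b1)), the next $\rs^{p+p'-2}$ happen on $\mathfrak{b}_R$, and then $\ls^2\,\rs\,\ls^2$ bring you back to $\mathfrak{b}_L$ via $\mathfrak{s}$. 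Your plan to ``place these weights on $\mathfrak{b}_L$'' at every stage will not work; you need to identify these intermediate types and prove the corresponding step lemmas for each of them.

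Second, you have the location of the mixed splittings $\lrs$ backwards. They do \emph{not} arise when both $p_i$ and $p_i'$ are positive; in that case (the ``otherwise'' sub-case) the pattern $\rs\,\ls\,\rs^{p_i+p_i'-2}\,\ls^2\,\rs\,\ls^{3q_i-1}$ contains only pure left and right maximal splittings. The mixed splitting $\lrs$ occurs precisely in the degenerate blocks where one of $p_i, p_i'$ is zero (types A and A$'$), because then the two large branches of $\mathfrak{b}_L$ simultaneously carry the maximal weight but split in opposite directions (Lemma~\ref{lem_degenerate-3}(a$'$1) and Lemma~\ref{lem_degenerate-1}(a1)). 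So the three asymmetric sub-cases require genuinely different step lemmas, not a single unified one.
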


Figure~\ref{fig_braid_symmetry}(1) shows the measured train track $(\mathfrak{b}_L, \bm{x})$ that was defined in Section~\ref{section_Introduction}. 
Recall that the vector $\bm{x}$ reflects the weights of specific branches. 
Due to the switch condition, the weights on all remaining branches are determined. 
We introduce the measured train tracks 
$(\mathfrak{b}_R, \bm{x})$, 
$(\mathfrak{a}'_R, \bm{x})$ and 
$(\mathfrak{s}, \bm{x})$ in $\Sigma_ {0,5}$
as in Figure~\ref{fig_braid_symmetry}(2), (4) and (5) respectively. 
 Figure~\ref{fig_braid_symmetry}(3) gives the measured train track $\Delta(\mathfrak{a}'_R, \bm{x})$, 
 where $\Delta= \sigma_1 \sigma_2 \sigma_3 \sigma_1 \sigma_2 \sigma_1 \in \mathrm{MCG}(\Sigma_{0,5})$ is the $\pi$-rotation 
 (Figure~\ref{fig_braid_symmetry}(6)).

For $\phi_{\bm{p}}= \sigma_1^{p_n} \sigma_3^{p_n'} \sigma_2^{- q_n} \dots \sigma_1^{p_1} \sigma_3^{p_1'} \sigma_2^{- q_1} \in F_D$  
we call the product $ \sigma_1^{p_j} \sigma_3^{p_j'} \sigma_2^{- q_j}$ the ($j$-th) block of $\phi_{\bm{p}}$ and 
 say that the block is of type A (resp. A') if $p_j'= 0$ (resp. $p_j= 0$). 
Otherwise, we call it a type B block.

For the proof of Theorem~\ref{thm_5-punctured-sphere-precise} 
we consider each  block $\sigma_1^{p_j} \sigma_3^{p_j'} \sigma_2^{-q_j}$ of $\phi_{\bm{p}}$. 
The transition matrix induced by $\sigma_1^{p_j} \sigma_3^{p_j'} \sigma_2^{-q_j}$ is $M_1^{p_j} M_3^{p_j'} M_2^{q_j}$. 
Depending on the type of the block, consecutive maximal splittings of $(\mathfrak{b}_L, M_1^{p_j} M_3^{p_j'} M_2^{q_j} \bm{x})$ will result in different finite sequences. Figure~\ref{fig_braid_automaton} is the central tool in this paper. It illustrates how finite sequences of maximal splittings transition one measured train track into another. The details are given in Lemmas~\ref{lem_step-braid}, \ref{lem_degenerate-3} and \ref{lem_degenerate-1}. 
We will see that the concatenation of suitable finite sequences gives an Agol cycle of the pseudo-Anosov map $\phi_{\bm{p}}$.

\begin{figure}[ht]
\centering
\includegraphics[height=3.5cm]{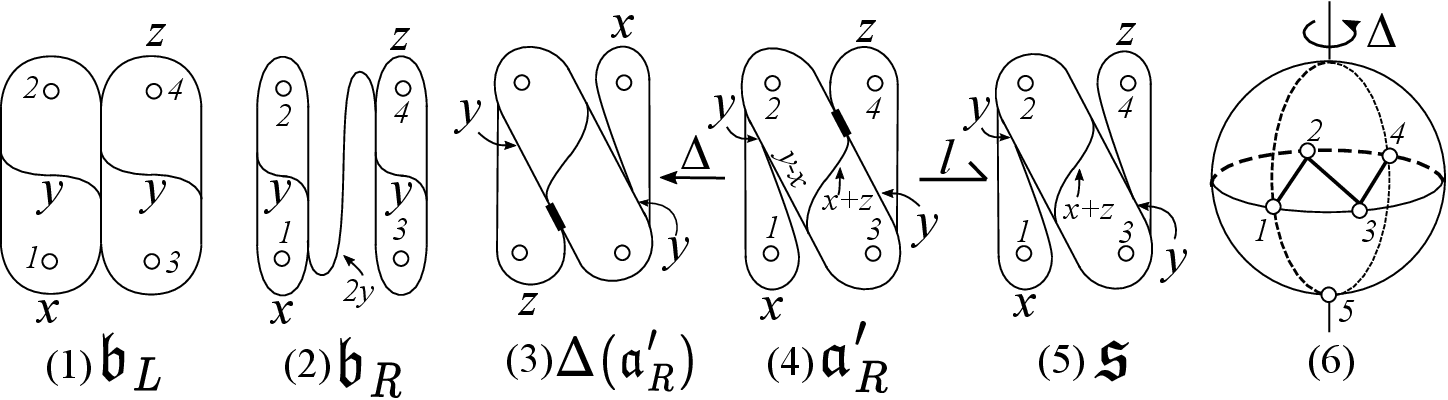}
\caption{
(1) $(\mathfrak{b}_L, \bm{x} )$, 
(2) $(\mathfrak{b}_R,  \bm{x})$, 
(3) $\Delta(\mathfrak{a}'_R,  \bm{x})$, 
(4) $ (\mathfrak{a}'_R,  \bm{x} )$, 
(5) $ (\mathfrak{s},  \bm{x})$ 
for $\bm{x}=  \left(\begin{smallmatrix} x \\ y \\ z \end{smallmatrix}\right)$. 
(6) $\Delta= \sigma_1 \sigma_2 \sigma_3 \sigma_1 \sigma_2 \sigma_1 \in \mathrm{MCG}(\Sigma_ {0,5})$. 
Figures (4)(5) illustrate a left maximal splitting 
$ (\mathfrak{a}'_R,  \bm{x} ) \ls  (\mathfrak{s},  \bm{x} )$ 
for $z < y$.}
\label{fig_braid_symmetry}
\end{figure}

\begin{figure}[ht]
\begin{center}
\includegraphics[height=6.5cm]{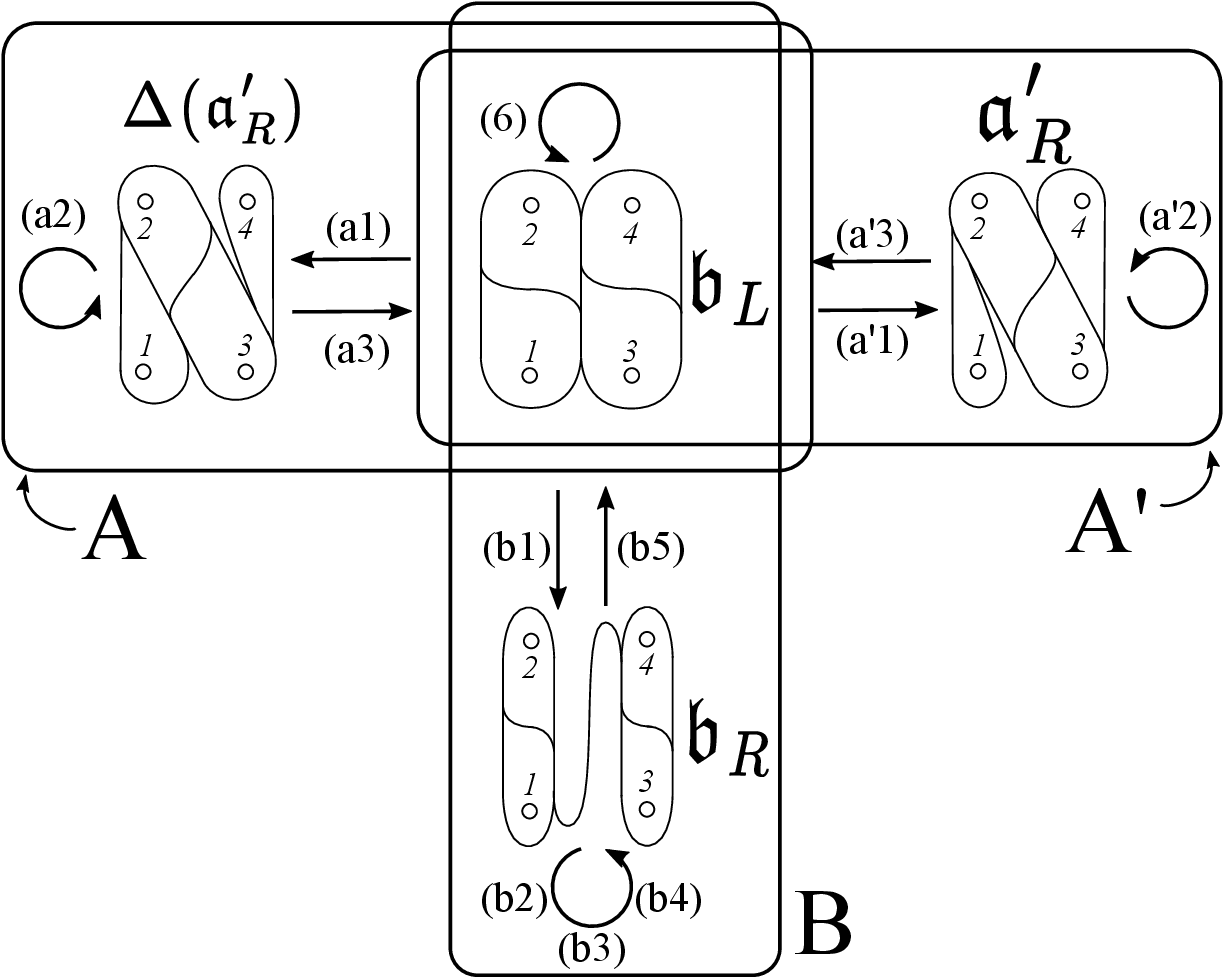}
\caption{``Automaton" illustrating how the train tracks move between topological types under the operations in Lemmas~\ref{lem_step-braid}, 
\ref{lem_degenerate-3} and \ref{lem_degenerate-1}. 
Box B displays Lemma~\ref{lem_step-braid}. 
Box A and A' display  Lemmas~\ref{lem_degenerate-1} and \ref{lem_degenerate-3} respectively. 
}
\label{fig_braid_automaton}
\end{center}
\end{figure}

\begin{lem}
\label{lem_step-braid}
Let $q \in {\mathbb N}$ and $p, p' \in {\mathbb N}_0$. 
Let $\bm{x}= \left(\begin{smallmatrix} x \\ y \\ z \end{smallmatrix}\right) >\bm{0}$.  
\begin{enumerate}
\item[(b1)] 
Suppose that $p, p' >0$. Then  
$$(\mathfrak{b}_R, M_1^{p-1} M_3^{p'-1} M_2^q \bm{x}) = (\sigma_1^{-1} \sigma_3^{-1} \circ \ls \circ \rs) (\mathfrak{b}_L, M_1^p M_3^{p'} M_2^q \bm{x}).$$

\item[(b2)] 
Suppose that $p>0$.  Then  
$$(\mathfrak{b}_R, M_1^{p-1} M_3^{p-1} M_2^q \bm{x}) = 
\left\{
\begin{array}{ll}
(\sigma_1^{-1} \sigma_3^{-1} \circ \rs) (\mathfrak{b}_R, M_1^{p} M_3^{p} M_2^q \bm{x}) & \mbox{if}\ x = z,\\
(\sigma_1^{-1} \sigma_3^{-1} \circ \rs^2) (\mathfrak{b}_R, M_1^{p} M_3^{p} M_2^q \bm{x}) & \mbox{if}\ x \ne z.
\end{array}
\right.$$

\item[(b3)] 
Suppose that $p > p' \ge 0$. 
Then  
$$(\mathfrak{b}_R, M_1^{p-1} M_3^{p'} M_2^q \bm{x}) = 
(\sigma_1^{-1}  \circ \rs) (\mathfrak{b}_R, M_1^p M_3^{p'} M_2^q \bm{x}).$$

\item[(b4)] 
Suppose that $0 \le p < p'$. 
Then  
$$(\mathfrak{b}_R, M_1^{p} M_3^{p'-1} M_2^q \bm{x}) = 
(\sigma_3^{-1}  \circ \rs) (\mathfrak{b}_R, M_1^p M_3^{p'} M_2^q \bm{x}) .$$

\item[(b5)]
$(\mathfrak{b}_L,  M_2^{q-1} \bm{x}) = 
\left\{
\begin{array}{ll}
(\sigma_2 \circ \ls \circ \rs \circ \ls) (\mathfrak{b}_R,  M_2^q \bm{x}) & \mbox{if}\ x = z,\\
(\sigma_2 \circ \ls^2 \circ \rs \circ \ls^2) (\mathfrak{b}_R,  M_2^q \bm{x}) & \mbox{if}\ x \ne z.
\end{array}
\right.
$

\item[(6)] 
$(\mathfrak{b}_L,  M_2^{q-1} \bm{x}) = 
\left\{
\begin{array}{ll}
(\sigma_2 \circ \ls^2) (\mathfrak{b}_L,  M_2^{q} \bm{x}) & \mbox{if}\ x = z,\\
(\sigma_2 \circ \ls^3) (\mathfrak{b}_L,  M_2^{q} \bm{x}) & \mbox{if}\ x \ne z.
\end{array}
\right.
$
\end{enumerate}
\end{lem}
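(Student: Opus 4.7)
The plan is to mimic the strategy of Lemma~\ref{lem_step-maximal-splitting} while accounting for the extra bookkeeping created by having two train track topologies $\mathfrak{b}_L$ and $\mathfrak{b}_R$ in $\Sigma_{0,5}$ (plus the auxiliary tracks $\mathfrak{a}'_R$ and $\mathfrak{s}$). First I would reduce every statement to the case $q=1$. A direct computation shows $M_2^q\bm{x} = \left(\begin{smallmatrix} x \\ qx+y+qz \\ z \end{smallmatrix}\right)$, so the first and third coordinates are preserved; consequently the hypothesis ``$x=z$'' versus ``$x\neq z$'' survives passage from $\bm{x}$ to $M_2^{q-1}\bm{x}$, and applying any of (b1)--(b4) to $M_2^{q-1}\bm{x}$ in place of $\bm{x}$ yields the general statement.

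For parts (b1)--(b4) and (6) I would then perform the splittings pictorially. Given the explicit weights of $(\mathfrak{b}_L, M_1^pM_3^{p'}M_2\bm{x})$ and $(\mathfrak{b}_R, M_1^pM_3^{p'}M_2\bm{x})$ obtained from Figure~\ref{fig_braid_symmetry}(1)(2), I would identify the unique large branch(es) of maximal weight, carry out the corresponding left/right splitting, and verify that the result is the asserted train track, up to applying $\sigma_1^{\pm 1}, \sigma_3^{\pm 1}$, or $\sigma_2^{\pm 1}$. The case split $x=z$ versus $x\neq z$ appears in (b2) and (6) because, when the two ``symmetric'' large branches carry equal maximal weight, a single maximal splitting resolves both simultaneously (one splitting instead of two, resp.\ two instead of three). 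These are direct analogues of the computations done in the proof of Lemma~\ref{lem_step-maximal-splitting}(1),(2),(3),(4) for $\Sigma_{1,2}$, only now carried out on $\mathfrak{b}_L, \mathfrak{b}_R \subset \Sigma_{0,5}$ whose figures already appear in the paper.

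The interesting part is (b5). Here the two sides of the equation involve \emph{different} topological types ($\mathfrak{b}_R$ on the right, $\mathfrak{b}_L$ on the left), and the prescribed sequence $\ls\circ\rs\circ\ls$ (or $\ls^2\circ\rs\circ\ls^2$) is mixed rather than purely of one chirality. The natural route is to use the automaton of Figure~\ref{fig_braid_automaton}: start with $(\mathfrak{b}_R, M_2^q\bm{x})$, perform the first batch of left maximal splittings to transition to $(\mathfrak{a}'_R, \cdot)$ (the train track in Figure~\ref{fig_braid_symmetry}(4)), then a right maximal splitting to reach $(\mathfrak{s}, \cdot)$ (Figure~\ref{fig_braid_symmetry}(5)), and finally another batch of left maximal splittings to land on a translate of $(\mathfrak{b}_L, M_2^{q-1}\bm{x})$. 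The effect is unraveled by applying $\sigma_2^{-1}$. Throughout I would keep the weights $x$, $y$, $z$ symbolic and check that the switch condition is preserved at every step. I would handle $x=z$ and $x\neq z$ by the same case split as in (b2) and (6): equality of the two large-branch weights collapses one of the left-splitting pairs.

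The main obstacle I anticipate is combinatorial rather than conceptual: keeping track, across four distinct train-track topologies ($\mathfrak{b}_L, \mathfrak{b}_R, \mathfrak{a}'_R, \mathfrak{s}$), of which large branch is of maximal weight at each stage, so that one can certify that the recorded operations $\ls$ and $\rs$ really are maximal splittings and not mixed ones. I would organize the verification as a small table of intermediate measured train tracks — one row per step — following the automaton in Figure~\ref{fig_braid_automaton}. Once this table is in place for $q=1$, the case $q\geq 2$ is automatic by the initial reduction, and all six statements fall out.
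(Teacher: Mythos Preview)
Your plan is essentially the paper's own: reduce to $q=1$ via the observation that $M_2^q\bm{x}$ preserves the first and third coordinates, then verify each of (b1)--(b4), (6), (b5) by direct pictorial splitting on $\mathfrak{b}_L,\mathfrak{b}_R\subset\Sigma_{0,5}$, mirroring Lemma~\ref{lem_step-maximal-splitting}. The case splits $x=z$ versus $x\neq z$ are handled exactly as you describe.

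Two small corrections to your route for (b5). First, the paper does not pass through $\mathfrak{a}'_R$ here; $\mathfrak{a}'_R$ is reserved for the degenerate type~A$'$ blocks (Lemma~\ref{lem_degenerate-3}). The intermediate tracks after the first $\ls^2$ on $(\mathfrak{b}_R,M_2\bm{x})$ are unnamed auxiliary tracks in Figure~\ref{fig_llls2}(2$'$)(3$'$); the single $\rs$ then lands on $(\mathfrak{s},M_2\bm{x})$. Note in particular that, by the caption of Figure~\ref{fig_braid_symmetry}, one has $(\mathfrak{a}'_R,\bm{x})\ls(\mathfrak{s},\bm{x})$, so your claim ``a right maximal splitting takes $\mathfrak{a}'_R$ to $\mathfrak{s}$'' is inconsistent with the prescribed $\rs$ in (b5). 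Second, the paper gains a small efficiency you miss: it proves (6) \emph{before} (b5), records that the sequence for (6) already passes through $(\mathfrak{s},M_2\bm{x})$ after its first $\ls$, and then reuses the tail $(\mathfrak{s},M_2\bm{x})\ls^2\sigma_2^{-1}(\mathfrak{b}_L,\bm{x})$ verbatim to finish (b5). So the only new work in (b5) is the three-step passage $(\mathfrak{b}_R,M_2\bm{x})\ls^2\rs(\mathfrak{s},M_2\bm{x})$. If you reorganize your proposal to prove (6) first and recycle its latter half, your argument becomes identical to the paper's.
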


\begin{proof}
 Figure~\ref{fig_rl13} shows that 
$ (\mathfrak{b}_L, M_1 M_3\bm{a}= \left(\begin{smallmatrix} a+b \\ b \\ b+c \end{smallmatrix}\right) ) \rs\  \ls 
\sigma_1 \sigma_3 (\mathfrak{b}_R, \bm{a})$ 
for $\bm{a}= \left(\begin{smallmatrix} a \\ b \\ c \end{smallmatrix}\right) >\bm{0}$. 
In other words, 
$(\mathfrak{b}_R, \bm{a}) = (\sigma_1^{-1} \sigma_3^{-1} \circ \ls \circ \rs) (\mathfrak{b}_L, M_1 M_3\bm{a})$. 
Choosing $\bm{a}= M_1^{p-1} M_3^{p'-1} M_2^q \bm{x}$ as a positive vector, 
we obtain claim (b1).

\begin{figure}[ht]
\begin{center}
\includegraphics[height=3.5cm]{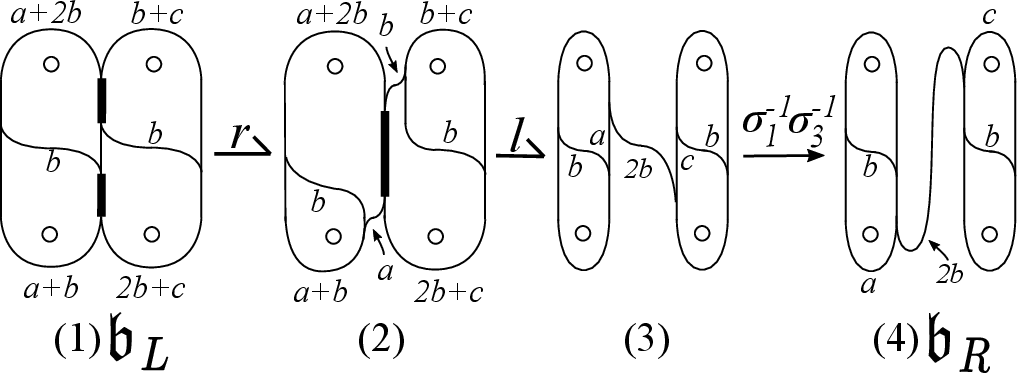}
\caption{
Proof of Lemma~\ref{lem_step-braid}(b1). 
(1) $(\mathfrak{b}_L, M_1 M_3 \bm{a})$. 
(4) $(\mathfrak{b}_R,  \bm{a})$.}
\label{fig_rl13}
\end{center}
\end{figure}

It is enough to prove the remaining claims when $q=1$. 
For  claim (b2), we set 
$(\mathfrak{b}_0, \mu_0) = (\mathfrak{b}_R, M_1^p M_3^p M_2 \bm{x})$. 
The proof is similar to that of Lemma~\ref{lem_step-maximal-splitting}(1). 
Figure~\ref{fig_rr13_splitting} illustrates the proof of (b2) when $x < z$. 
In the case $x = z$, 
the two large branches of  $(\mathfrak{b}_R, M_1^p M_3^p M_2 \bm{x})$  
have the same weight. 
(c.f. Figure~\ref{fig_rr13_splitting}(1).) 
Applying a maximal splitting, we obtain the right maximal splitting 
$ (\mathfrak{b}_R, M_1^p M_3^p M_2 \bm{x}) \rs   \sigma_1 \sigma_3 (\mathfrak{b}_R, M_1^{p-1} M_3^{p-1} M_2 \bm{x})$. 
This completes the proof of claim (b2).

\begin{figure}[htbp]
\begin{center}
\includegraphics[height=3.5cm]{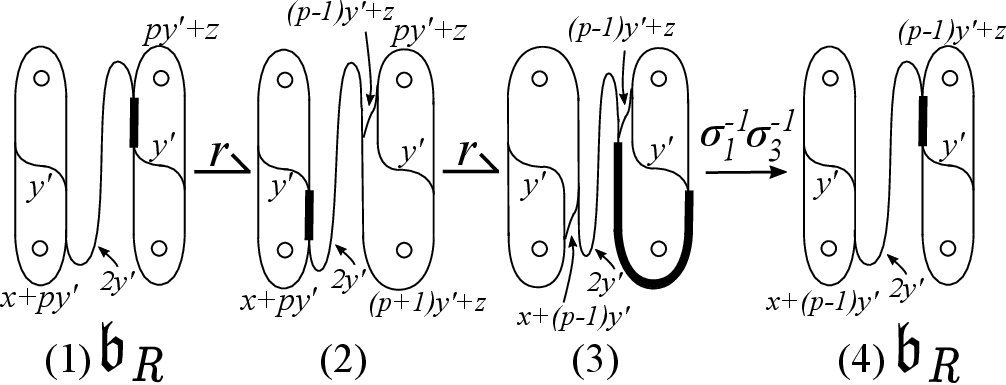}
\caption{
Proof of Lemma~\ref{lem_step-braid}(b2) when $x < z$. 
(1) $(\mathfrak{b}_R, M_1^{p} M_3^{p} M_2 \bm{x})$. 
(4) $(\mathfrak{b}_R, M_1^{p-1} M_3^{p-1} M_2 \bm{x})$.}
\label{fig_rr13_splitting}
\end{center}
\end{figure}

The proof of claim (b3) (resp. (b4)) is similar to that of Lemma~\ref{lem_step-maximal-splitting}(2) (resp. Lemma~\ref{lem_step-maximal-splitting}(3)) 
and we omit the proof.

Before proving claim (b5), we first prove claim (6). 
We consider the measured train track 
 $(\mathfrak{b}_0, \mu_0)= (\mathfrak{b}_L, M_2 \bm{x}=  \left(\begin{smallmatrix} x \\ x+y+z \\ z \end{smallmatrix}\right) )$ 
 when $x \ne z$. 
We may suppose that  $x < z$. 
Applying  $3$ maximal splittings (see Figure~\ref{fig_llls2}(1)--(4)), 
we have $3$ left maximal splittings 
\begin{equation}
\label{equation_3left-splittings}
(\mathfrak{b}_0, \mu_0) = (\mathfrak{b}_L, M_2 \bm{x}) \ls (\mathfrak{b}_1, \mu_1) = (\mathfrak{s}, M_2 \bm{x}) \ls 
(\mathfrak{b}_2, \mu_2) \ls  (\mathfrak{b}_3, \mu_3) = \sigma_2^{-1} (\mathfrak{b}_L,  \bm{x}).
\end{equation}
This gives claim (6) when $x  < z$.

We turn to  the case $x = z$. 
Applying $2$ maximal splittings, 
we obtain $2$ left maximal splittings 
$$
(\mathfrak{b}_0, \mu_0) = (\mathfrak{b}_L, M_2 \bm{x})   \ls (\mathfrak{b}_1, \mu_1) = (\mathfrak{s}, M_2 \bm{x})
  \ls (\mathfrak{b}_2, \mu_2)=  \sigma_2^{-1} (\mathfrak{b}_L, \bm{x}).
$$
This gives the proof of claim (6) when $x=z$.

We finally prove claim (b5). 
Consider the measured train track 
$(\mathfrak{b}_0, \mu_0) = (\mathfrak{b}_R, M_2 \bm{x})$ when $x \ne z$. 
We may suppose that $x < z$. 
Figures~\ref{fig_llls2}(1')--(3') and (2) show that 
$(\mathfrak{b}_0, \mu_0) =(\mathfrak{b}_R, M_2 \bm{x}) \ls^2 \  \rs (\mathfrak{s}, M_2 \bm{x})$. 
Taking the last two  maximal splittings from the finite sequence $(\ref{equation_3left-splittings})$, 
we have 
$ (\mathfrak{s}, M_2 \bm{x}) \ls^2  \sigma_2^{-1} (\mathfrak{b}_L,  \bm{x})$. 
Putting them together, we have 
$$(\mathfrak{b}_0, \mu_0) =(\mathfrak{b}_R, M_2 \bm{x}) \ls^2 \  \rs\ (\mathfrak{s}, M_2 \bm{x})\  \ls^2  \sigma_2^{-1} (\mathfrak{b}_L, \bm{x}).$$
This gives claim (b5) when $x < z$. 

\begin{figure}[ht]
\centering
\includegraphics[height=7cm]{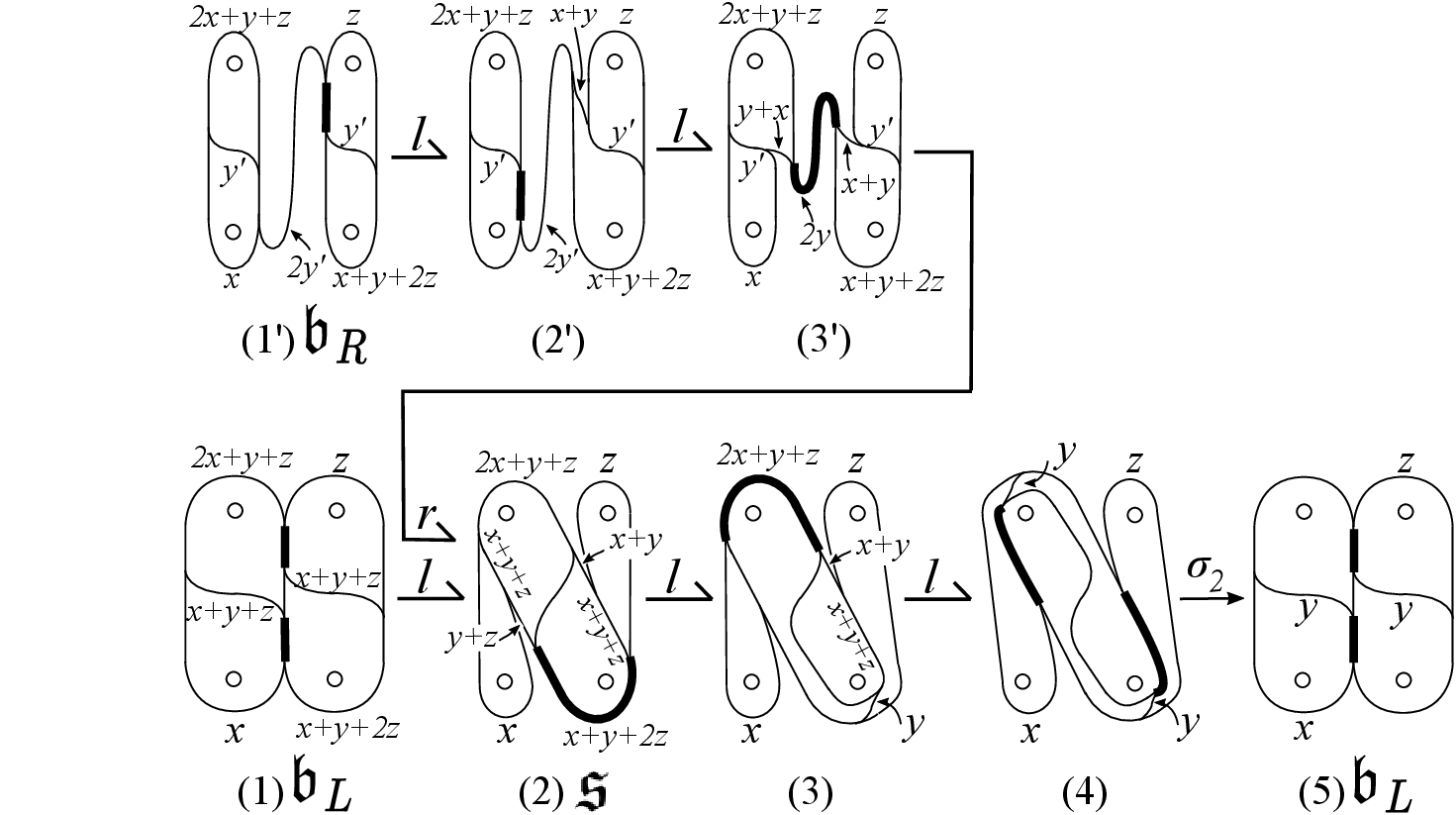}
\caption{(1)--(5) Proof of Lemma~\ref{lem_step-braid}(6) when $x < z$. 
(1')--(3')(2)--(5) Proof of Lemma~\ref{lem_step-braid}(b5) when $x < z$. 
}
\label{fig_llls2}
\end{figure}

In the case $x=z$, 
the measured train track 
$(\mathfrak{b}_R, M_2 \bm{x})$ has two large branches with maximal weight. This gives the finite sequence 
$(\mathfrak{b}_L, M_2 \bm{x}) \ls \  \rs\ (\mathfrak{s}, M_2 \bm{x})\  \ls  \sigma_2^{-1} (\mathfrak{b}_L, \bm{x})$. 
This completes the proof. 
\end{proof}

Let $(\mathfrak{b}_L, M_1^p M_3^p M_2^q \bm{x})$ be a measured train track, 
where  the measure $M_1^p M_3^p M_2^q \bm{x}$ is preceded by a type B block. By repeatedly applying the last lemma, we now compute the maximal splittings of $(\mathfrak{b}_L, M_1^p M_3^p M_2^q \bm{x})$.

\begin{prop}[Type B block]
\label{prop_typeB}
Let $p, p', q \in {\mathbb N}$. 
Let $\bm{x}= \left(\begin{smallmatrix} x \\ y \\ z \end{smallmatrix}\right) >\bm{0}$. 
\begin{enumerate}
\item[(1)] 
(Symmetric case.) 
$(\mathfrak{b}_L, \bm{x}) = (\sigma_2^q \sigma_1^{-p} \sigma_3^{-p} \circ  \rightharpoonup^{p+2+2q}) (\mathfrak{b}_L, M_1^p M_3^p M_2^q \bm{x})$ 
if $x = z$. 
The consecutive maximal splittings consist of the following left and  right maximal splittings 
$$\rightharpoonup^{p+2+2q}\ = \  \ls^{2q-1} \hspace{-2mm} \circ \rs \circ \ls \circ \rs^{p-1} \circ \ls \circ \rs.$$

\item[(2)] 
(Asymmetric case.) 
$(\mathfrak{b}_L, \bm{x}) = (\sigma_2^q \sigma_1^{-p} \sigma_3^{-p'} \circ  \rightharpoonup^{p+ p'+2+3q}) (\mathfrak{b}_L, M_1^p M_3^{p'} M_2^q \bm{x})$ 
if $x \ne z$, possibly $p= p'$. 
The consecutive maximal splittings consist of the following left and  right maximal splittings 
$$ \rightharpoonup^{p+ p'+2+3q} \ =\ \ls^{3q-1} \hspace{-2mm} \circ \rs \circ \ls^2 \circ \rs^{p+p'-2} \circ \ls \circ \rs.$$
\end{enumerate}
\end{prop}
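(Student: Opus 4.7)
The plan is to decompose the claimed sequence of $p+2+2q$ (symmetric) or $p+p'+2+3q$ (asymmetric) maximal splittings into three phases, each of which peels off a block of diffeomorphism factors from the front and is controlled by one item of Lemma~\ref{lem_step-braid}. Throughout, Lemma~\ref{lem_commute} will let me commute every factored diffeomorphism past the subsequent splittings, so that I can work locally on one topological type of train track at a time and assemble the full identity at the end.

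First, I would apply (b1) once to $(\mathfrak{b}_L, M_1^p M_3^{p'} M_2^q \bm{x})$: this consumes the leading $\rs \circ \ls$ of the target sequence, switches the topological type to $\mathfrak{b}_R$, drops one from each of the $M_1$ and $M_3$ exponents, and peels off $\sigma_1 \sigma_3$. Second, working on $(\mathfrak{b}_R, M_1^{p-1} M_3^{p'-1} M_2^q \bm{x})$, I would reduce the $M_1$ and $M_3$ exponents all the way to zero using only right maximal splittings. In the symmetric case this is just $p-1$ iterations of (b2) with $x = z$. In the asymmetric case one first applies (b3) or (b4) to equalize the exponents (costing $|p-p'|$ right splittings) and then iterates (b2) with $x \ne z$ (costing $2(\min(p,p')-1)$ right splittings), for a total of $p+p'-2$ right splittings. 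This produces the middle block $\rs^{p-1}$ or $\rs^{p+p'-2}$ of the sequence and accumulates $\sigma_1^p \sigma_3^{p'}$ in the peeled factor.

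Third, once reduced to $(\mathfrak{b}_R, M_2^q \bm{x})$, I would apply (b5) once to return to type $\mathfrak{b}_L$: this consumes $\ls \rs \ls$ or $\ls^2 \rs \ls^2$ and peels off $\sigma_2^{-1}$ while dropping the $M_2$-exponent to $q-1$. Then I iterate (6) exactly $q - 1$ times, each giving $\ls^2$ or $\ls^3$ and peeling off another $\sigma_2^{-1}$; the concatenation assembles into the suffix $\ls^{2q-1} \circ \rs \circ \ls$ or $\ls^{3q-1} \circ \rs \circ \ls^2$ and produces $\sigma_2^{-q}$ in total. Splicing the three phases together and applying Lemma~\ref{lem_commute} to move every factored $\sigma_i^{\pm 1}$ to the outside then yields precisely the identity in the statement.

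The main obstacle I expect is Phase~2 in the asymmetric case: I must check that the flexible interleaving of (b3), (b4), and (b2) is always legal (exponents non-negative, strict inequality when invoking (b3) or (b4)), and that the hypothesis $x \ne z$ continues to apply at every invocation of the $x \ne z$ branch of (b2). The latter is exactly the content of Corollary~\ref{cor_matrix-computation}, which guarantees that the fixed vector $\bm{x}$ does not acquire $x = z$ during the process. Boundary cases such as $p = 1$, $p' = 1$, or $p = p'$ in the asymmetric case deserve an explicit check (some phases shorten or become vacuous). The rest is routine bookkeeping of splitting counts and their left/right order.
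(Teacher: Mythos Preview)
Your three-phase decomposition is exactly the paper's argument: apply (b1) once, reduce the $M_1, M_3$ exponents on $\mathfrak{b}_R$ via (b3)/(b4) then (b2), and finish with (b5) followed by $q-1$ iterations of (6), gluing everything with Lemma~\ref{lem_commute}. One small correction: the worry about $x\ne z$ being ``preserved'' is misplaced and you do not need Corollary~\ref{cor_matrix-computation} here, because the base vector $\bm{x}$ never changes in these lemma statements---the hypothesis $x\ne z$ in (b2), (b5), (6) always refers to the first and third coordinates of the same fixed $\bm{x}$, not of some intermediate transform.
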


\begin{proof}
We prove claim (2). 
Suppose that $x \ne z$. 
We may assume that $p< p'$. 
(The proof for the case $p \ge p'$ can be treated in the same manner.) 
We have 
\begin{eqnarray*}
(\mathfrak{b}_R, M_1^{p-1} M_3^{p'-1} M_2^q \bm{x}) &=& (\sigma_1^{-1} \sigma_3^{-1} \circ \ls \circ \rs) (\mathfrak{b}_L, M_1^p M_3^{p'} M_2^q \bm{x}) 
\hspace{2mm}  \mbox{(Lemma~\ref{lem_step-braid}(b1))}, 
\\
(\mathfrak{b}_R, M_1^{p-1} M_3^{p-1} M_2^q \bm{x}) &=& 
(\sigma_3^{-(p'-p)}  \circ \rs^{p'-p}) (\mathfrak{b}_R, M_1^{p-1} M_3^{p'-1} M_2^q \bm{x}) \hspace{2mm}  \mbox{(Lemma~\ref{lem_step-braid}(b4))}, 
\\
(\mathfrak{b}_R,  M_2^q \bm{x}) &=& 
((\sigma_1\sigma_3)^{-(p-1)} \circ \rs^{2p-2}) (\mathfrak{b}_R, M_1^{p-1} M_3^{p-1} M_2^q \bm{x})  \hspace{2mm}  \mbox{(Lemma~\ref{lem_step-braid}(b2))}, 
\\
(\mathfrak{b}_L, \bm{x}) &=& (\sigma_2^q \circ \ls^{3(q-1)} \circ \ls^2  \circ \rs \circ \ls^2)  (\mathfrak{b}_R,  M_2^q \bm{x}) \hspace{2mm} \mbox{(Lemma~\ref{lem_step-braid}(b5),(6))}. 
\end{eqnarray*}
By  the above equalities together with Lemma~\ref{lem_commute}, 
we obtain 
$$(\mathfrak{b}_L,   \bm{x}) =  
(\sigma_2^q \sigma_1^{-p} \sigma_3^{-p'} \circ \ls^{3q-1} \hspace{-2mm} \circ \rs \circ \ls^2 \circ \rs^{p+p'-2} \circ \ls \circ \rs) (\mathfrak{b}_L, M_1^p M_3^{p'} M_2^q \bm{x}).$$
This completes the proof of (2). 
The proof of claim (1) is left to the reader. 
\end{proof}

\begin{lem}
\label{lem_degenerate-3} 
Let $q, s \in {\mathbb N}$. 
Let $\bm{x}= \left(\begin{smallmatrix} x \\ y \\ z \end{smallmatrix}\right) >\bm{0}$. 
\begin{enumerate}
\item[(a'1)] 
$(\mathfrak{a}'_R, M_3^{s-1} M_2^q \bm{x}) = (\sigma_3^{-1} \circ \rs \circ  \lrs) (\mathfrak{b}_L, M_3^s M_2^q \bm{x})$. 

\item[(a'2)] 
$(\mathfrak{a}'_R, M_3^{s-1} M_2^q \bm{x}) = (\sigma_3^{-1} \circ \rs^2) (\mathfrak{a}'_R, M_3^s M_2^q \bm{x}) $. 

\item[(a'3)] 
$(\mathfrak{b}_L,  M_2^{q-1} \bm{x}) = ( \sigma_2 \circ \ls^3) (\mathfrak{a}'_R,  M_2^q \bm{x})$ if $x \ne z$. 
\end{enumerate}
\end{lem}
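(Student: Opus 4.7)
\medskip
\noindent\textbf{Proof proposal.} The three claims have the same flavour as the corresponding parts of Lemma~\ref{lem_step-braid} (Box B of the automaton) and of the yet-to-be-proved Lemma~\ref{lem_degenerate-1} (Box A): they describe a single ``move'' in the Box A' portion of Figure~\ref{fig_braid_automaton}, plus the transition out of Box A' back into $(\mathfrak{b}_L,\cdot)$. The plan is to reduce to $q=1$, carry out explicit pictorial splittings on the measured train tracks $(\mathfrak{b}_L,\cdot)$ and $(\mathfrak{a}'_R,\cdot)$ (drawn as in Figure~\ref{fig_braid_symmetry}), and identify the resulting measured train tracks via diffeomorphisms induced by $\sigma_3$ and $\sigma_2$.

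First I would reduce to $q=1$: a direct calculation gives
$M_2^q \bm{x} = \left(\begin{smallmatrix} x \\ qx+y+qz \\ z\end{smallmatrix}\right),$
so the first and third coordinates are preserved by $M_2^q$. Hence the hypothesis $x\ne z$ in (a'3) propagates, and it suffices to prove each claim for the positive vector $\bm{x}' \colonequals M_2^{q-1}\bm{x}$ in place of $\bm{x}$, i.e. for $q=1$. From here the argument is the same template used in Lemma~\ref{lem_step-braid}: write down the weights of the branches of the relevant measured train track after applying $M_3^s M_2$ or $M_3^{s-1}M_2$, identify the large branches, compare their weights to decide the splitting types, and perform the splittings graphically.

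For (a'1) I would start with $(\mathfrak{b}_L, M_3^s M_2 \bm{x})$, with weight vector $\left(\begin{smallmatrix} x \\ y' \\ sy'+z\end{smallmatrix}\right)$ where $y'=x+y+z$, and identify its two large branches. One large branch has weight strictly larger than the other, so the first maximal splitting may be mixed ($\lrs$): this is exactly where, on one large branch the left splitting occurs and on another the right splitting occurs, yielding an intermediate train track that topologically matches the diagram in Figure~\ref{fig_braid_symmetry} but with $M_3^{s}$ replaced by a shifted configuration. A subsequent right maximal splitting then produces $\sigma_3(\mathfrak{a}'_R, M_3^{s-1}M_2\bm{x})$, which after applying $\sigma_3^{-1}$ gives the target. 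Claim (a'2) is the purely right-splitting variant: starting from $(\mathfrak{a}'_R, M_3^sM_2\bm{x})$, two consecutive right maximal splittings at the two large branches absorb one copy of $M_3$ and push through $\sigma_3$, mirroring Lemma~\ref{lem_step-braid}(b2).

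Finally (a'3) concerns the transition out of Box A'. In the case $x\ne z$ (the only case asserted), the measured train track $(\mathfrak{a}'_R, M_2\bm{x})$ with weight $\left(\begin{smallmatrix} x \\ x+y+z \\ z\end{smallmatrix}\right)$ admits three consecutive left maximal splittings (WLOG $x<z$) whose composition realises the diffeomorphism $\sigma_2^{-1}$, landing on $(\mathfrak{b}_L,\bm{x})$; this is structurally the same computation as Lemma~\ref{lem_step-braid}(6) with $\mathfrak{b}_L$ replaced by $\mathfrak{a}'_R$ on the source side, and the sequence passes through the intermediate train track $\mathfrak{s}$ of Figure~\ref{fig_braid_symmetry}(5).

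The main obstacle I expect is not logical but combinatorial/pictorial: correctly identifying which branches of $\mathfrak{b}_L$ and $\mathfrak{a}'_R$ are large and comparing their weights, so that (a'1) really produces a mixed maximal splitting (not two separate right or left splittings) and so that the intermediate train tracks at each stage are honestly equal to the pictured $\mathfrak{a}'_R$ and $\mathfrak{s}$ up to isotopy. As in Lemmas~\ref{lem_step-maximal-splitting} and \ref{lem_step-braid}, this is best handled by providing explicit figures for each step and citing them; once the pictures are fixed, the algebra of weights and the commutation with the diffeomorphisms $\sigma_3,\sigma_2$ via Lemma~\ref{lem_commute} is routine.
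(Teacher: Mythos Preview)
Your approach is exactly the paper's: reduce to $q=1$, compute the weight vector $\left(\begin{smallmatrix} x \\ y' \\ sy'+z\end{smallmatrix}\right)$ with $y'=x+y+z$, and verify the claims by explicit pictorial splittings (the paper does (a'1), (a'2), (a'3) via three separate figures, with (a'3) passing through $(\mathfrak{s},M_2\bm x)$ just as you describe).

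There is one slip worth correcting. You write ``one large branch has weight strictly larger than the other, so the first maximal splitting may be mixed ($\lrs$)''. This is backwards: a maximal splitting splits \emph{only} the large branches of maximal weight, so a mixed splitting $\lrs$ requires at least two large branches of \emph{equal} maximal weight, one splitting left and one right. In $(\mathfrak b_L,\left(\begin{smallmatrix} x \\ y' \\ sy'+z\end{smallmatrix}\right))$ the two large branches both carry the weight $(s+1)y'+z$ (one coming from the $y'$-branch meeting the $(sy'+z)$-branch on each side of the third puncture), and they split in opposite directions; that is the source of the $\lrs$. By contrast, in Lemma~\ref{lem_step-braid}(b1) the analogous two large branches have distinct weights, which is why there one gets $\rs$ followed by $\ls$ instead of a single $\lrs$. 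Once you straighten this out, the rest of your plan goes through verbatim.
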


\begin{proof}
It is sufficient to prove the lemma when $q=1$. 
Consider the maximal splitting starting from $ (\mathfrak{b}_L, M_3^s M_2 \bm{x} = \left(\begin{smallmatrix} x \\ y' \\ sy'+z \end{smallmatrix}\right))$, 
where $y' = x+y+z$. 
\begin{figure}[ht]
\centering
\includegraphics[height=3.5cm]{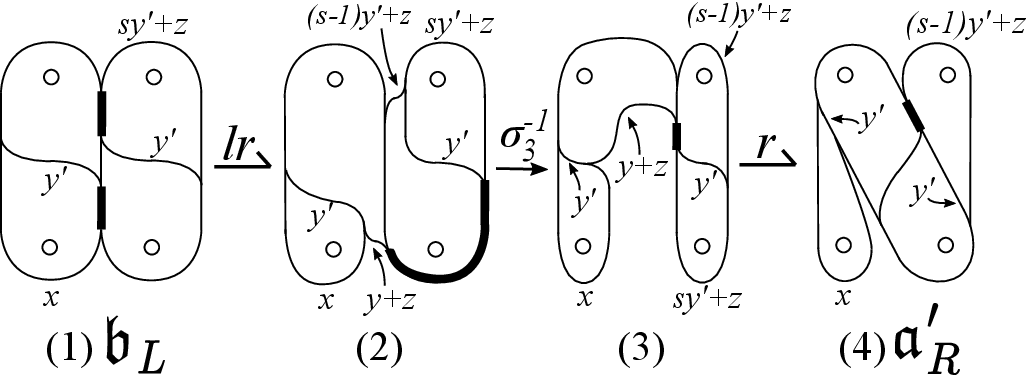}
\caption{Proof of Lemma~\ref{lem_degenerate-3}(a'1). 
(1) $ (\mathfrak{b}_L, M_3^s M_2 \bm{x})$.\\
(4) $(\mathfrak{a}'_R, M_3^{s-1} M_2 \bm{x})$.}
\label{fig_mix_s3}
\end{figure}
Figure~\ref{fig_mix_s3} shows that 
$$
(\mathfrak{a}'_R, M_3^{s-1} M_2 \bm{x}) 
= (\rs \circ \sigma_3^{-1} \circ  \lrs) (\mathfrak{b}_L, M_3^s M_2 \bm{x}) 
= (\sigma_3^{-1} \circ \rs \circ \lrs) (\mathfrak{b}_L, M_3^s M_2 \bm{x}). 
$$
The proof of claim (a'1) is done. 
For the proof of claim (a'2), 
see Figure~\ref{fig_mix_s3-part2}. 
\begin{figure}[ht]
\centering
\includegraphics[height=3.5cm]{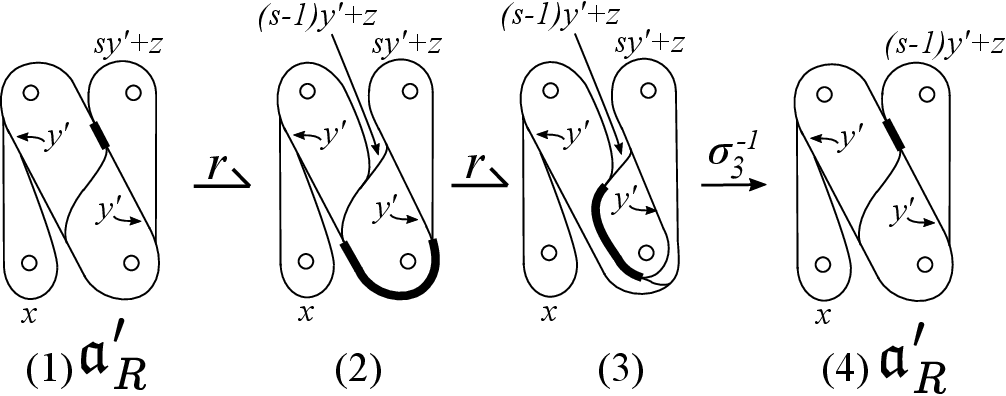}
\caption{Proof of Lemma~\ref{lem_degenerate-3}(a'2). 
(1) $(\mathfrak{a}'_R, M_3^s M_2 \bm{x} = \left(\begin{smallmatrix} x \\ y' \\ sy'+z \end{smallmatrix}\right))$, where $y' = x+y+z$. 
(4) $(\mathfrak{a}'_R, M_3^{s-1} M_2 \bm{x})$.}
\label{fig_mix_s3-part2}
\end{figure}

We  prove claim (a'3). 
Consider the measured train track $(\mathfrak{a}'_R,  M_2 \bm{x})$. 
We may suppose that $x < z$. 
Applying $3$ maximal splittings consecutively, 
we obtain $3$ left maximal splittings 
$ (\mathfrak{a}'_R,  M_2 \bm{x}) \ls  (\mathfrak{s}, M_2 \bm{x}) \ls^2  \sigma_2^{-1} (\mathfrak{b}_L,  \bm{x})$. 
See Figure~\ref{fig_mix_s3-part3}. 
We finished the proof. 
\begin{figure}[ht]
\centering
\includegraphics[height=3.5cm]{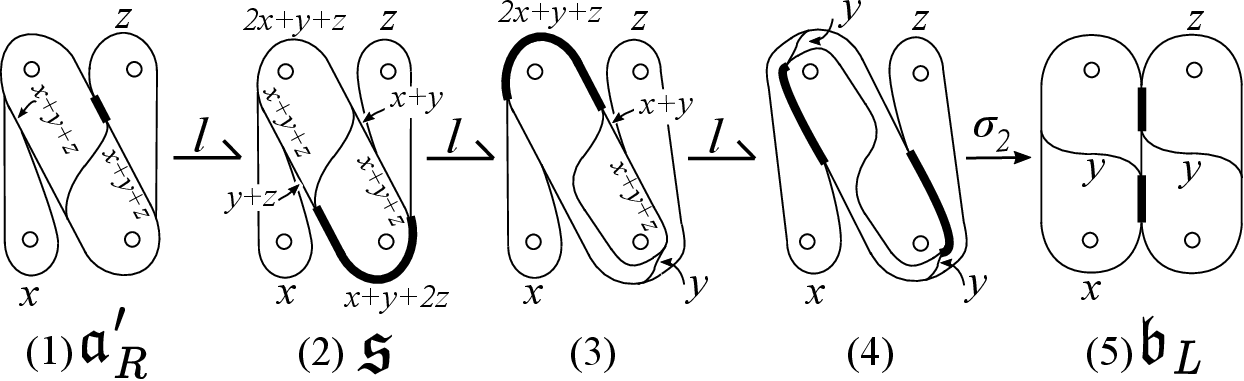}
\caption{Proof of Lemma~\ref{lem_degenerate-3}(a'3). 
(1) $ (\mathfrak{a}'_R,  M_2 \bm{x})$. (2) $(\mathfrak{s}, M_2 \bm{x})$. (4) $(\mathfrak{b}_L,  \bm{x})$.}
\label{fig_mix_s3-part3}
\end{figure}
\end{proof}

Recall that $\Delta = \sigma_1 \sigma_2 \sigma_3 \sigma_1 \sigma_2 \sigma_1$ is the $\pi$-rotation (Figure~\ref{fig_braid_symmetry}(6)).

\begin{lem}
\label{lem_degenerate-1}
Let $q, s \in {\mathbb N}$. 
Let $\bm{x}= \left(\begin{smallmatrix} x \\ y \\ z \end{smallmatrix}\right) >\bm{0}$ 
and $J= \left(\begin{smallmatrix}0 & 0 & 1 \\0 & 1 & 0 \\1 & 0 & 0\end{smallmatrix}\right)$. 
\begin{enumerate}
\item[(a1)] 
$ \Delta (\mathfrak{a}'_R, M_3^{s-1} M_2^q J\bm{x}) = (\sigma_1^{-1} \circ \rs \circ  \lrs) (\mathfrak{b}_L, M_1^s M_2^q \bm{x})$. 

\item[(a2)] 
$\Delta(\mathfrak{a}'_R, M_3^{s-1} M_2^q J \bm{x}) = (\sigma_1^{-1} \circ \rs^2 \circ \Delta) (\mathfrak{a}'_R, M_3^s M_2^q J\bm{x}) $. 

\item[(a3)] 
$(\mathfrak{b}_L,  M_2^{q-1} \bm{x}) = ( \sigma_2 \circ \ls^3 \circ \Delta) (\mathfrak{a}'_R,  M_2^q J \bm{x})$ if $x \ne z$. 
\end{enumerate}
\end{lem}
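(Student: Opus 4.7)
The plan is to derive Lemma~\ref{lem_degenerate-1} from Lemma~\ref{lem_degenerate-3} by exploiting the $\pi$-rotation symmetry encoded by $\Delta = \sigma_1\sigma_2\sigma_3\sigma_1\sigma_2\sigma_1$. First I would verify the standard braid conjugation relations $\Delta \sigma_i \Delta^{-1} = \sigma_{4-i}$ for $i\in\{1,2,3\}$, so that in particular $\Delta\sigma_3\Delta^{-1}=\sigma_1$ and $\Delta\sigma_2\Delta^{-1}=\sigma_2$. On the matrix side, a direct computation gives $JM_1J=M_3$, $JM_3J=M_1$, and $JM_2J=M_2$, from which $JM_3^s M_2^q J = M_1^s M_2^q$ for all $s,q\in\mathbb{N}_0$.

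Second, I would pin down the action of $\Delta$ on the measured train tracks that appear. Because $\Delta$ is a $\pi$-rotation and $\mathfrak{b}_L$ is symmetric under this rotation up to swapping the two outer weighted branches, we have $\Delta(\mathfrak{b}_L,\bm{w}) = (\mathfrak{b}_L, J\bm{w})$. The measured train track $\Delta(\mathfrak{a}'_R,\bm{w})$ is already pictured in Figure~\ref{fig_braid_symmetry}(3), so each left-hand side of Lemma~\ref{lem_degenerate-1} matches the $\Delta$-image of the corresponding left-hand side of Lemma~\ref{lem_degenerate-3} after the substitution $\bm{x}\mapsto J\bm{x}$.

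Third, for each of (a'1), (a'2), (a'3) I would substitute $\bm{x}\mapsto J\bm{x}$ in Lemma~\ref{lem_degenerate-3} and apply $\Delta$ to both sides. For (a'1) this yields
\begin{equation*}
\Delta(\mathfrak{a}'_R, M_3^{s-1} M_2^q J\bm{x}) = \Delta\circ(\sigma_3^{-1}\circ\rs\circ\lrs)(\mathfrak{b}_L, M_3^s M_2^q J\bm{x}).
\end{equation*}
Pushing $\Delta$ through the maximal splittings via Lemma~\ref{lem_commute} and using $\Delta\sigma_3^{-1}\Delta^{-1}=\sigma_1^{-1}$ together with $\Delta(\mathfrak{b}_L,\bm{w})=(\mathfrak{b}_L,J\bm{w})$ and $JM_3^s M_2^q J=M_1^s M_2^q$, the right side rewrites as $(\sigma_1^{-1}\circ\rs\circ\lrs)(\mathfrak{b}_L, M_1^s M_2^q \bm{x})$, which is exactly claim (a1). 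Claim (a2) follows in the same way from (a'2), and claim (a3) from (a'3), noting that the asymmetry condition $x\neq z$ is preserved under the swap $\bm{x}\mapsto J\bm{x}$.

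The main obstacle I expect is Step 2: carefully reading off how $\Delta$ relabels the branches of $\mathfrak{b}_L$ (to justify $\Delta(\mathfrak{b}_L,\bm{w})=(\mathfrak{b}_L,J\bm{w})$) and confirming that the pictured $\Delta(\mathfrak{a}'_R,\bm{x})$ in Figure~\ref{fig_braid_symmetry}(3) agrees with the geometric rotation of $(\mathfrak{a}'_R,\bm{x})$ together with the coordinate permutation $J$. Once this bookkeeping is fixed, the rest of the argument is a formal consequence of Lemma~\ref{lem_degenerate-3}, Lemma~\ref{lem_commute}, and the conjugation formulas above.
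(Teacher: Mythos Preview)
Your proposal is correct and follows essentially the same approach as the paper: the paper's proof also uses the key observation $\Delta(\mathfrak{b}_L, M_3^s M_2^q J\bm{x}) = (\mathfrak{b}_L, M_1^s M_2^q \bm{x})$ together with the conjugation relations $\Delta\sigma_i^{\pm1}=\sigma_{4-i}^{\pm1}\Delta$ to declare the argument analogous to Lemma~\ref{lem_degenerate-3}. You have simply made the transport via $\Delta$ (and the role of Lemma~\ref{lem_commute} and the matrix identities $JM_iJ=M_{4-i}$) more explicit than the paper does.
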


\begin{proof}
Observe that 
$\Delta (\mathfrak{b}_L, M_3^s M_2^q J \bm{x}) = (\mathfrak{b}_L, M_1^s M_2^q \bm{x})$.  
By  
$\Delta \sigma_i^{\pm 1}= \sigma_j^{\pm 1} \Delta$ for the pair $(i,j)= (1,3)$ or $(3,1)$, 
the proof is analogous to that of Lemma~\ref{lem_degenerate-3}. 
\end{proof}

Let $ (\mathfrak{b}_L, M_1^s M_2^q \bm{x})$ or $(\mathfrak{b}_L, M_3^s M_2^q \bm{x})$ be a measured train track, 
where the measures are preceded by a type A and A' block respectively. We now compute the maximal splittings of the measured train tracks.

\begin{prop}[Type A/A' block for (1)/(2)]
\label{prop_typeC}
Let $q, s \in {\mathbb N}$. 
Let $\bm{x}= \left(\begin{smallmatrix} x \\ y \\ z \end{smallmatrix}\right) >\bm{0}$. 
\begin{enumerate}
\item[(1)] 
$(\mathfrak{b}_L, \bm{x}) = (\sigma_2^q \sigma_1^{-s}  \circ  \ls^{3q} \circ \rs^{2s-1} \circ  \lrs) (\mathfrak{b}_L, M_1^s M_2^q \bm{x})$. 

\item[(2)] 
$(\mathfrak{b}_L, \bm{x}) = (\sigma_2^q \sigma_3^{-s} \circ \ls^{3q} \circ \rs^{2s-1} \circ  \lrs ) (\mathfrak{b}_L, M_3^s M_2^q \bm{x})$. 

\end{enumerate}
\end{prop}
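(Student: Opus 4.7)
The plan is to prove (2) first using Lemma \ref{lem_degenerate-3}, and then obtain (1) by running the identical argument with Lemma \ref{lem_degenerate-1} in place of Lemma \ref{lem_degenerate-3}. In both cases one reduces from $(\mathfrak{b}_L, M_i^s M_2^q \bm{x})$ back to $(\mathfrak{b}_L, \bm{x})$ by first stripping the $M_i^s$ factor via the iterated ``degenerate'' moves, then stripping the $M_2^q$ factor via the absorption lemma for $\sigma_2$.

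For (2), one application of Lemma \ref{lem_degenerate-3}(a'1) converts the type from $\mathfrak{b}_L$ into $\mathfrak{a}'_R$, consuming $\lrs$ then $\rs$ and extracting a factor $\sigma_3^{-1}$, reaching $(\mathfrak{a}'_R, M_3^{s-1}M_2^q\bm{x})$. Iterating Lemma \ref{lem_degenerate-3}(a'2) a total of $s-1$ times reduces the exponent of $M_3$ to zero; each step contributes $\sigma_3^{-1}$ and $\rs^2$. Collecting the $\sigma_3^{-1}$'s to the left via Lemma \ref{lem_commute} gives
\[(\mathfrak{a}'_R, M_2^q \bm{x}) = (\sigma_3^{-s} \circ \rs^{2s-1} \circ \lrs)(\mathfrak{b}_L, M_3^s M_2^q \bm{x}).\]
A single use of Lemma \ref{lem_degenerate-3}(a'3) converts back to $\mathfrak{b}_L$ and lowers the $M_2$-exponent by one, after which $q-1$ iterations of Lemma \ref{lem_step-braid}(6) bring us down to $(\mathfrak{b}_L, \bm{x})$, each contributing $\sigma_2 \circ \ls^3$. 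A final commutation of the mapping-class factors past the splitting arrows assembles the stated product.

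For (1), the three-stage procedure is identical but uses Lemma \ref{lem_degenerate-1} instead. The relations (a1) and (a2) carry an extra conjugation by $\Delta$ on the $\mathfrak{a}'_R$-side and a coordinate swap $J$ in the measure. Crucially, $\Delta$ sits outside $(\mathfrak{a}'_R,\cdot)$ in both (a1) and (a2), so it propagates unchanged through the iteration; it is then consumed by the final application of (a3), which returns us to $(\mathfrak{b}_L, M_2^{q-1} \bm{x})$. The remaining $q-1$ applications of Lemma \ref{lem_step-braid}(6) are unchanged, and the only difference in the output is that every factor $\sigma_3^{\pm 1}$ is replaced by $\sigma_1^{\pm 1}$.

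The main technical care is the bookkeeping: each lemma application produces interleaved factors of $\sigma_i^{\pm 1}$, the $\pi$-rotation $\Delta$, and splitting arrows, so Lemma \ref{lem_commute} must be invoked repeatedly to push all mapping-class factors to the leftmost position. I should also flag that the iterated use of Lemma \ref{lem_step-braid}(6) and of (a'3)/(a3) requires $x \ne z$ throughout the descent, and this is automatic in the intended application to Theorem \ref{thm_5-punctured-sphere-precise} for asymmetric $\bm{p}$ by Corollaries \ref{cor_PF-eivenvector-13} and \ref{cor_matrix-computation}.
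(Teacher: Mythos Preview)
Your proof is correct and follows exactly the approach the paper sketches: apply Lemma~\ref{lem_degenerate-3} (resp.\ Lemma~\ref{lem_degenerate-1}) in the order (a'1), iterated (a'2), then (a'3) (resp.\ (a1), iterated (a2), then (a3)), followed by $q-1$ applications of Lemma~\ref{lem_step-braid}(6), commuting all mapping-class factors to the left via Lemma~\ref{lem_commute}. Your flag that the argument implicitly needs $x\ne z$ (for (a'3), (a3), and the asymmetric branch of Lemma~\ref{lem_step-braid}(6)) is well taken; the paper does not state this hypothesis in the proposition, but it is automatically satisfied in the only place the proposition is used, namely the asymmetric case of Theorem~\ref{thm_5-punctured-sphere-precise}.
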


\begin{proof}
By a similar argument as in the proof of Proposition~\ref{prop_typeB}, one can prove claims (1) and (2). 
In the case of Proposition~\ref{prop_typeB}, we used  Lemma~\ref{lem_step-braid}.  
For the proof of  (1) (resp. (2)), we use 
Lemma~\ref{lem_degenerate-1} (resp. Lemma~\ref{lem_degenerate-3}) together with Lemma~\ref{lem_step-braid}(6). 
\end{proof}

\begin{proof}[Proof of Theorem~\ref{thm_5-punctured-sphere-precise}]
As in the proof of Theorem~\ref{thm_2-torus}, for a Perron-Frobenius eigenvector $\bm{v}$ of $M_{\bm{p}}$ 
we define positive vectors 
$\bm{x}^{(0)} \colonequals  \bm{v}$ and 
$\bm{x}^{(i)} \colonequals  M_1^{p_i} M_3^{p_i'} M_2^{q_i} \bm{x}^{(i-1)} $ for $i \in \{1, \dots, n\}$.  
Suppose that $\bm{p}$ is asymmetric. 
By Propositions~\ref{prop_typeB}(2) and \ref{prop_typeC}, we have 
$$
(\mathfrak{b}_L, \bm{x}^{(i-1)}) = (\sigma_2^{q_i} \sigma_1^{-p_i} \sigma_3^{-p_i'} \circ  \rightharpoonup^{A_i+ 3q_i} ) (\mathfrak{b}_L, \bm{x}^{(i)} ) 
\hspace{2mm}\mbox{for\ } i \in \{1, \dots, n\}, 
$$
where $A_i= A_i(\bm{p})$ is the positive integer defined in Section~\ref{section_Introduction}. 
This gives us 
$$(\mathfrak{b}_L, \bm{v} = \bm{x}^{(0)}) = 
(\phi_{\bm{p}}^{-1} \circ   \rightharpoonup^{A_1+ 3q_1}   \circ  \cdots \circ \rightharpoonup^{A_n+ 3q_n}) (\mathfrak{b}_L,  \lambda_{\bm{p}} \bm{v} = \bm{x}^{(n)}).$$ 
This means that 
$$(\mathfrak{b}_0, \mu_0)= (\mathfrak{b}_L, \lambda_{\bm{p}} \bm{v}) \rightharpoonup^{A_n+ 3q_n}  \cdots \rightharpoonup^{A_1+ 3q_1} \phi_{\bm{p}}(\mathfrak{b}_L, \bm{v} ) = 
(\mathfrak{b}_{\ell}, \mu_{\ell})$$
is an Agol cycle of $\phi_{\bm{p}}$ with length $\ell$.  
The consecutive $A_i+ 3q_i$ maximal splittings $\rightharpoonup^{A_i+ 3q_i} $ are given by 
Proposition~\ref{prop_typeB}(2) 
when  the $i$-th block of $\phi_{\bm{p}}$ is of type $B$. 
The maximal splittings are given by Propositions~ \ref{prop_typeC} 
when the $i$-th block is of type $A$ or $A'$. 

The proof of the theorem when $\bm{p}$ is symmetric is left to the reader. 
\end{proof}

\begin{ex}\label{ex_simple_agol_cycle-braid}
We present 2 examples for Agol cycles and their total splitting numbers. 
\begin{enumerate}
\item 
For $\bm{p}= (1,2,1) \in \mathcal{I}_1$ asymmetric, 
an Agol cycle of $\phi_{\bm{p}}$ is given by 
$$(\mathfrak{b}_L, \lambda_{\bm{p}} \bm{v}_{\bm{p}}) \rs\  \ls\   \rs \  \ls^2\  \rs\  \ls^2 \phi_{\bm{p}} (\mathfrak{b}_L, \bm{v}_{\bm{p}})$$ 
whose length is $ 8$. 
The splitting number of each maximal splitting  is $1$, 
except for the first maximal splitting $\rs$ whose splitting number is $2$ 
(Figure~\ref{fig_rl13}(1)(2)).  
Hence, we have $N(\phi_{\bm{p}})= 9$. 

\item
For $\bm{p}= (1,0,1, 0,1,1) \in \mathcal{I}_2$ asymmetric, 
an Agol cycle of $\phi_{\bm{p}}$ is given by 
$$(\mathfrak{b}_L, \lambda_{\bm{p}} \bm{v}_{\bm{p}} ) 
\lrs\ \rs\ \ls^3 \  \lrs\ \rs\ \ls^3 \phi_{\bm{p}}(\mathfrak{b}_L,  \bm{v}_{\bm{p}}),$$
whose length is $10$. 
The splitting number of each maximal splitting  is $1$, 
except for the $2$ mixed maximal splittings $\lrs$, 
whose splitting number is $2$ 
(Figure~\ref{fig_mix_s3}(1)(2)).  
Hence, we have $N(\phi_{\bm{p}}) = 12$. 
\end{enumerate}
\end{ex}

\begin{thm}
\label{thm_total_FD} 
For $ \bm{p}  \in  \mathcal{I}_n$ 
the total splitting number of an Agol cycle of $\phi_{\bm{p}}$ is given by 
We have $N(\phi_{\bm{p}}) = \sum_{i=1}^n (A_i(\bm{p})+ 4q_i)  $. 
\end{thm}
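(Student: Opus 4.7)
The plan is to count the splitting number of each individual maximal splitting in the explicit Agol cycle produced by Theorem~\ref{thm_5-punctured-sphere-precise}, and then sum. By Definition~\ref{definition_splitting-number}(3) the total splitting number decomposes additively over the consecutive maximal splittings, so it suffices to verify, for each $i\in\{1,\dots,n\}$, that the consecutive maximal splittings $\rh^{A_i(\bm{p})+3q_i}$ corresponding to the $i$-th block of $\phi_{\bm{p}}$ contribute exactly $A_i(\bm{p})+4q_i$ to $N(\phi_{\bm{p}})$. Since the block has Agol-cycle length $A_i(\bm{p})+3q_i$, this amounts to locating precisely $q_i$ ``extra'' splittings per block --- maximal splittings of splitting number $2$ rather than $1$.

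First, I would determine the splitting number of each elementary maximal splitting appearing in Lemmas~\ref{lem_step-braid}, \ref{lem_degenerate-3}, and \ref{lem_degenerate-1} by direct inspection of the figures that prove those lemmas, where the large branches of maximal weight are already highlighted. The pattern to be verified has three pieces: first, the initial $\rs$ of Lemma~\ref{lem_step-braid}(b1) and the mixed splittings $\lrs$ of Lemmas~\ref{lem_degenerate-3}(a'1) and \ref{lem_degenerate-1}(a1) each have splitting number $2$; second, every other elementary splitting used in those lemmas splits a single large branch in the asymmetric weight regime; third, each iterated application of Lemma~\ref{lem_step-braid}(6) produces a block $\ls^3$ whose splitting numbers are $(1,1,2)$, in direct analogy with the degenerate final left splitting already exhibited in Figure~\ref{fig_left_split} during the proof of Theorem~\ref{thm_total_FT}.

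Granting this bookkeeping, the block-by-block accounting is direct. For a type B asymmetric block, Proposition~\ref{prop_typeB}(2) gives the decomposition $\rs\,\ls\,\rs^{p_i+p_i'-2}\,\ls^2\,\rs\,\ls^{3q_i-1}$; one extra from the initial $\rs$ together with one extra from each of the $q_i-1$ iterated Lemma~\ref{lem_step-braid}(6) applications yields $(A_i(\bm{p})+3q_i)+q_i=A_i(\bm{p})+4q_i$. For a type A or A$'$ block, Proposition~\ref{prop_typeC} gives a parallel decomposition in which one extra arises from the initial $\lrs$ and $q_i-1$ more from the iterated Lemma~\ref{lem_step-braid}(6) applications, again producing $A_i(\bm{p})+4q_i$. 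The symmetric case is handled by the same scheme with the symmetric branches of the lemmas. Summing over $i=1,\dots,n$ gives the stated formula. Example~\ref{ex_simple_agol_cycle-braid} supplies consistency checks for both the type B and the type A/A$'$ situations at $q_i=1$.

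The principal obstacle is the combinatorial verification that the ``extras'' occur in exactly the locations flagged above, neither more nor fewer. Concretely, one must confirm across three block types (A, A$'$, B), both cyclic orderings of $p_i$ against $p_i'$, and both symmetric and asymmetric weight regimes, that every maximal splitting outside the flagged list splits a single large branch, and that each flagged splitting has splitting number exactly $2$ rather than any higher value. This is the $F_D$ counterpart of the case analysis behind Theorem~\ref{thm_total_FT} in the $F_T$ setting, and is complicated here by the presence of the type A and A$'$ blocks and the mixed maximal splittings $\lrs$ they introduce.
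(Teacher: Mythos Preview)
Your approach is essentially the paper's own: decompose the Agol cycle into the block pieces supplied by Propositions~\ref{prop_typeB} and~\ref{prop_typeC}, read off the splitting number of each elementary maximal splitting from the figures behind Lemmas~\ref{lem_step-braid}, \ref{lem_degenerate-3}, \ref{lem_degenerate-1}, and verify that each block contributes $A_i(\bm{p})+4q_i$. Your identification of the ``extras'' (one from the initial $\rs$ or $\lrs$, and one per application of Lemma~\ref{lem_step-braid}(6)) and the resulting count $q_i$ extras per block are correct.

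One small inaccuracy: the pattern for Lemma~\ref{lem_step-braid}(6) in the asymmetric case is $(2,1,1)$, not $(1,1,2)$. The step $(\mathfrak{b}_L,M_2\bm{x})\ls(\mathfrak{s},M_2\bm{x})$ splits two large branches, while the remaining two $\ls$ coincide with the tail of (b5) and hence each split a single branch (as Example~\ref{ex_simple_agol_cycle-braid}(1) confirms). The analogy with Figure~\ref{fig_left_split} for $F_T$ breaks down here because the large-branch structure of $\mathfrak{b}_L$ differs from that of $\mathfrak{b}$. This does not affect your total of $4$ per application of (6), so the final count stands.
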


\begin{proof}
For each finite sequence of maximal splittings given by Propositions~\ref{prop_typeB} and \ref{prop_typeC}, 
we compute its total splitting number. 
For instance, take a finite sequence 
$$(\mathfrak{b}_L, M_1^{p_i} M_2^{q_i} {\bm x}) \lrs\  \rs^{2p_i-1}\   \ls^{3q_i} \sigma_1^{p_i} \sigma_2^{-q_i} (\mathfrak{b}_L, \bm{x})$$ 
given by Proposition~\ref{prop_typeC}(1). 
Counting the large branches with maximal weight in each maximal splitting, one sees that 
its total splitting number is $2p_i+ 4q_i (= A_i(\bm{p})+ 4q_i)$. 
One can prove the total splitting number  of the Agol cycle for $\phi_{\bm{p}}$ given by Theorem~\ref{thm_5-punctured-sphere-precise} 
equals  the sum of $A_i(\bm{p})+ 4q_i$ over $i$, that is 
$ \sum_{i=1}^n (A_i(\bm{p})+ 4q_i)$. 
\end{proof}

\begin{proof}[Proof of Theorem~\ref{thm_additive}]
Theorems~\ref{thm_total_FT} and \ref{thm_total_FD} immediately give the desired statement. 
\end{proof}

 \section{Conjugacy classes of pseudo-Anosov maps in  \texorpdfstring{$F_T$}{Lg} and \texorpdfstring{$F_D$}{Lg}}
 \label{section_applications}
 
In the final section 
we 
classify conjugacy classes of pseudo-Anosov maps in the semigroups $F_T$ and $F_D$. 
To do this, we define maps $T: \mathbb{N}_0^{3n} \rightarrow {\mathbb N}_0^{3n}$, called  the {\em shift}, and 
$f: {\mathbb N}_0^{3n}  \rightarrow {\mathbb N}_0^{3n} $, called the {\em flip}, as follows. 
For $ \bm{p}=  (p_n, p_n', q_n, \dots, p_1, p_1', q_1)  \in  {\mathbb N}_0^{3n}$ 
\begin{eqnarray*}
T(\bm{p})&=& (p_{n-1}, p_{n-1}', q_{n-1}, \dots, p_1, p_1', q_1, p_n, p_n', q_n), 
\\
f(\bm{p})&=&  (p_n', p_n, q_n, \dots, p_1', p_1, q_1). 
\end{eqnarray*}
The shift $T$ permutes by three entries and the flip $f$ interchanges $p_i$ and $p_i'$ for all $i \in \{1, \dots, n\}$. 
Note that  $\bm{p}$ is symmetric if and only if 
the flip $f$ preserves $\bm{p}$, i.e.  $f(\bm{p})= \bm{p}$.
 Let $\bm{p}  \in  \mathcal{I}_n$ and $\bm{t}  \in  \mathcal{I}_m$. 
 We write $\bm{p} \sim \bm{t}$ if $n= m$ and $ T^k(\bm{p}) \in \{\bm{t}, f(\bm{t})\}$  for some $k \ge 0$.

\begin{thm}
\label{thm_conjugacy-class}
Let $ \bm{p}  \in  \mathcal{I}_n$ and $\bm{t}  \in  \mathcal{I}_m$. 
The following are equivalent. 
\begin{enumerate}
\item[(1)] 
$\bm{p} \sim \bm{t}$. 

\item[(2)] 
$\Phi_{\bm{p}}$ and $\Phi_{\bm{t}}$ are conjugate in $\mathrm{MCG}(\Sigma_{1,2})$. 

\item[(3)] 
$\phi_{\bm{p}}$ and $\phi_{\bm{t}}$ are conjugate in $\mathrm{MCG}(\Sigma_{0,5})$. 
\end{enumerate}
\end{thm}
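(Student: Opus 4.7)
The plan is to establish the equivalences through the chain $(1) \Rightarrow (2) \Rightarrow (1)$ and $(1) \Rightarrow (3) \Rightarrow (1)$, using explicit conjugating diffeomorphisms for the forward directions and the conjugacy invariance of Agol cycles (Theorem~\ref{thm_Hodgson-Issa-Segerman}) for the backward ones.

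For $(1) \Rightarrow (2)$, I split into two subcases. If $\bm{t} = T^k(\bm{p})$, write $\Phi_{\bm{p}} = C_n \cdots C_1$ with $C_i \colonequals \delta_1^{p_i} \delta_3^{p_i'} \delta_2^{-q_i}$. Then $\Phi_{T(\bm{p})} = C_{n-1} \cdots C_1 C_n = C_n^{-1} \Phi_{\bm{p}} C_n$, and iterating $k$ times yields a conjugacy between $\Phi_{\bm{p}}$ and $\Phi_{T^k(\bm{p})}$. If $\bm{t} = f(\bm{p})$, I produce an orientation-preserving diffeomorphism $\iota_T$ of $\Sigma_{1,2}$ that exchanges $c_1$ with $c_3$ while fixing $c_2$ setwise; such a symmetry exists by inspection of Figure~\ref{fig_t_track}. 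This gives $\iota_T \delta_1 \iota_T^{-1} = \delta_3$, $\iota_T \delta_3 \iota_T^{-1} = \delta_1$ and $\iota_T \delta_2 \iota_T^{-1} = \delta_2$, whence $\iota_T \Phi_{\bm{p}} \iota_T^{-1} = \Phi_{f(\bm{p})}$. The argument for $(1) \Rightarrow (3)$ is identical, using an analogous involution $\iota_D$ on $\Sigma_{0,5}$ swapping $\alpha_1$ and $\alpha_3$.

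For $(2) \Rightarrow (1)$ (and analogously $(3) \Rightarrow (1)$), suppose $\Phi_{\bm{p}}$ is conjugate to $\Phi_{\bm{t}}$. By Theorem~\ref{thm_Hodgson-Issa-Segerman} their periodic splitting sequences are combinatorially isomorphic, so their Agol cycles are equivalent. Reading the cyclic word of splitting types ($\rs$, $\ls$ and $\lrs$, with multiplicities) from the explicit Agol cycle in Theorem~\ref{thm_2-torus} (respectively Theorem~\ref{thm_5-punctured-sphere-precise}), and using Remark~\ref{rem_commute} to see that this word is a genuine invariant of the equivalence class of Agol cycles, I deduce that the block length $n$ agrees and that the cyclic sequence of pairs $(q_i, p_i + p_i')_i$ (respectively of $(q_i, A_i(\bm{p}))$) for $\bm{t}$ is a cyclic rotation of that for $\bm{p}$. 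To distinguish $(p_i, p_i')$ from $(p_i', p_i)$ within a block I compare the measured weights of the train tracks at matched positions along the Agol cycles. Using Corollaries~\ref{cor_PF-eivenvector-13} and \ref{cor_same-dilatation}(2), the orientation-preserving diffeomorphism realizing the combinatorial isomorphism must either act as the identity on the initial train track, forcing $T^k(\bm{p}) = \bm{t}$, or as the flip $\iota$ that swaps the first and third coordinates of $\bm{v}$, forcing $T^k(\bm{p}) = f(\bm{t})$.

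The main obstacle is this last step, namely ruling out any orientation-preserving self-identification of the measured train track beyond the identity and the flip $\iota$. The difficulty is most visible in the $F_D$ setting, where a type $A$ block ($p_i' = 0$) and a type $A'$ block ($p_i = 0$) produce identical splitting patterns $\lrs \, \rs^{2 \cdot -1} \, \ls^{3 q_i}$, so the flip ambiguity cannot be detected at the level of the cyclic word and must be resolved using the transverse measures. The resolution uses that in the asymmetric case $s_{\bm{p}} \ne \tfrac{1}{2}$ by Corollary~\ref{cor_same-dilatation}(2)(3), so that the normalized eigenvector $\bm{v}_{\bm{p}}$ has distinct first and third coordinates; this pins down the conjugating diffeomorphism on the measured train track to be either the identity or $\iota$, and completes the classification.
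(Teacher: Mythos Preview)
Your overall strategy matches the paper's: use explicit conjugacies for $(1)\Rightarrow(2),(3)$, and use Theorem~\ref{thm_Hodgson-Issa-Segerman} together with the explicit Agol cycles for the converse. The step where you read off the cyclic sequence $(p_i+p_i',q_i)$ from the $\rs/\ls$ pattern is exactly what the paper does.

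There is, however, a genuine gap at the point you yourself flag as the ``main obstacle.'' You need: any orientation-preserving diffeomorphism $h$ of $\Sigma_{1,2}$ carrying the train track $\mathfrak{b}$ to itself acts on the weight vector either as the identity or as $J=\left(\begin{smallmatrix}0&0&1\\0&1&0\\1&0&0\end{smallmatrix}\right)$. Your proposed resolution---that $s_{\bm p}\neq\tfrac12$ and hence $\bm v_{\bm p}|_1\neq\bm v_{\bm p}|_3$ ``pins down'' $h$---does not establish this. Distinctness of the weights says nothing about how an arbitrary $h$ could permute the branches of $\mathfrak{b}$; it is a statement about the measure, not about the topology of $\mathfrak{b}\subset\Sigma_{1,2}$. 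The argument you need is purely combinatorial: $h$ must send large branches to large branches, $\mathfrak{b}$ has exactly two large branches, and the orientation-preserving maps realizing the two possible permutations are $\{1,\iota,F,F\circ\iota\}$ (hyperelliptic involution and $\pi$-rotation about $c_2$), each acting on weights by $\mathrm{id}$ or $J$. This is the paper's Claim~1, and it is what makes the rest of your argument go through: once $h$ acts by $\mathrm{id}$ or $J$, one compares $\bm v_{\bm p}$ with $\bm v_{\bm t}$ or $J\bm v_{\bm t}$ and invokes Corollary~\ref{cor_same-dilatation}(2)(3) to rule out the ``mixed'' case and force $T^k(\bm p)\in\{\bm t,f(\bm t)\}$.

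A smaller point: for $(3)\Rightarrow(1)$ the paper does not redo the analysis on $\Sigma_{0,5}$ but simply observes that a conjugacy between $\phi_{\bm p}$ and $\phi_{\bm t}$ lifts through the branched cover to a conjugacy between $\Phi_{\bm p}$ and $\Phi_{\bm t}$, reducing to $(2)\Rightarrow(1)$. This sidesteps the need for a separate Claim~1 for $\mathfrak{b}_L\subset\Sigma_{0,5}$ and avoids the extra bookkeeping with the $A/A'/B$ block types that you anticipate.
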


\begin{proof}
Suppose that 
$\bm{p} \sim \bm{t}$. 
This means that $T^k(\bm{p})= \bm{t}$ or $T^k(\bm{p})= f(\bm{t})$ for some $k \ge 0$. 
By the definition of the shift $T$, $\Phi_{\bm{p}}$ and $\Phi_{T(\bm{p})}$ (resp. $\phi_{\bm{p}}$ and $\phi_{T(\bm{p})}$) are conjugate.
Note that $\Phi_{\bm{p}}$ and $\Phi_{f(\bm{p})}$ (resp. $\phi_{\bm{p}}$ and $\phi_{f(\bm{p})}$) 
    are also conjugate. 
    In this case, a conjugacy is given by $F$ (resp. $\Delta$),
    where $F: \Sigma_{1,2} \rightarrow \Sigma_{1,2}$ is the $\pi$-rotation along the simple closed curve $c_2$ (Figure~\ref{fig_t_track}(1)).

Thus the condition (1) implies the conditions (2) and (3).

To see the that (2) implies (1), 
suppose that  $\Phi_{\bm{p}}$ and $\Phi_{\bm{t}}$ are conjugate in $\mathrm{MCG}(\Sigma_{1,2})$. 
By Theorem~\ref{thm_Hodgson-Issa-Segerman} their periodic splitting sequences are combinatorially isomorphic  and 
their Agol cycle lengths are equal. 
Notice that by Theorem~\ref{thm_2-torus} 
$\bm{p}$ is symmetric if and only if $\bm{t}$ is symmetric. 
We now prove that $\bm{p} \sim \bm{t}$ when both $\bm{p}$ and $\bm{t}$ are asymmetric. 
(The proof for the symmetric case is analogous.) 
Let $\ell$ be the Agol cycle lengths of $\Phi_{\bm{p}}$ and $\Phi_{\bm{t}}$. 
For $ \bm{p}=  (p_n, p_n', q_n, \dots, p_1, p_1', q_1) \in \mathcal{I}_n$ and 
$\bm{t}=  (t_m, t_m', u_m, \dots, t_1, t_1', u_1) \in \mathcal{I}_m$, 
Theorem~\ref{thm_2-torus} tells us that 
\begin{eqnarray*}
 (\mathfrak{b}, \lambda_{\bm{p}} \bm{v}_{\bm{p}}) \rs^{p_n+p_n'} \ls^{3q_n}  \cdots \rs^{p_1+p_1'} \ls^{3q_1} 
 \Phi_{\bm{p}} (\mathfrak{b},  \bm{v}_{\bm{p}}), 
\\
 (\mathfrak{b}, \lambda_{\bm{t}} \bm{v}_{\bm{t}}) \rs^{t_m+t_m'} \ls^{3u_m}  \cdots \rs^{t_1+t_1'} \ls^{3u_1} 
 \Phi_{\bm{t}} (\mathfrak{b},  \bm{v}_{\bm{t}})
\end{eqnarray*}
form Agol cycles of $\Phi_{\bm{p}}$ and $\Phi_{\bm{t}}$ respectively. 
This together with Remark~\ref{rem_commute} implies that 
the cyclically ordered sets 
$\{(p_n+ p_n', 3q_n), \dots, (p_1+ p_1', 3q_1)\}$ and 
$\{(t_m+ t_m', 3u_m), \dots, (t_1+ t_1', 3u_1)\}$ have to be equal. 
In particular, $n= m$. 
Up to the shift $T$, we may assume that 
\begin{quote}
$(*)$ 
$\bm{p}, \bm{t} \in \mathcal{I}_n$ satisfy 
$p_i+ p_i' = t_i+ t_i'$ and $ q_i =  u_i$  for $i= 1, \dots, n$. 
\end{quote}
The following three cases can occur. 
\begin{enumerate}
\item[Case 1.]
$p_i = t_i$ (and $p_i'= t_i'$) for $i= 1,\dots, n$. 

\item[Case 2.] 
$p_i = t'_i$ (and $p_i'= t_i$) for $i= 1,\dots, n$. 

\item[Case 3.] 
Otherwise,.
\end{enumerate}
In case 1 (resp.  case 2) we have  $\bm{p} = \bm{t}$ 
(resp.  $\bm{p} = f(\bm{t})$).  
In both cases it holds $\bm{p} \sim \bm{t}$. 
We will later show that case 3 cannot occur. 
\medskip

\noindent
{\bf Claim 1.} 
Let $(\mathfrak{b}, \bm{x})$ be a measured train track in $\Sigma_{1,2}$ as in Figure~\ref{fig_t_track}(3). 
Let $h: \Sigma_{1,2} \rightarrow \Sigma_{1,2} $ be an orientation-preserving diffeomorphism preserving  the train track $\mathfrak{b}$. 
Then  
$h(\mathfrak{b}, \bm{x}) = (\mathfrak{b}, \bm{x})$ or $h(\mathfrak{b}, \bm{x}) = (\mathfrak{b}, J\bm{x})$, 
where $J$ is the matrix as in Lemma~\ref{lem_degenerate-1}. 
\medskip
\\
Proof of Claim 1. 
Let $\iota: \Sigma_{1,2} \rightarrow \Sigma_{1,2}$ be the hyperelliptic involution, exchanging the two punctures. 
Let $F: \Sigma_{1,2} \rightarrow \Sigma_{1,2}$ be the $\pi$-rotation as above. 
Then  $\iota(\mathfrak{b}, \bm{x}) = (\mathfrak{b}, \bm{x})$, $F(\mathfrak{b}, \bm{x}) = (\mathfrak{b},J \bm{x})$ 
and $F \circ \iota (\mathfrak{b}, \bm{x}) = (\mathfrak{b},J \bm{x})$. 
Consider any  orientation-preserving diffeomorphism $h: \Sigma_{1,2} \rightarrow \Sigma_{1,2} $ preserving the train track $\mathfrak{b}$. 
Since large branches are mapped to large branches under $h$, 
we observe that 
$h$ is either the identity map $1$, $\iota$, $F$ or  $F \circ \iota= \iota \circ F$. 
This completes the proof. 
\medskip

We turn to case 3. 
For $\bm{p} \in \mathcal{I}_n$ 
let $\bm{v}_{\bm{p}} $ be the normalized eigenvector of $M_{\bm{p}}$ given in Theorem~\ref{thm_expansion}. 
If case 3 occurs, 
we have $s_{\bm{p}} + s_{\bm{t}}  \ne 1$ by Corollary~\ref{cor_same-dilatation}(2) 
and $s_{\bm{p}}  \ne  s_{\bm{t}} $ by Corollary~\ref{cor_same-dilatation}(3). 
In particular, 
$\bm{v}_{\bm{p}} \ne J \bm{v}_{\bm{t}}$ and $\bm{v}_{\bm{p}} \ne \bm{v}_{\bm{t}}$. 
But since by Claim 1, the only possible diffeomorphisms are $1$, $\iota$, $F$ or  $F \circ \iota= \iota \circ F$, a diffeomorphism $h: \Sigma_ {1,2} \rightarrow \Sigma_ {1,2}$ with 
$h(\mathfrak{b}, \bm{v}_{\bm{p}}) =(\mathfrak{b}, c \bm{v}_{\bm{t}}) $ for some constant $c>0$ cannot exist. The periodic splitting sequences of $ \Phi_{\bm{p}}$ and $\Phi_{\bm{t}}$ are not combinatorially isomorphic because they do not satisfy the condition (2) in Definition~\ref{definition_ combinatorially-isomorphic}.  
Therefore, 
$ \Phi_{\bm{p}}$ and $\Phi_{\bm{t}}$ are not conjugate to each other by Theorem~\ref{thm_Hodgson-Issa-Segerman}. 
This contradicts the assumption that $ \Phi_{\bm{p}}$ and $\Phi_{\bm{t}}$ are conjugate. 
Thus case 3 does not occur and the condition (2) implies the condition (1).

To see that (3) implies (1),
 suppose that $\phi_{\bm{p}}$ and $\phi_{\bm{t}}$ are conjugate in $\mathrm{MCG}(\Sigma_{0,5})$. 
For the $2$-fold branched cover $\Sigma_{1,2} \rightarrow \Sigma_{0,5}$ 
 their lifts $\Phi_{\bm{p}}$ and $\Phi_{\bm{t}}$ are conjugate in $\mathrm{MCG}(\Sigma_{1,2})$. 
 Then  $\bm{p} \sim \bm{t}$ by the above argument.  
 This completes the proof. 
  \end{proof}

\section*{Acknowledgement}
The first author would like to thank the organizers of the FrontierLab program at Osaka University. The program made it possible for the author to conduct research in Japan. Without it, the paper could not have been written.
The second author's research was  supported by JSPS KAKENHI Grant Numbers JP21K03247, JP22H01125, JP23H0101.

\bibliographystyle{hamsplain}
\bibliography{agol_cycle}
\end{document}